\documentclass[12pt]{extarticle}
\usepackage[utf8]{inputenc}
\usepackage[T1]{fontenc}

\usepackage{caption}
\tolerance 10000
\headheight 0in
\headsep 0in
\evensidemargin 0in
\oddsidemargin \evensidemargin
\textwidth 6.5in
\topmargin .25in
\textheight 8.8in
\synctex=1

\usepackage{amssymb}
\usepackage{pmboxdraw}
\usepackage{amsmath}
\usepackage{bbm}
\usepackage{amsthm}
\usepackage{amssymb}
\usepackage{dsfont}
\usepackage[usenames,dvipsnames]{xcolor}
\usepackage{graphicx}
\usepackage{alphabeta}
\usepackage{tikz,pgfplots}
\pgfplotsset{compat=1.18}
\usepackage{pgfplots}
\usepackage[margin=0.5cm]{caption}
\usepackage{hyperref}
\usepackage{subcaption}
\usepackage[all]{xy}
\usepackage[greek,english]{babel}
\usepackage[export]{adjustbox}
\usepackage{hyperref}
\hypersetup{
    colorlinks=true,
    linkcolor=orange!80!black,
    filecolor=magenta,      
    urlcolor=RoyalBlue,
    citecolor=RoyalBlue,
}
\usepackage{enumitem}

\definecolor{myblue}{rgb}{0.00000,0.44700,0.74100}
\definecolor{myorange}{rgb}{0.8500, 0.3250, 0.0980}
\definecolor{myyellow}{rgb}{0.9290, 0.6940, 0.1250}
\definecolor{mypurple}{rgb}{0.4940, 0.1840, 0.5560}
\definecolor{mygreen}{rgb}{0.4660, 0.6740, 0.1880}

\usepackage{listings}

\lstset
  {
  basicstyle = \ttfamily,
  keywordstyle = \color{myblue},
  keywordstyle=[2]\color{myorange},
  morekeywords={abstract,break,case,catch,const,continue,do,else,elseif,end,export,false,for,function,immutable,import,importall,if,in,macro,module,otherwise,quote,return,switch,true,try,type,typealias,using,while},
  morekeywords=[2]{convex_hull,rand,size,normalized_volume,@var,prod,System,@time,monodromy_solve,length,solve,solutions,parameters,norm,maximum},
  morekeywords=[2]{@var,@time},
  commentstyle=\color{ForestGreen},
  morecomment=[l]{##},
  frame=lines,
  showstringspaces = false,
}

\newtheorem{theorem}{Theorem}[section]
\newtheorem{theorem*}[theorem]{Theorem*}

\newtheorem{lemma}[theorem]{Lemma}
\newtheorem{corollary}[theorem]{Corollary}
\newtheorem{proposition}[theorem]{Proposition}

\newtheorem{conjecture}[theorem]{Conjecture}

\theoremstyle{definition}

\newenvironment{example}
{\pushQED{\qed}\examplex}
{\popQED\endexamplex}

\newtheorem{remark}[theorem]{Remark}

\theoremstyle{remark}

\newcommand{\Xcos}{{\cal X}_{A,\cos{}}}

\title{\bf Chebyshev Varieties}
\author{Za\"ineb Bel-Afia, Chiara Meroni and Simon Telen}
\date{}

\begin{document}

\maketitle

\begin{abstract}
\noindent Chebyshev varieties are algebraic varieties parametrized by Chebyshev polynomials or their multivariate generalizations. We determine the dimension, degree, singular locus and defining equations of these varieties. We explain how they play the role of toric varieties in sparse polynomial root finding, when monomials are replaced by Chebyshev polynomials. We present numerical root finding algorithms that exploit our results.
\end{abstract}

\section{Introduction}
The Chebyshev polynomials of the first kind are classically defined by the recurrence relation 
\begin{equation} \label{eq:chebpol1}
T_0(t) \, = \, 1, \quad T_1(t) \, = \, t, \quad \text{and} \quad T_{k+1}(t) \, = \, 2t \cdot T_{k}(t) - T_{k-1}(t). 
\end{equation}
They are often used in applied and numerical mathematics. For instance, Chebyshev expansions and interpolants are essential tools in function approximation \cite{trefethen2019approximation}. In short, one approximates a sufficiently well-behaved function $\phi: [a,b] \rightarrow \mathbb{R}$ by a Chebyshev~polynomial 
\begin{equation} \label{eq:funivcheb}
f(t) \, = \, c_0 + c_1 \, T_1(t) + \cdots + c_d \, T_d(t), 
\end{equation}
such that $\lVert f - \phi \rVert \leq \varepsilon$. For details, see \cite[Chapter 8]{trefethen2019approximation}. Finding the roots of $\phi$, i.e., the values $t \in [a,b]$ such that $\phi(t) = 0$, is replaced by the polynomial root finding problem $f(t) = 0$. 
This is the topic of \cite{boyd2013finding,day2005roots,noferini2017chebyshev}. Important for our story is the following fact, stated in more detail in the Introduction of \cite{noferini2017chebyshev}. One could expand $f(t)$ into the standard monomial basis
\[ f(t) \, = \, c_0 + c_1 \, T_1(t) + \cdots + c_n \, T_n(t) \, = \, e_0 + e_1 \, t + \cdots + e_n \, t^n \, = \, 0\]
and find the roots in $[a,b]$ from the new coefficients $e_0, e_1, \ldots, e_n$. For instance, these roots are among  the eigenvalues of the Frobenius companion matrix. It turns out this is a bad idea. Apart from the fact that the basis conversion $(c_i)_i \rightarrow (e_i)_i$ is ill-conditioned, the real roots of $f$ are much more sensitive to changes in the $e_i$ than to changes in the $c_i$. In numerical analysis terms, the real root finding problem in the Chebyshev basis is better conditioned. 

This role of Chebyshev polynomials in univariate root finding is our point of entry. We are concerned with the multivariate analog of solving $f(t) = 0$. We consider $m$ nonzero polynomials in $m$ variables $f_1, \ldots, f_m \in \mathbb{R}[t_1,\ldots,t_m]$. We seek to compute their common zeros in a bounded box $\square  \subset \mathbb{R}^m$, which we will take to be $[-1,1]^m$ without loss of generality:
\[ V_\square(f_1,\ldots,f_m) \, = \, \{ t \in \square=[-1,1]^m \, : \, f_1(t) \, = \, \cdots \, = \, f_m(t) \, = \, 0 \}. \]
While the univariate case $m = 1$ is straightforwardly turned into a linear algebra problem via the companion matrix, the case $m > 1$ is generally considered a problem of computational algebraic geometry \cite{cox2013ideals} or nonlinear algebra \cite{michalek2021invitation}. There, $\square$ is usually replaced by $\mathbb{C}^m$. Symbolic methods from computer algebra mostly use Gr\"obner bases \cite[Section 2]{cox2013ideals}. In numerical algebraic geometry, a more recent discipline which uses numerical methods to study algebraic varieties, the main methods are subdivision methods \cite{mourrain2009subdivision,parkinson2024chebyshev}, homotopy continuation \cite{sommese2005numerical,timme2021numerical} and numerical normal forms or resultants  \cite{dreesen2012back,nakatsukasa2015computing,sorber2014numerical,telen2020solving}. 

In analogy with \eqref{eq:funivcheb}, our polynomials $f_i(t)$ are assumed to be given in the following form: 
\begin{equation} \label{eq:fi}
 f_i(t) \, = c_{i,0} \, + \, c_{i,a_1} \, {\cal T}_{a_1}(t) \,  + \,  c_{i,a_2} \, {\cal T}_{a_2}(t) \, + \,  \cdots \, + \,  c_{i,a_n} \, {\cal T}_{a_n}(t),
\end{equation}
where $c_{i,0}, c_{i,a_j}$ are real numbers, the index $a_j = (a_{1j}, \ldots, a_{mj}) \in \mathbb{N}^m$ is an $m$-tuple of nonnegative integers,  and ${\cal T}_{a_j}$ are functions of $m$ variables which generalize the Chebyshev polynomials~\eqref{eq:chebpol1}. We discuss different choices for ${\cal T}_{a_j}$ below. This article studies the problem of computing $V_{\mathbb{C}^m}(f_1,\ldots,f_m) \supset V_\square(f_1,\ldots,f_m)$ from an algebro-geometric point of view. 

This leads us to define the objects in our title. For any fixed choice of functions ${\cal T}_{a_j}$, an integer matrix $A = [a_1 ~ \cdots ~ a_n] \in \mathbb{N}^{m \times n}$ defines a parametrization ${\cal T}_A : \mathbb{C}^m \rightarrow \mathbb{C}^n$ given by 
\begin{equation} \label{eq:phiA}
{\cal T}_A(t) \, = \, ( \, {\cal T}_{a_1}(t), \, {\cal T}_{a_2}(t), \, \ldots, {\cal T}_{a_n}(t) \, ). 
\end{equation}
When ${\cal T}_{a_j}$ is defined in one of the ways listed below, the Zariski closure of the image of ${\cal T}_A$ is called a \emph{Chebyshev variety}. We denote this variety by ${\cal X}_A = \overline{{\rm im} \, {\cal T}_A} \subset \mathbb{C}^n$. We observe that computing $V_{\mathbb{C}^m}(f_1,\ldots,f_m)$ with $f_i$ as in \eqref{eq:fi} can be reformulated as $m$ linear equations~on~${\cal X}_A$:
\begin{equation} \label{eq:lineqonX}
 c_0 + C \cdot x  = 0, \quad \text{and} \quad x \in {\cal X}_A,
 \end{equation}
where $C \in \mathbb{R}^{m \times n}$ is an $m \times n$ matrix with entries $C_{ij} = c_{i,a_j}$, $x = (x_1, \ldots, x_n)$ are coordinates on $\mathbb{C}^n$, and $c_0$ is the column vector $(c_{1,0}, \ldots, c_{m,0})^\top$. For computing $V_\square(f_1 ,\ldots, f_m)$, we replace ${\cal X}_A$ by the semialgebraic subset ${\cal X}_{A,\square} = {\cal T}_A(\square) \subset \mathcal{X}_A$ in \eqref{eq:lineqonX}. The above observation implies that geometric properties of ${\cal X}_A$, such as its dimension, degree and singular locus, govern the geometry and complexity of solving our equations. The goal in this paper is to describe these properties. When ${\cal T}_{a_j}(t) = t_1^{a_{1j}}t_2^{a_{2j}}\cdots t_m^{a_{mj}}$ are monomials instead, $\overline{{\rm im} \, {\cal T}_A}$ is an affine toric variety \cite{cox2011toric,telen2022introduction}. Our study of Chebyshev varieties for Chebyshev root finding mimics that of toric varieties for sparse root finding, see \cite[Example 1.1]{telen2022introduction}. We recall the toric approach in Section \ref{sec:2}, which will also clarify the relation between \eqref{eq:lineqonX} and $f_1 = \cdots = f_m = 0$. 

We now discuss our choices for the parametrizing functions ${\cal T}_{a_j}$ in \eqref{eq:phiA}. In the univariate case $(m = 1)$, ${\cal T}_{a_j} = T_{a_j}$ is the Chebyshev polynomial of degree $a_j$ or ${\cal T}_{a_j} = U_{a_j}$ is the $a_j^{th}$ Chebyshev polynomial of the second kind. This is the setting of Section \ref{sec:3}. For $m > 1$ we consider two different generalizations. Section \ref{sec:4} sets ${\cal T}_{a_j}(t_1) = T_{a_{1j}}(t)  \, T_{a_{2j}}(t_2) \, \cdots \,  T_{a_{mj}}(t_m)$. I.e., we use elements of the tensor product basis of $\mathbb{R}[t_1,\ldots,t_m] = \bigotimes_{i=1}^m \mathbb{R}[t_i]$ induced by the Chebyshev basis on each factor. Finally, in Section \ref{sec:5}, we set ${\cal T}_{a_j}(t) = \cos(a_j \cdot u)$, where $u = (u_1, \ldots, u_m)$ satisfies $\cos(u_i) = t_i$. Here $a_j \cdot u$ is the usual inner product. This generalizes the well-known property $T_k(t) = \cos(k \, {\rm acos}(t))$ for $t \in [-1,1]$. Figure \ref{fig:threesurfaces} shows that these different choices of parametrizations lead to varieties with different geometric properties. For a fixed $A \in \mathbb{N}^{2 \times 3}$, it plots some real points of the toric variety ${\cal Y}_A$, the tensor product Chebyshev variety ${\cal X}_{A,\otimes}$, and the Chebyshev variety ${\cal X}_{A,\cos{}}$ obtained from cosines.
In Section \ref{sec:6-4} we briefly discuss generalized constructions from root systems, see for instance \cite{hubert2022sparse,ryland2010multivariate}. 

 \begin{figure}[ht]
\begin{subfigure}{.32\textwidth}
  \centering
  \includegraphics[height = 4cm]{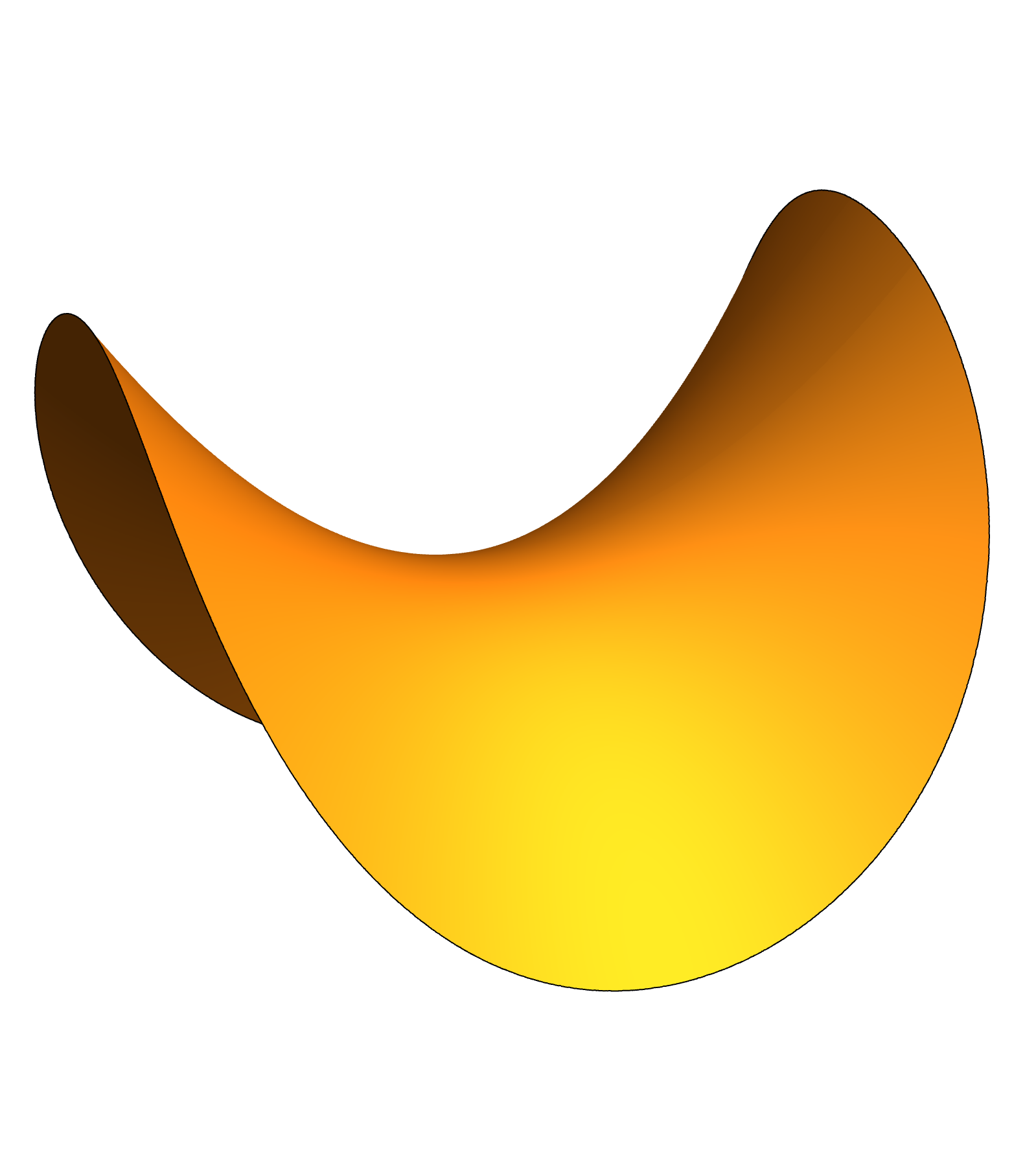} 
  \caption{toric variety ${\cal Y}_A$}
  \label{fig:sub-first}
\end{subfigure}
\begin{subfigure}{.32\textwidth}
  \centering
  \includegraphics[height = 4cm]{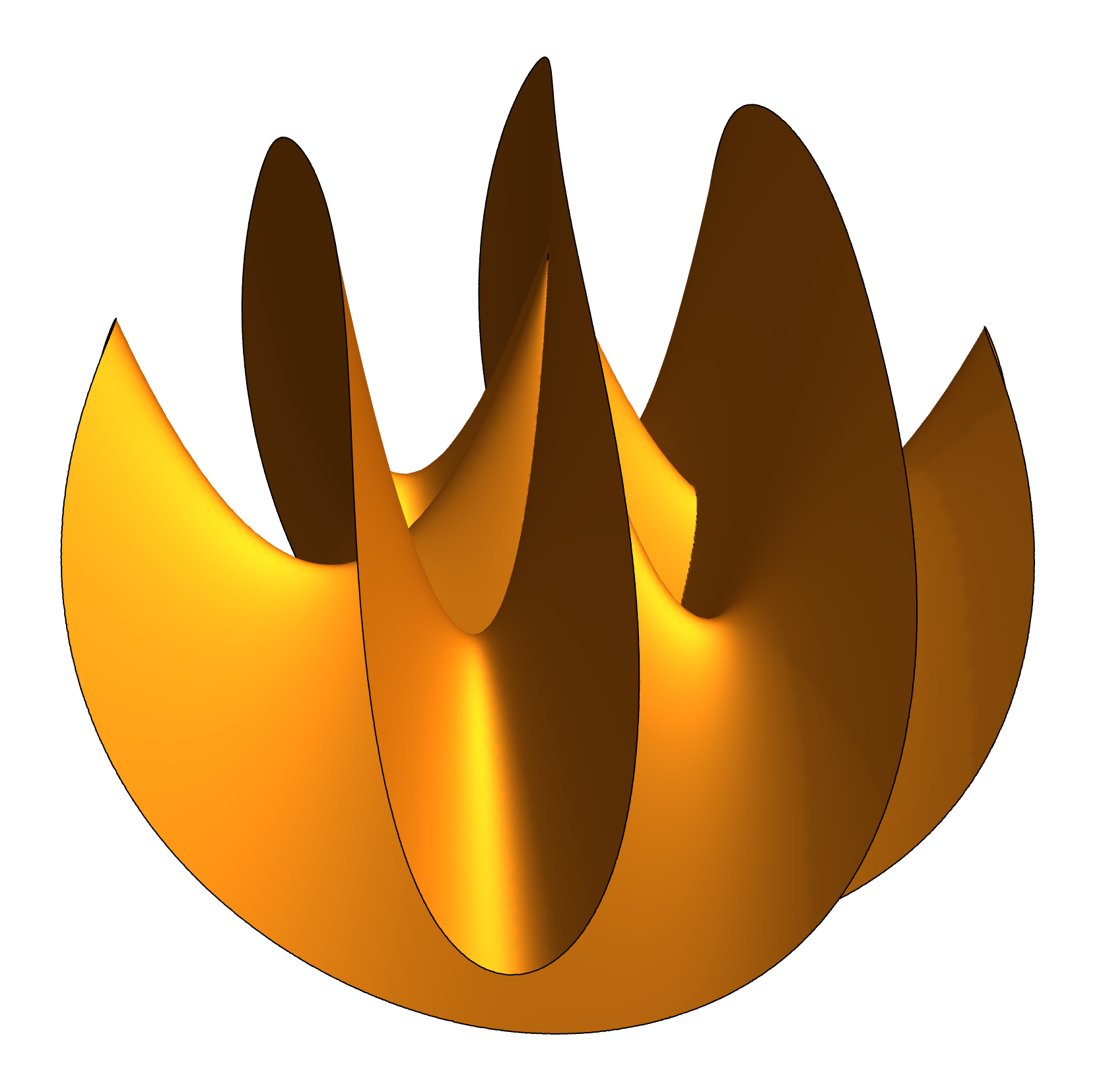}  
  \caption{Chebyshev variety ${\cal X}_{A,\otimes}$}
  \label{fig:sub-second}
\end{subfigure}
\begin{subfigure}{.32\textwidth}
  \centering
  \includegraphics[height = 4cm]{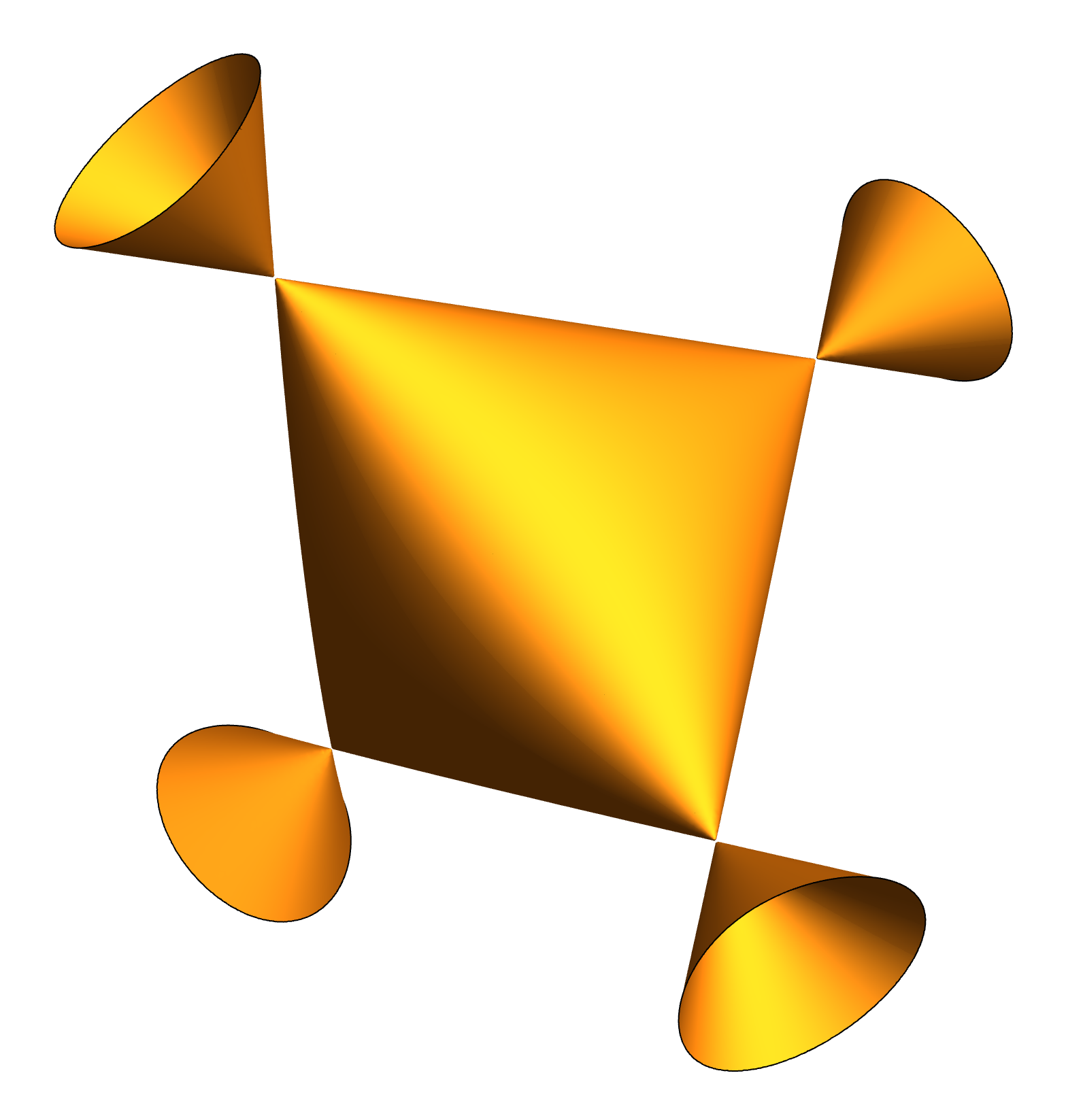}  
  \caption{Chebyshev variety ${\cal X}_{A,\cos{}}$}
  \label{fig:sub-third}
\end{subfigure}
\caption{Three surfaces obtained from the matrix $A = \left [ \begin{smallmatrix}
    1 & 1 & 2 \\ 2 & 1 & 3
\end{smallmatrix} \right ]$.}
\label{fig:threesurfaces}
\end{figure}

\vspace{-0.2cm}
\paragraph{Related work.} While Chebyshev representations of $f$ are often considered when $m = 1$, their use in computational algebraic geometry $(m > 1)$ has remained limited. Exceptions include \cite{nakatsukasa2015computing,sorber2014numerical}, which use B\'ezout/Sylvester resultants in the Chebyshev basis for $m = 2$. The recent articles~\cite{mourrain2021truncated,parkinson2024chebyshev,parkinson2022analysis} consider general $m$. In particular, the experiments in \cite[Section 6.7]{mourrain2021truncated} show that more accurate results are obtained using Chebyshev representations, compared with monomials. Chebyshev curves, i.e., one-dimensional Chebyshev varieties, were studied in \cite{freudenburg2009curves}. To the best of our knowledge, more general varieties parametrized by Chebyshev polynomials and their multivariate analogs have not been studied before.

\vspace{-0.2cm}
\paragraph{Contributions and outline.} This paper contains several results related to dimension, degree, singular locus and defining equations of Chebyshev varieties. Section \ref{sec:2} explains the toric/monomial setting as a motivation. It makes the analogy between toric varieties and Chebyshev varieties explicit via sparse systems of equations. Section \ref{sec:3} is about curves $(m = 1)$ parametrized by Chebyshev polynomials of the first kind ($T$-curves) or by Chebyshev polynomials of the second kind ($U$-curves). We show that the plane $T$-curve associated to the matrix $A = [a ~ a\!+\!1]$ is hyperbolic with respect to the origin for any $a$ (Theorem \ref{thm:hyperT}), and so is the corresponding $U$-curve (Theorem \ref{thm:hyperU}). We derive generators for the ideal of a $T$-curve, under some assumptions on $A$ (Theorem \ref{thm:maincurves}). In Section \ref{sec:4}, we investigate the dimension and degree of Chebyshev varieties parametrized by tensor product Chebyshev polynomials (Proposition \ref{prop:degtensor} and Theorems \ref{thm:dimtensor}, \ref{thm:betterbound}). 
In this section, the polynomials $f_i$ in \eqref{eq:fi} could be multivariate Chebyshev interpolants approximating transcendental functions $\phi_i$, as in \cite{trefethen2017cubature}. In Section \ref{sec:5}, we discuss dimension (Theorem \ref{thm:cosineChebDim}), degree (Theorem \ref{thm:degCos}), singular locus (Proposition \ref{prop:singCos}) and defining equations (Theorem \ref{thm:cosineChebDim}) for Chebyshev varieties parametrized by cosines of linear forms. 
Finally, in Section \ref{sec:6}, we discuss applications of our results and future research directions related to computing real roots and estimating their number. We also introduce Chebyshev varieties from root systems \cite{hubert2022sparse}. 
This paper aims to bring Chebyshev polynomials into the field of numerical/computational algebraic geometry. First algorithmic/experimental steps were taken in \cite{mourrain2021truncated,parkinson2024chebyshev,parkinson2022analysis}. Our hope is that Chebyshev varieties, like toric varieties, appeal to both theoretically and computationally inclined readers, and will lead to powerful numerical algorithms for solving equations.

\section{Toric varieties and sparse root finding} \label{sec:2}
For an algebraic geometer, solving sparse polynomial equations means intersecting toric varieties with linear spaces. Here \emph{sparse} means that only a prescribed set of monomials appears in each equation. This section aims to explain/justify this geometric interpretation, and to show how toric varieties are replaced by Chebyshev varieties when monomials are replaced by Chebyshev polynomials. Our prescribed monomials are encoded by $A \in \mathbb{N}^{m \times n}$: 
\begin{equation} \label{eq:fitoric}
 f_i(t) \, = c_{i,0} \, + \, c_{i,a_1} \, t^{a_1} \,  + \,  c_{i,a_2} \, t^{a_2} \, + \,  \cdots \, + \,  c_{i,a_n} \, t^{a_n}, \quad i = 1, \ldots, m.
\end{equation}
This uses the standard multi-index notation $t^{a_j} = t_1^{a_{1j}}t_2^{a_{2j}}\cdots t_m^{a_{mj}}$, where $a_j$ is the $j^{th}$ column of $A$. We aim to solve $f_1(t) = \cdots = f_m(t) = 0$, i.e., to compute $V_{\mathbb{C}^m}(f_1, \ldots, f_m)$. 

The \emph{affine toric variety} ${\cal Y}_A \subset \mathbb{C}^n$ associated to the matrix $A$ is parametrized by our monomials. I.e., it is the Zariski closure in $\mathbb{C}^n$ of the image of the following map: 
\[ \Phi_A \, : \, \mathbb{C}^m \longrightarrow \mathbb{C}^n, \quad  t \, \longmapsto \, (t^{a_1}, \ldots, t^{a_n}). \]
One checks that, when ${\rm rank}(A) = m$, the map $\Phi_A$ is generically $(\deg \Phi_A)$-to-one, meaning that almost all points in the image have $\deg \Phi_A$ pre-images. The number $\deg \Phi_A = [\mathbb{Z}^m:\mathbb{Z}A]$ is the index of the sublattice $\mathbb{Z} A \subset \mathbb{Z}^m$ generated by the columns of the matrix $A$.

Many geometric properties of ${\cal Y}_A$ are encoded by a convex polytope related to $A$. We take the convex hull of the columns of $A$, viewed as points in $\mathbb{R}^m$, and the origin in $\mathbb{R}^m$. The resulting polytope is denoted by $P_A = {\rm conv}(A \cup 0) \subset \mathbb{R}^m$.\footnote{For the expert reader, we point out that this is slightly different from the usual setup in toric geometry \cite{cox2011toric,telen2022introduction}. One typically allows negative entries in the matrix $A$, but we prefer to use polynomials. Appending the origin $0$ to the matrix $A$ corresponds to including constant terms $c_{i,0}$ in \eqref{eq:fitoric}. We are implicitly viewing ${\cal Y}_A$ as an affine chart of the projective toric variety associated to $P_A$. } The following theorem states how to read off the dimension, degree and defining equations of ${\cal Y}_A$ from $A$ and $P_A$.  
 
\begin{theorem} \label{thm:maintoric}
    The dimension of the affine toric variety ${\cal Y}_A$ equals the rank of the matrix $A$, which equals the dimension of the polytope $P_A = {\rm conv}(A \cup 0)$. If ${\rm rank}(A) = m$, we have 
    \begin{equation} \label{eq:degtoric}
    \deg {\cal Y}_A \, = \, \frac{m!}{\deg \Phi_A} \cdot {\rm vol}(P_A).
    \end{equation}
    The ideal of ${\cal Y}_A$ is generated by the binomials $x^u - x^v$, for $u, v \in \mathbb{N}^n$ and $u-v \in \ker_{\mathbb{Z}} A$. 
\end{theorem}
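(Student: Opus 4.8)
The plan is to prove the three assertions of Theorem~\ref{thm:maintoric} in the order stated, since the dimension computation feeds naturally into the degree formula, and the ideal-theoretic claim can be handled somewhat independently.

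For the dimension, I would argue that $\dim {\cal Y}_A$ equals the dimension of the generic fiber of the parametrization. Since ${\cal Y}_A = \overline{{\rm im}\,\Phi_A}$ and $\Phi_A$ is a morphism from $\mathbb{C}^m$, the dimension of the image closure equals $m$ minus the dimension of a generic fiber of $\Phi_A$ (by the fiber-dimension theorem for dominant maps onto their image). The cleanest route is to restrict to the torus $(\mathbb{C}^*)^m$, where $\Phi_A$ factors through the group homomorphism $t \mapsto (t^{a_1}, \ldots, t^{a_n})$ whose image is the subtorus with character lattice $\mathbb{Z}A$. The rank of this homomorphism on cotangent spaces, computed via the log-derivative $t_k \partial x_j / \partial t_k = a_{kj} x_j$, is exactly ${\rm rank}(A)$, so the generic fiber has dimension $m - {\rm rank}(A)$ and hence $\dim {\cal Y}_A = {\rm rank}(A)$. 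The equality ${\rm rank}(A) = \dim P_A$ is elementary: appending the origin to the columns of $A$ gives a point configuration whose affine hull through $0$ is the linear span of the columns, so $\dim P_A = \dim {\rm span}(a_1, \ldots, a_n) = {\rm rank}(A)$.

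For the degree formula \eqref{eq:degtoric}, assuming now ${\rm rank}(A) = m$ so that $\Phi_A$ is generically finite, I would use the standard fact that the degree of an $m$-dimensional affine variety equals the number of intersection points with a generic affine-linear space of complementary dimension, counted with multiplicity. Pulling back $m$ generic affine hyperplane sections $c_{i,0} + \sum_j c_{i,a_j} x_j = 0$ under $\Phi_A$ yields precisely a generic sparse system \eqref{eq:fitoric} with support $A \cup \{0\}$. By Kushnirenko's theorem, the number of solutions of such a system in $(\mathbb{C}^*)^m$ is $m! \cdot {\rm vol}(P_A)$, the normalized volume of the Newton polytope. Since $\Phi_A$ is generically $(\deg\Phi_A)$-to-one onto ${\cal Y}_A$, each point of the linear section on ${\cal Y}_A$ is hit by exactly $\deg\Phi_A$ solutions of the pulled-back system, so dividing gives $\deg {\cal Y}_A = m! \cdot {\rm vol}(P_A)/\deg\Phi_A$. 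The care needed here is to confirm that the generic linear section meets ${\cal Y}_A$ only in the torus part (away from the coordinate hyperplanes and any boundary strata where the fiber count could differ), which holds for generic coefficients.

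For the ideal, let $I_A$ denote the ideal generated by the binomials $x^u - x^v$ with $u - v \in \ker_{\mathbb{Z}} A$; I would show $I(\mathcal{Y}_A) = I_A$ by a double inclusion. The inclusion $I_A \subseteq I(\mathcal{Y}_A)$ is immediate, since substituting $x_j = t^{a_j}$ into $x^u - x^v$ gives $t^{Au} - t^{Av} = 0$ exactly when $Au = Av$, i.e.\ $u - v \in \ker_{\mathbb{Z}} A$. The reverse inclusion is the substantive part, and I expect it to be \textbf{the main obstacle}. The standard approach is to pass to the torus and use that $I_A$ is already prime (a lattice ideal associated to a saturated sublattice, once one notes $\ker_{\mathbb{Z}} A$ is saturated in $\mathbb{Z}^n$) with the correct dimension $m = {\rm rank}(A)$, matching $\dim \mathcal{Y}_A$; since $I_A \subseteq I(\mathcal{Y}_A)$ and $I(\mathcal{Y}_A)$ is prime of the same dimension, they must be equal. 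Making this rigorous requires the saturation/primality of the lattice ideal and a Nullstellensatz-style argument that no binomial relation is missed; I would cite the standard toric-geometry references \cite{cox2011toric,telen2022introduction} for the lattice-ideal machinery rather than reproving it, and focus the exposition on verifying the hypotheses (rank condition and saturation of $\ker_{\mathbb{Z}} A$) in our affine, origin-appended setup.
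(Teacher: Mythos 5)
The paper does not prove Theorem~\ref{thm:maintoric} at all: it is stated as a summary of standard toric-geometry facts with a pointer to \cite[Corollary 2.14 and Theorems 2.19, 3.16]{telen2022introduction}, and your sketch (log-Jacobian rank for the dimension, Kushnirenko plus the generic $(\deg\Phi_A)$-to-one fiber count for \eqref{eq:degtoric}, and primality of the lattice ideal of the saturated lattice $\ker_{\mathbb{Z}}A$ for the binomial generators) is exactly the standard argument behind those cited results. Your proof is correct and consistent with the route the paper implicitly endorses, so there is nothing further to reconcile.
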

This theorem summarizes well-known results in toric geometry, see for instance \cite[Corollary 2.14 and Theorems 2.19, 3.16]{telen2022introduction}. It uses the notation $\ker_{\mathbb{Z}}(A)$ for all vectors in the kernel of $A$ with integer entries. Readers who are unfamiliar with the concept of degree of an algebraic variety can use the following definition of $\deg {\cal Y}_A$: a generic affine-linear space of dimension $n-m$ intersects ${\cal Y}_A$ in $(\deg {\cal Y}_A)$-many points. We also recall that the (vanishing) ideal of ${\cal Y}_A$ is the ideal of $\mathbb{C}[x_1, \ldots, x_n]$ containing all polynomials vanishing on ${\cal Y}_A$.

\begin{example}
    The $1 \times 2$ matrix $A = [5~7]$ defines a toric curve ${\cal Y}_A \subset \mathbb{C}^2$ parametrized by $\Phi_A(t) = (t^5,t^7)$. 
    The map $\Phi_A$ is one-to-one, which agrees with $\mathbb{Z}A = \mathbb{Z}$. The polytope $P_A$ is the line segment $[0,7]$, with volume 7. This is the degree of our curve. We have ${\cal Y}_A = \{(x,y) \in \mathbb{C}^2\,:\, x^7-y^5 = 0\}$. This binomial equation corresponds to $[7~0]^\top - [0~5]^\top\in \ker_{\mathbb{Z}} A$.  
    Notice that the matrix $A' = [10~14]$ parametrizes the same curve ${\cal Y}_{A'} = {\cal Y}_A$, but $\Phi_{A'}$ is 2-to-1. The subgroup $\mathbb{Z}A \subset \mathbb{Z}$ is $2 \mathbb{Z}$. The number in \eqref{eq:degtoric} is still 7 after replacing $A$ by $A'$. 
\end{example}

\begin{example} \label{ex:pringles1}
    We consider the toric surface ${\cal Y}_A  \subset \mathbb{C}^3$ coming from the matrix 
    \[ A \, = \, \begin{bmatrix}
        1 & 1 & 2 \\ 2 & 1 & 3
    \end{bmatrix}.\]
    Since ${\rm rank}(A) = 2$, this is indeed a surface, see Figure \ref{fig:sub-first}. The polytope $P_A$ is the quadrilateral in the left part of Figure \ref{fig:degpringles}. The map is $\Phi_A(t_1,t_2) = (t_1t_2^2, t_1 t_2, t_1^2t_2^3)$. The Zariski closure of its image is ${\cal Y}_A = \{(x,y,z) \in \mathbb{C}^3 \, : \, xy-z = 0 \}$. Notice that $(1,0,0) \in {\cal Y}_A$, but it is not in ${\rm im} \, \Phi_A$: taking the closure is necessary to obtain a variety. The degree of the equation $xy-z$ is 2, which is $2!$ times the volume of $P_A$, as predicted by Theorem \ref{thm:maintoric}. 
    \begin{figure}[ht]
  \centering
  \begin{tikzpicture}[scale=0.83]
\begin{axis}[%
width=1.2in,
height=1.8in,
scale only axis,
xmin=-0.5,
xmax=2.5,
ymin=-0.5,
ymax=3.5,
ticks = none, 
ticks = none,
axis background/.style={fill=white},
axis line style={draw=none} 
]

\draw[->,>=stealth] (axis cs:0,0) -- (axis cs:2.5,0);
\draw[->,>=stealth] (axis cs:0,0) -- (axis cs:0,3.5);

\addplot[only marks,mark=*,mark size=1.1pt,black
        ]  coordinates {
   (0,0) (1,0) (2,0) (0,1) (1,1) (2,1) (0,2) (1,2) (2,2) (0,3) (1,3) (2,3)
};

\addplot [color=myblue,solid,fill opacity=0.2,fill = myblue,forget plot]
  table[row sep=crcr]{%
 0 0\\
1 2\\	
2 3\\
1 1 \\
0 0\\
};

\addplot [very thick, color=myblue,solid,fill opacity=0.2,fill = myblue,forget plot]
  table[row sep=crcr]{%
 0 0 \\
 1 2\\
};

\addplot [very thick, color=myblue,solid,fill opacity=0.2,fill = myblue,forget plot]
  table[row sep=crcr]{%
 1 2 \\
 2 3 \\
};

\addplot [very thick, color=myblue,solid,fill opacity=0.2,fill = myblue,forget plot]
  table[row sep=crcr]{%
 2 3\\
 1 1 \\
};

\addplot [very thick, color=myblue,solid,fill opacity=0.2,fill = myblue,forget plot]
  table[row sep=crcr]{%
 0 0\\
 1 1 \\
};

\addplot[only marks,mark=*,mark size=3.1pt,myblue
        ]  coordinates {
  (0,0) (1,2) (1,1) (2,3)
};

\end{axis}
\end{tikzpicture}  
  \quad \qquad \quad 
  \includegraphics[height = 1.5in]{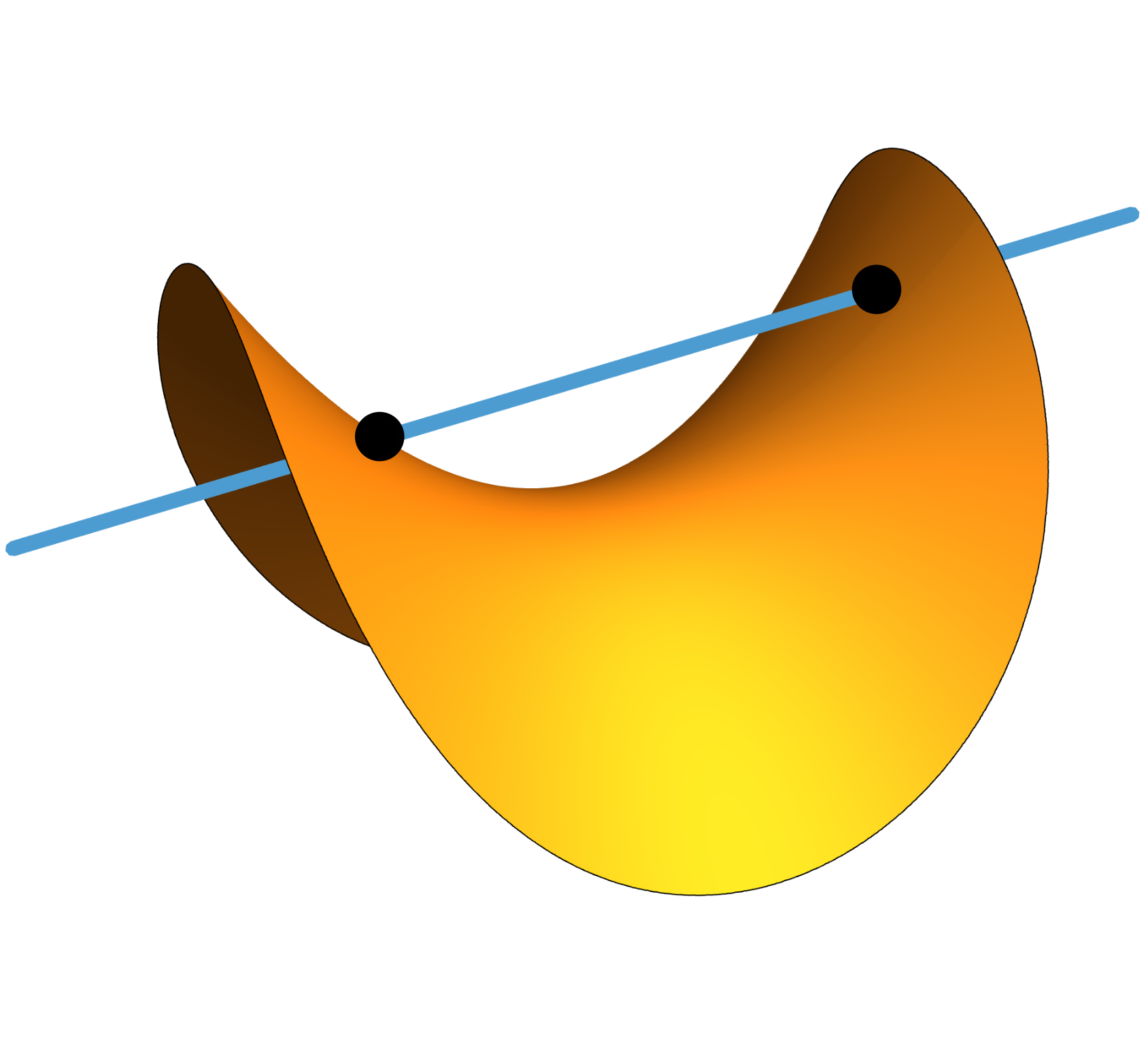}  
\caption{The number of intersection points of ${\cal Y}_A$ is $2!$ times the volume of $P_A$.}
\label{fig:degpringles}
\end{figure}
\end{example}

Let $c_0 \in \mathbb{R}^m$ and $C \in \mathbb{R}^{m \times n}$ be as in the introduction. The relevance of Theorem \ref{thm:maintoric} for the problem of solving $f_1 = \cdots = f_m = 0$, with $f_i$ from \eqref{eq:fitoric}, is seen from the easy implication
\[ t \, \in \,  V_{\mathbb{C}}(f_1, \ldots, f_m) \, \, \Longrightarrow \, \, c_0 + C \cdot \Phi_A(t) \, = \, 0. \]
Conversely, if $x \in {\rm im} \, \Phi_A \subset \mathbb{C}^n$ and $c_0 + C \cdot x = 0$, then any preimage $t \in \Phi_A^{-1}(x)$ lies in $V_{\mathbb{C}}(f_1, \ldots, f_m)$. It is customary to relax the condition $x \in {\rm im} \, \Phi_A$ slightly by allowing points in the closure: $x \in {\cal Y}_A$. This breaks up our problem into two steps: 
\begin{enumerate}
    \item[(1)] Compute all intersections of ${\cal Y}_A$ with the affine-linear space $\{ x \in \mathbb{C}^n \, : \, c_0 + C \cdot x = 0 \}$.
    \item[(2)] Compute $\Phi_A^{-1}(x)$ for each point $x$ in the above intersection which lies in ${\rm im} \, \Phi_A$. 
\end{enumerate}
First of all, if $\dim {\cal Y}_A < m$, the intersection in step (1) is expected to be empty. That is, if the entries of $c_0, C$ are generic, the affine-linear space given by $c_0 + C \cdot x = 0$ does not intersect ${\cal Y}_A$. When $\dim {\cal Y}_A = m$, the number of points obtained in step (1) is $\deg {\cal Y}_A$. In step (2) each of these $(\deg {\cal Y}_A)$-many points gives $\deg \Phi_A$ points in the fiber $\Phi_A^{-1}(x)$. These points are easy to compute, so most of the work is in step (1). If the affine-linear space $c_0 + C \cdot x = 0$ is tangent to ${\cal Y}_A$ at some point, or it hits the singular locus ${\rm sing}({\cal Y}_A)$, we find a singular solution (i.e., a solution with multiplicity $>1$) of $f_1 = \cdots = f_m = 0$.
\begin{example} \label{ex:pringles2}
    The matrix $A$ from Example \ref{ex:pringles1} corresponds to the $m = 2$ equations 
    \begin{equation} \label{eq:pringleseq} c_{1,0} \, + \, c_{1,1} \, t_1t_2^2 \, + \, c_{1,2} \, t_1t_2 \, + \, c_{1,3} \, t_1^2t_2^3 \, = \, c_{2,0} \, + \, c_{2,1} \, t_1t_2^2 \, + \, c_{2,2} \, t_1t_2 \, + \, c_{2,3} \, t_1^2t_2^3 \, = \, 0. \end{equation}
    Step (1) is to intersect the toric surface ${\cal Y}_A$ with the line defined by $c_0 + C \cdot x = 0$, see the right part of Figure \ref{fig:degpringles}. The generic number of intersection points is $\deg {\cal Y}_A = 2$. Since $\deg \Phi_A = 1$, the two intersection points are in bijection with the two solutions of \eqref{eq:pringleseq}.
\end{example}
The above discussion justifies the statement that the affine toric variety ${\cal Y}_A$ governs the geometry of solving $f_1 = \cdots = f_m = 0$ when $f_i$ is as in \eqref{eq:fitoric} and the coefficients $c_0, C$ vary. The goal of our paper is to replace the monomials $t^{a_j}$ in \eqref{eq:fitoric} by (generalizations of) the Chebyshev polynomials $T_k(x)$, and prove results similar to Theorem \ref{thm:maintoric} in this new setting. 

\section{Chebyshev curves} \label{sec:3}

The first case to consider is $m = 1$. This corresponds to one-dimensional Chebyshev varieties, i.e., \emph{Chebyshev curves}. We start with plane Chebyshev curves in Section \ref{subsec:planecurves}, which corresponds to $n = 2$. The entries of the matrix $A \in \mathbb{N}^{1 \times 2}$ are denoted by $A = [a~b]$. We consider \emph{$T$-curves} $(T_a(t), T_b(t))$ parametrized by the Chebyshev polynomials of the first kind. Similarly, \emph{$U$-curves} $(U_a(t), U_b(t))$ are parametrized by second kind Chebyshev polynomials. In Section \ref{subsec:spacecurves}, we study Chebyshev curves embedded in ambient spaces of higher dimension. 

\subsection{Plane curves} \label{subsec:planecurves}
Let $A=[a~b] \in \mathbb{N}^{1 \times 2}$ and consider the curve parametrized by the map $\mathcal{T}_{a,b}: \mathbb{C} \rightarrow \mathbb{C}^2$, with 
\[
\mathcal{T}_{a,b}(t)\, = \, (T_a(t), T_b(t)).
\]
The Zariski closure of the image is denoted by $\mathcal{X}_{a,b,T}$. Since the $k^{th}$-Chebyshev polynomial is the unique polynomial satisfying $T_k(\cos \theta) = \cos(k \, \theta)$, $\mathcal{X}_{a,b,T}$ is alternatively parametrized by $(\cos(a \, \theta),\cos(b \, \theta))$. Its real part is an instance of a \emph{Lissajous curve} arising in oscillator physics, see \cite{greenslade1993all}. These are more generally given by $x=\alpha \, \cos(a \,  \theta)$, $y=\beta \, \cos(b \, \theta + \phi)$. Here, we stick to $\alpha=\beta=1, \phi=0$. The implicit equation of $\mathcal{X}_{a,b,T}$ is easy to find \cite{freudenburg2009curves}. 
\begin{theorem}
    The ideal of $\mathcal{X}_{a,b,T}$ is generated by the irreducible polynomial \[ T_{b'}(x) - T_{a'}(y), \quad \text{where} \quad b'=\frac{b}{\textup{gcd}(a,b)}, \, a' = \frac{a}{\textup{gcd}(a,b)}.  \]
\end{theorem}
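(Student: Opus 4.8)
The plan is to combine three facts: that $\mathcal{X}_{a,b,T}$ is an irreducible curve, that the displayed polynomial vanishes on it, and that this polynomial is irreducible. Write $d=\gcd(a,b)$, $a'=a/d$, $b'=b/d$, so that $\gcd(a',b')=1$ and $a\,b'=a'\,b=\mathrm{lcm}(a,b)$. Since $\mathcal{X}_{a,b,T}$ is the Zariski closure of the image of the irreducible variety $\mathbb{C}$ under a nonconstant morphism, it is irreducible of dimension one, and as it sits in $\mathbb{C}^2$ it has codimension one. Hence its vanishing ideal $I(\mathcal{X}_{a,b,T})\subset\mathbb{C}[x,y]$ is a height-one prime; because $\mathbb{C}[x,y]$ is a UFD, such a prime is principal, generated by a single irreducible polynomial $g$.

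First I would check that $F:=T_{b'}(x)-T_{a'}(y)$ lies in this ideal. This follows from the composition identity $T_p(T_q(t))=T_{pq}(t)$, itself immediate from $T_n(\cos\theta)=\cos(n\theta)$ together with the fact that a polynomial is determined by its values on $[-1,1]$. Evaluating on the parametrization $(x,y)=(T_a(t),T_b(t))$ gives $T_{b'}(T_a(t))=T_{ab'}(t)=T_{\mathrm{lcm}(a,b)}(t)=T_{a'b}(t)=T_{a'}(T_b(t))$, so $F$ vanishes identically on $\mathcal{X}_{a,b,T}$ and therefore $g\mid F$. Consequently it suffices to prove that $F$ is itself irreducible: then $F=\lambda g$ for a nonzero scalar $\lambda$, and $F$ generates $I(\mathcal{X}_{a,b,T})$.

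The main obstacle is the irreducibility of $F$, which is exactly where coprimality of $a'$ and $b'$ enters. I would prove it by passing to the double cover $x=\tfrac12(s+s^{-1})$, $y=\tfrac12(r+r^{-1})$, under which $T_n$ becomes the Laurent expression $T_n\bigl(\tfrac12(s+s^{-1})\bigr)=\tfrac12(s^n+s^{-n})$. The pullback of $F$ then reads $\tfrac12(s^{b'}+s^{-b'})-\tfrac12(r^{a'}+r^{-a'})$, and clearing the monomial denominator factors it (up to a nonzero Laurent monomial) as $(s^{b'}-r^{a'})(s^{b'}r^{a'}-1)$. Because $\gcd(a',b')=1$, each factor is an irreducible binomial — the same coprimality phenomenon governing binomial ideals in Theorem \ref{thm:maintoric}, checked here via the standard criterion that $r^{a'}-c$ is irreducible over $\mathbb{C}(s)$ when $c$ is not a $p$-th power for any prime $p\mid a'$. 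The covering $(s,r)\mapsto(x,y)$ is generically four-to-one, with fibre $\{(s^{\pm1},r^{\pm1})\}$, and the involution $r\mapsto r^{-1}$, which fixes both $x$ and $y$, sends the first factor to the second; hence the two branches upstairs project onto one and the same curve, so $V(F)$ is irreducible. Since the two distinct factors upstairs are coprime, the pullback is squarefree, whence $F$ is squarefree; an irreducible zero locus together with squarefreeness forces $F$ to be irreducible, completing the argument. The delicate points to verify carefully are the exact fibre count of the double cover and that the two factors are genuinely interchanged by the involution rather than each projecting to a reducible image.
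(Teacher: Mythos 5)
Your proposal is correct, and its skeleton coincides with the paper's: use the composition law $T_p(T_q)=T_{pq}$ to show that $F=T_{b'}(x)-T_{a'}(y)$ vanishes on the parametrization (since $ab'=a'b=\operatorname{lcm}(a,b)$), then reduce everything to the irreducibility of $F$. The difference is that the paper stops there and cites Freudenburg--Freudenburg--Zieve (Section 3 of \cite{freudenburg2009curves}) for the irreducibility, whereas you actually prove it, via the substitution $x=\tfrac12(s+s^{-1})$, $y=\tfrac12(r+r^{-1})$ and the factorization of the pullback into the two Laurent binomials $(s^{b'}-r^{a'})(s^{b'}r^{a'}-1)$, which are irreducible precisely because $\gcd(a',b')=1$ and are exchanged by the deck transformation $r\mapsto r^{-1}$. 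This is a genuine addition: it makes the theorem self-contained and ties the irreducibility to the same binomial/toric phenomenon as Theorem \ref{thm:maintoric}, and it is in fact the same cover $v\mapsto\tfrac12(v+v^{-1})$ the paper exploits later (Proposition \ref{prop:degtensor}, Theorem \ref{thm:cosineChebDim}), so the argument fits the paper's toolkit. Two small points to tidy up if you write this out in full: for the irreducibility of the binomials it is cleaner to invoke the primality of toric (lattice) ideals for coprime exponents than the Vahlen--Capelli criterion for $X^n-c$, whose statement has an extra clause when $4\mid n$ (harmless here, but it needs a word); and the deduction ``$V(F)$ irreducible and $F$ squarefree $\Rightarrow$ $F$ irreducible'' uses that $\pi$ is finite and surjective so that $V(F)=\pi(C_1\cup C_2)$ and that squarefreeness descends along the dominant map $\pi$ — both true, but worth stating.
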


\begin{proof}
The Chebyshev $T$-polynomials satisfy the following equalities:
\[ T_{n}(T_{m}) = T_{m}(T_{n})=T_{nm}(t), \quad \text{and thus } \quad T_{a}(t) = T_{a'}(T_{{\rm gcd}(a,b)}(t)).\]
Hence the Chebyshev curve $\mathcal X_{a,b,T}$ is contained in the variety defined by $T_{b'}(x) - T_{a'}(y) = 0$. The claim follows by proving that this polynomial is irreducible, see \cite[Section 3]{freudenburg2009curves}.
\end{proof} 

\begin{corollary}
    Let $a,b \in \mathbb{N}$ be such that $0<a\leq b$. The degree of $\mathcal{X}_{a,b,T}$ is $b'=\frac{b}{{\rm gcd}(a,b)}$. 
\end{corollary}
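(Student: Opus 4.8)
The plan is to read off the degree directly from the defining equation furnished by the preceding theorem, so that the corollary becomes a one-line consequence of a total-degree computation. That theorem identifies $\mathcal{X}_{a,b,T}$ with the plane curve $V(F)$, where $F(x,y) = T_{b'}(x) - T_{a'}(y)$ is irreducible and generates the vanishing ideal. Since $\mathcal{X}_{a,b,T} \subset \mathbb{C}^2$ is a hypersurface whose radical ideal is generated by the single polynomial $F$, its degree equals the total degree $\deg F$. Concretely, I would intersect $V(F)$ with a generic affine line $\{\alpha x + \beta y + \gamma = 0\}$ and eliminate one variable; this yields a univariate polynomial of degree $\deg F$, whose number of roots is exactly the count of intersection points appearing in the definition of degree recalled after Theorem \ref{thm:maintoric}.

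The remaining task is then to compute $\deg F$. Here I use that the Chebyshev polynomial $T_k$ has degree exactly $k$, with leading term $2^{k-1} x^k$ for $k \geq 1$. Thus $T_{b'}(x)$ contributes a monomial of total degree $b'$, while $T_{a'}(y)$ contributes one of total degree $a'$. Because $0 < a \leq b$, dividing by $\gcd(a,b)$ preserves the inequality and gives $a' \leq b'$. As $T_{b'}(x)$ and $T_{a'}(y)$ involve disjoint sets of variables, no cancellation of top-degree terms can occur, so $\deg F = \max(a',b') = b'$.

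The hard part here is essentially nonexistent: all of the geometric content lives in the previous theorem, and the only point worth verifying is that the two univariate pieces cannot interact to lower the total degree. This is immediate since they live in different variables. Combining the two observations gives $\deg \mathcal{X}_{a,b,T} = \deg F = b'$, as claimed.
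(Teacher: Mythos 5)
Your proposal is correct and matches the paper's (implicit) argument: the corollary follows directly from the preceding theorem by reading off the total degree of the irreducible defining polynomial $T_{b'}(x)-T_{a'}(y)$, which is $\max(a',b')=b'$ since the two Chebyshev pieces involve disjoint variables and $a\leq b$ gives $a'\leq b'$. The paper offers no separate proof, and your verification of the generic-line intersection count and the non-cancellation of leading terms fills in exactly the routine details that were left to the reader.
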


The singularities of $\mathcal{X}_{a,b,T}$ can be found from the defining equation \cite[Proposition 3.1]{freudenburg2009curves}. 
\begin{theorem} \label{thm:singTcurve}
    The singular points $\textup{Sing}(\mathcal{X}_{a,b,T})$ of the Chebyshev curve $\mathcal{X}_{a,b,T}$ are 
    \[ 
    \textup{Sing}(\mathcal{X}_{a,b,T})=\{(\cos(\tfrac{k\pi}{b}), \cos(\tfrac{l\pi}{a})) : 1\leq k \leq b-1, \quad 1 \leq l \leq a-1, \quad k \equiv l \mod{2}\}.
    \]
    Moreover, all these points are nodal singularities of $\mathcal{X}_{a,b,T}$.
\end{theorem}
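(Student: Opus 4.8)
The claim has two parts: first, identifying the singular points explicitly, and second, verifying they are all nodes. Since Theorem 3.3 gives us the defining equation $F(x,y) = T_{b'}(x) - T_{a'}(y)$, the plan is to work entirely with this polynomial. I will assume $\gcd(a,b)=1$ first (so $a'=a$, $b'=b$), treating the general case by the substitution $x \mapsto T_{\gcd(a,b)}(x)$ afterward, or simply noting that the singular analysis reduces to the coprime curve pulled back along $T_{\gcd}$.

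\textbf{Locating the singularities.}
A point $(x_0,y_0)$ on the plane curve $V(F)$ is singular exactly when $F = \partial_x F = \partial_y F = 0$ there. The key computational input is the derivative formula for Chebyshev polynomials, namely $T_k'(t) = k\,U_{k-1}(t)$, where $U_{k-1}$ is the second-kind Chebyshev polynomial. Thus $\partial_x F = b\,U_{b-1}(x)$ and $\partial_y F = -a\,U_{a-1}(y)$. The first step is therefore to solve $U_{b-1}(x_0)=0$ and $U_{a-1}(y_0)=0$. Using the classical root formula $U_{k-1}(\cos\theta)=\frac{\sin(k\theta)}{\sin\theta}$, the roots of $U_{b-1}$ are $x_0=\cos(k\pi/b)$ for $1\le k\le b-1$, and the roots of $U_{a-1}$ are $y_0=\cos(l\pi/a)$ for $1\le l\le a-1$. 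Among these candidate points, I then impose the curve condition $F=0$, i.e. $T_b(x_0)=T_a(y_0)$. Evaluating via $T_k(\cos\phi)=\cos(k\phi)$ gives $T_b(\cos(k\pi/b))=\cos(k\pi)=(-1)^k$ and $T_a(\cos(l\pi/a))=\cos(l\pi)=(-1)^l$, so the constraint $F=0$ becomes $(-1)^k=(-1)^l$, that is $k\equiv l \pmod 2$. This recovers exactly the stated point set.

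\textbf{Verifying the nodal condition.}
For the second part, recall that a singular point of a plane curve is a node (an ordinary double point) precisely when the Hessian of $F$ is nonsingular there, equivalently when the quadratic term in the local Taylor expansion factors into two distinct linear forms. At a candidate point, $\partial_{xy}F=0$ identically (since $F$ separates the variables), so the Hessian is diagonal with entries $\partial_{xx}F = b\,U_{b-1}'(x_0)$ and $\partial_{yy}F = -a\,U_{a-1}'(y_0)$. I would show both are nonzero: this amounts to checking that the simple roots of $U_{b-1}$ and $U_{a-1}$ are genuinely simple, which follows because $U_{k-1}$ has $k-1$ distinct real roots in $(-1,1)$ and hence no repeated roots, so its derivative is nonzero at each root. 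With a diagonal, full-rank Hessian whose diagonal entries have opposite effective signs (the quadratic form is $\alpha(x-x_0)^2 + \beta(y-y_0)^2$ with $\alpha\beta\neq 0$), the quadratic term factors over $\mathbb{C}$ as a product of two distinct linear forms, confirming a node.

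\textbf{Expected main obstacle.}
The conceptual steps are routine once the derivative identity $T_k'=kU_{k-1}$ and the root descriptions are in hand; the genuine care is needed in the reduction to the coprime case. When $\gcd(a,b)=d>1$, the equation is $T_{b'}(x)=T_{a'}(y)$, and one must confirm that the additional factor introduced by the substitution does not create non-nodal degenerations or spurious singular points, and that the indexing $1\le k\le b-1,\ 1\le l\le a-1$ in the statement is stated relative to the original $a,b$ rather than $a',b'$ — so I would double-check the bookkeeping between the $(a,b)$ and $(a',b')$ parametrizations to ensure the listed point set matches, and that each listed point is a \emph{distinct} node rather than a higher-order coincidence. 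This reconciliation of indices is the one place where a sign or range error could slip in, so it deserves the most attention.
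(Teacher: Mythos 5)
Your argument is correct, and it is worth noting that the paper offers no proof of this statement at all: it quotes the result from Proposition 3.1 of Freudenburg's article on Chebyshev curves, whose proof is essentially your computation (Jacobian criterion for $T_b(x)-T_a(y)$, the identity $T_k'=kU_{k-1}$, the explicit roots $\cos(k\pi/b)$ of $U_{b-1}$, the parity condition from $T_b(\cos(k\pi/b))=(-1)^k$, and the nondegenerate diagonal Hessian forcing an ordinary node). So you have supplied a complete standalone derivation where the paper has only a citation. The one issue you flag but do not resolve is genuinely the delicate one, and the resolution is that the theorem implicitly assumes $\gcd(a,b)=1$: the defining equation is $T_{b'}(x)-T_{a'}(y)$, so for $d=\gcd(a,b)>1$ the curve coincides with $\mathcal{X}_{a',b',T}$ and its singular locus is indexed by $1\le k\le b'-1$, $1\le l\le a'-1$, which does not match the displayed set in terms of $a,b$. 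A concrete check: $a=2$, $b=4$ gives the smooth parabola $y=2x^2-1$, yet the stated formula lists the two points $(\pm\sqrt{2}/2,\,0)$, which lie on the curve but are not singular. So your proof needs no repair beyond making the coprimality hypothesis explicit (as the surrounding results in the paper, e.g.\ Theorem \ref{thm:hyperchev}, also do); two small cosmetic points are that the parenthetical about ``opposite effective signs'' is irrelevant over $\mathbb{C}$, where $\alpha u^2+\beta v^2$ with $\alpha\beta\neq 0$ always splits into two distinct linear forms, and that one should remark the listed points are pairwise distinct because $\cos$ is injective on $(0,\pi)$.
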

The points $\textup{Sing}(\mathcal{X}_{a,b,T})$ are used in bivariate polynomial interpolation. In that context, they are called \emph{Padua points}, see for instance \cite{bos2006bivariate,erb2016bivariate}.
The roots of the Chebyshev polynomial $T_a(t)$ are $\cos((1/2+l)\pi/a), l = 0, \ldots a-1$. We use this simple expression to count real intersections of ${\cal X}_{a,b,T}$ with lines through the origin.

\begin{theorem}\label{thm:hyperchev}
    Consider $ 0 < a < b $ in $\mathbb{N}$, so that $a,b$ are coprime. Any line passing through the origin intersects the curve $\mathcal{X}_{a,b,T}$ in at least $a$ real points, counting multiplicities.
\end{theorem}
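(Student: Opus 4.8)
The plan is to reduce the assertion to a one-dimensional sign-counting problem and then exploit the frequency gap $a<b$. Write the line through the origin as $\{\alpha x+\beta y=0\}$ with $(\alpha,\beta)\neq(0,0)$. For every $\theta\in[0,\pi]$ the pair $(\cos(a\theta),\cos(b\theta))$ is a real point of $\mathcal{X}_{a,b,T}$, and $t=\cos\theta$ is a bijection $[0,\pi]\to[-1,1]$, so such a point lies on the line exactly when
\[ g(\theta)\,:=\,\alpha\cos(a\theta)+\beta\cos(b\theta)\,=\,0. \]
Because $\gcd(a,b)=1$, a tower-of-fields argument (the index $[\mathbb{C}(t):\mathbb{C}(T_a,T_b)]$ divides both $a$ and $b$) shows $\mathbb{C}(T_a,T_b)=\mathbb{C}(t)$, so the parametrization is birational; hence distinct $\theta\in(0,\pi)$ give distinct points of the curve away from its finitely many nodes. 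It therefore suffices to exhibit at least $a$ values $\theta\in(0,\pi)$ with $g(\theta)=0$.

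If $\alpha=0$ the line is the $x$-axis and $g=\beta\cos(b\theta)$ already has $b\geq a$ zeros in $(0,\pi)$, so I assume $\alpha\neq0$. The key step is to sample $g$ at the $b$ zeros of the high-frequency term, namely $p_l=\frac{(2l-1)\pi}{2b}$ for $l=1,\dots,b$, where $\cos(b\,p_l)=0$. There the $\beta$-term drops out and
\[ g(p_l)\,=\,\alpha\cos(\xi_l),\qquad \xi_l\,=\,\frac{a(2l-1)\pi}{2b}, \]
so the signs of $g$ at the sample points are independent of $\beta$ and are governed entirely by the slowly varying cosine $\cos(\xi_l)$.

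I would then count the sign changes of the sequence $\cos(\xi_1),\dots,\cos(\xi_b)$, which turns out to be exact for two reasons. First, consecutive samples satisfy $\xi_{l+1}-\xi_l=\frac{a\pi}{b}<\pi$, so each gap contains at most one point where $\cos$ changes sign; hence the sequence registers a sign change across a gap precisely when $\cos$ genuinely changes sign there. Second, as $l$ runs from $1$ to $b$ the argument $\xi_l$ sweeps the interval $\big(\tfrac{a\pi}{2b},\,a\pi-\tfrac{a\pi}{2b}\big)$, which contains exactly the $a$ odd multiples $\tfrac{(2k-1)\pi}{2}$, $k=1,\dots,a$, of $\pi/2$ (this uses $0<a/b<1$). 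Thus the sampled sequence has exactly $a$ sign changes, and by the intermediate value theorem $g$ has a zero strictly between the two poles bracketing each one, yielding at least $a$ zeros in $(0,\pi)$.

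The main point requiring care is the degenerate case in which a sample vanishes, $\cos(\xi_l)=0$. A short divisibility check (using $\gcd(a,b)=1$) shows this occurs precisely when $a$ and $b$ are both odd, for the single index $l=(b+1)/2$, where $g(p_l)=0$ is already a root sitting at the origin point of the curve. One then recounts uniformly: each of the $a$ odd multiples of $\pi/2$ in the swept interval either lands strictly inside a gap, forcing a strict sign change, or lands on a sample $\xi_l$, forcing $g(p_l)=0$ directly, and either way contributes a distinct zero of $g$, so the bound $a$ persists. The remaining bookkeeping—that these zeros of $g$ correspond to real intersection points of the curve with the line counted with multiplicity, even for special lines through a node, where two parameter values collapse to a single point of intersection multiplicity at least two—is exactly what the phrase ``counting multiplicities'' absorbs and is routine given the birationality of the parametrization.
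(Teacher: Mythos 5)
Your proposal is correct and follows essentially the same strategy as the paper's proof: sample $\alpha T_a - \beta T_b$ at the roots of $T_b$ (you do this in the angular variable $\theta$), where the value reduces to $\alpha$ times the low-frequency term, count the resulting sign changes via the interlacing/equidistribution of Chebyshev roots, and conclude by the intermediate value theorem together with generic injectivity of the parametrization from $\gcd(a,b)=1$. Your explicit treatment of the degenerate sample (both $a,b$ odd, where $T_a$ and $T_b$ share the root $t=0$) is a point the paper dispatches only by restricting to generic $(\alpha,\beta)$, but the underlying argument is the same.
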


\begin{proof}
    We prove that, for generic $(\alpha, \beta) \in \mathbb{S}^1$, the polynomial $\alpha T_{a}(t) - \beta T_{b}(t) \in \mathbb{R}[t]$ has at least $a$ distinct real roots. Since ${\rm gcd}(a,b)=1$, ${\cal T}_{a,b}$ is generically one-to-one. The roots of $\alpha T_{a}(t) - \beta T_{b}(t)$ will then correspond to distinct real intersections of $\mathcal{X}_{a,b,T}$ with the line $\alpha x- \beta y=0$. We denote by $x_k = \cos(\frac{(k+1/2)\pi}{a})$, resp. $y_l = \cos(\frac{(l+1/2)\pi}{b})$, the roots of $T_a$, resp. $T_b$. Here $k$ ranges from $0$ to $a-1$, and $l=0,\ldots,b-1$. We prove that between two  consecutive roots of $T_a$ there is a root of $T_b$. Indeed, we can find $l \in \{0,\ldots, b-1\}$ such that 
    $$
    \cos\frac{(k+3/2)\pi}{a} \, < \,  \cos\frac{(l+1/2)\pi}{b} \, < \, \cos\frac{(k+1/2)\pi}{a}.
    $$
    This is equivalent to finding an integer in the interval $(\frac{b}{a}(k + \frac{1}{2}) - \frac{1}{2}, \frac{b}{a}(k+\frac{3}{2}) - \frac{1}{2}) = (y, y + \frac{b}{a})$. Since this interval has length $\frac{b}{a} > 1$, it contains an integer. Let $y_i$ be a root of $T_b$ such that $x_i < y_i < x_{i+1}$ for $i=1, \ldots a-1$. Note that we can also find two additional roots $y_0, y_{a}$ of $T_b$ such that $y_0 < x_1$ and $y_{a}>x_a$. Since the roots of $T_a$ are simple, $T_a$ changes sign between $y_i$ and $y_{i+1}$. By the intermediate value theorem, $\alpha T_{a}(t) - \beta T_{b}(t)$ has a root in each interval $(y_{i}, y_{i+1})$ for $i=0, \ldots, a-1$ and therefore it has at least $a$ distinct real roots.
\end{proof}

It follows from our proof of Theorem \ref{thm:hyperchev} that, like all orthogonal polynomials, consecutive Chebyshev polynomials $T_a,T_{a+1}$ have interlacing roots. 
In the special case of Chebyshev curves $\mathcal{X}_{a,a+1,T}$,  
Theorem \ref{thm:hyperchev} implies \textit{hyperbolicity}. 

\begin{theorem}\label{thm:hyperT}
For $a \in \mathbb{N} \setminus \{0\}$, the Chebyshev plane curve $\mathcal{X}_{a,a+1,T}$ is hyperbolic with respect to the origin. That is, any line passing through the origin intersects the curve $\mathcal{X}_{a,a+1,T}$ in $\deg(\mathcal{X}_{a,a+1,T})=a+1$ real points, counting multiplicities.
\end{theorem}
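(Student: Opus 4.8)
The plan is to combine three ingredients: the degree of the curve, the lower bound from Theorem \ref{thm:hyperchev}, and a parity argument on complex-conjugate intersection points. First I would record the elementary facts. Since $\gcd(a,a+1)=1$, the degree corollary gives $\deg \mathcal{X}_{a,a+1,T}=a+1$. The origin does not lie on the curve: because $a$ and $a+1$ have opposite parities, exactly one of $T_a(0)=\cos(a\pi/2)$ and $T_{a+1}(0)=\cos((a+1)\pi/2)$ vanishes while the other equals $\pm1$, so $T_{a+1}(0)-T_a(0)\neq 0$ and hyperbolicity with respect to the origin is well posed. I would then pass to the projective closure $\overline{\mathcal{X}}_{a,a+1,T}\subset\mathbb{P}^2$ and note that the leading form of $T_{a+1}(x)-T_a(y)$ is $2^a x^{a+1}$, so the unique point at infinity is the real point $[0:1:0]$.

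Fix a real line $\ell$ through the origin, $\alpha x-\beta y=0$ with $(\alpha,\beta)\in\mathbb{S}^1$. By Bézout, $\ell$ meets $\overline{\mathcal{X}}_{a,a+1,T}$ in exactly $a+1$ points of $\mathbb{P}^2$ counted with multiplicity. The affine intersections are governed by $g_{\alpha,\beta}(t):=\alpha T_a(t)-\beta T_{a+1}(t)$ pulled back along the parametrization $t\mapsto(T_a(t),T_{a+1}(t))$, which is birational since $a,a+1$ are coprime: each real root of $g_{\alpha,\beta}$ yields a real affine point of $\ell\cap\overline{\mathcal{X}}_{a,a+1,T}$, and every affine intersection point arises this way because the parametrization is dominant. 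The conceptual core is the remark following Theorem \ref{thm:hyperchev}: $T_a$ and $T_{a+1}$ have strictly interlacing real roots $y_0>x_0>y_1>\cdots>x_{a-1}>y_a$, where the $x_k$ are the roots of $T_a$ and the $y_l$ those of $T_{a+1}$. Evaluating $g_{\alpha,\beta}(y_l)=\alpha T_a(y_l)$ and using that $T_a$ changes sign across each pair $y_{l+1}<x_l<y_l$ shows that $g_{\alpha,\beta}$ alternates sign along $y_a<\cdots<y_0$, producing $a$ real roots by the intermediate value theorem (with the case $\alpha=0$ being trivial, as then $g_{\alpha,\beta}=-\beta T_{a+1}$ is visibly real rooted). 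This is exactly the mechanism of Theorem \ref{thm:hyperchev}, and it is the pencil form of the Hermite--Biehler theorem: any real combination of an interlacing pair is real rooted.

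It remains to upgrade ``at least $a$'' to ``exactly $a+1$'', and the key trick is the parity of non-real intersections. Since $\ell$ and the curve are defined over $\mathbb{R}$, the non-real points of $\ell\cap\overline{\mathcal{X}}_{a,a+1,T}$ occur in complex-conjugate pairs of equal multiplicity, so their total multiplicity is even; but Theorem \ref{thm:hyperchev} accounts for at least $a$ of the $a+1$ units of multiplicity as real, leaving at most one non-real unit, which being even must be zero. Hence all $a+1$ intersections are real. The step I expect to require the most care is the bookkeeping of multiplicities together with the degenerate lines and the point at infinity. For the $y$-axis $\beta=0$ the line passes through $[0:1:0]$, so I must check that the affine equation $T_a(y)=T_{a+1}(0)\in\{-1,0,1\}$ contributes $a$ real roots, which is valid precisely because $|T_{a+1}(0)|\le1$, and that the residual intersection at $[0:1:0]$ is a single real point, restoring the count $a+1$. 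When $\ell$ passes through a node of the curve (a Padua point from Theorem \ref{thm:singTcurve}), I must confirm that the two parameter preimages correctly reproduce the local intersection multiplicity, so that the conjugate-pair parity argument still applies verbatim. Once these multiplicity checks are in place, the conclusion holds for every real line through the origin, which is the asserted hyperbolicity.
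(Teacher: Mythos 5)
Your proof is correct and follows essentially the same route as the paper's: Theorem \ref{thm:hyperchev} supplies $a$ real roots of $\alpha T_a(t) - \beta T_{a+1}(t)$, and since the non-real roots of a real polynomial of degree $a+1$ come in conjugate pairs, the single remaining root is forced to be real. The additional bookkeeping you carry out for the point at infinity $[0:1:0]$, the line $x=0$, and lines through nodes does not appear in the paper's terser proof, but it only makes the same argument more complete.
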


\begin{proof}
 By Theorem \ref{thm:hyperchev}, we know that for generic coefficients $(\alpha, \beta) \in \mathbb{S}^1$ the polynomial $\alpha T_{a}(t) - \beta T_{a+1}(t) \in \mathbb{R}[t]$ has at least $a$ distinct real roots. Since this polynomial has degree $a+1$, all of its $a+1$ roots are real. Thus, we have at least $a+1$ real intersection points with the line. Since $a+1$ is the degree of $\mathcal{X}_{a,a+1,T}$, this is the exact number of intersection points. A root has multiplicity 2 if the line $\alpha x- \beta y=0$ passes through a node of the curve.
\end{proof}

\begin{example}
    The Chebyshev curve $\mathcal{X}_{6,7,T}$ in Figure \ref{fig:chevhyper} has degree 7. Any line passing through $(0,0)$ intersects the curve in $\deg(\mathcal{X}_{6,7,T})=7$ real points, counting multiplicities.
\begin{figure}[ht]
    \begin{subfigure}{.5\textwidth}
    \centering
    \includegraphics[height=4.5cm]{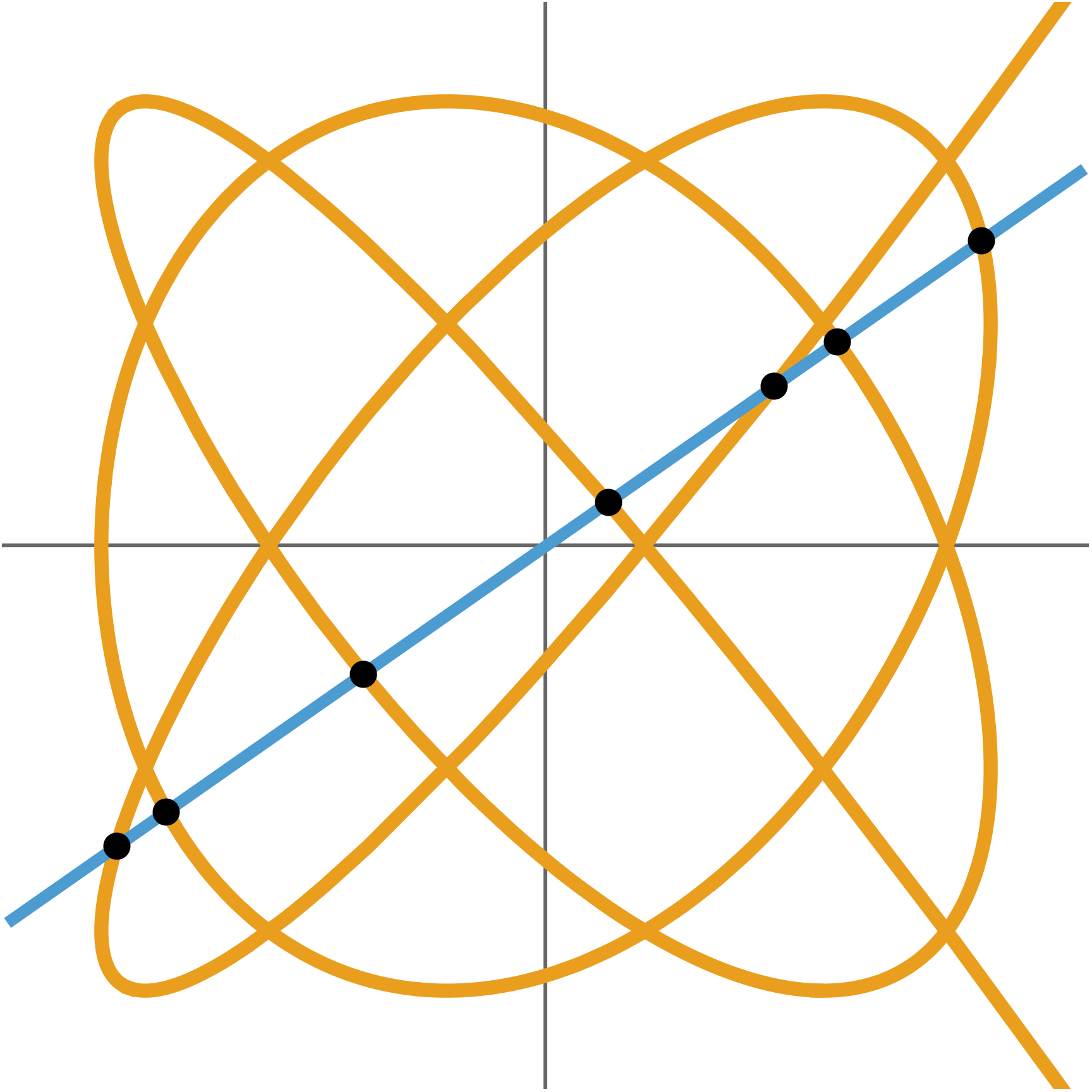}
    \caption*{Chebyshev curve $\mathcal{X}_{6,7,T}$}
    \end{subfigure}
    \begin{subfigure}{.5\textwidth}
    \centering
    \includegraphics[height=4.5cm]{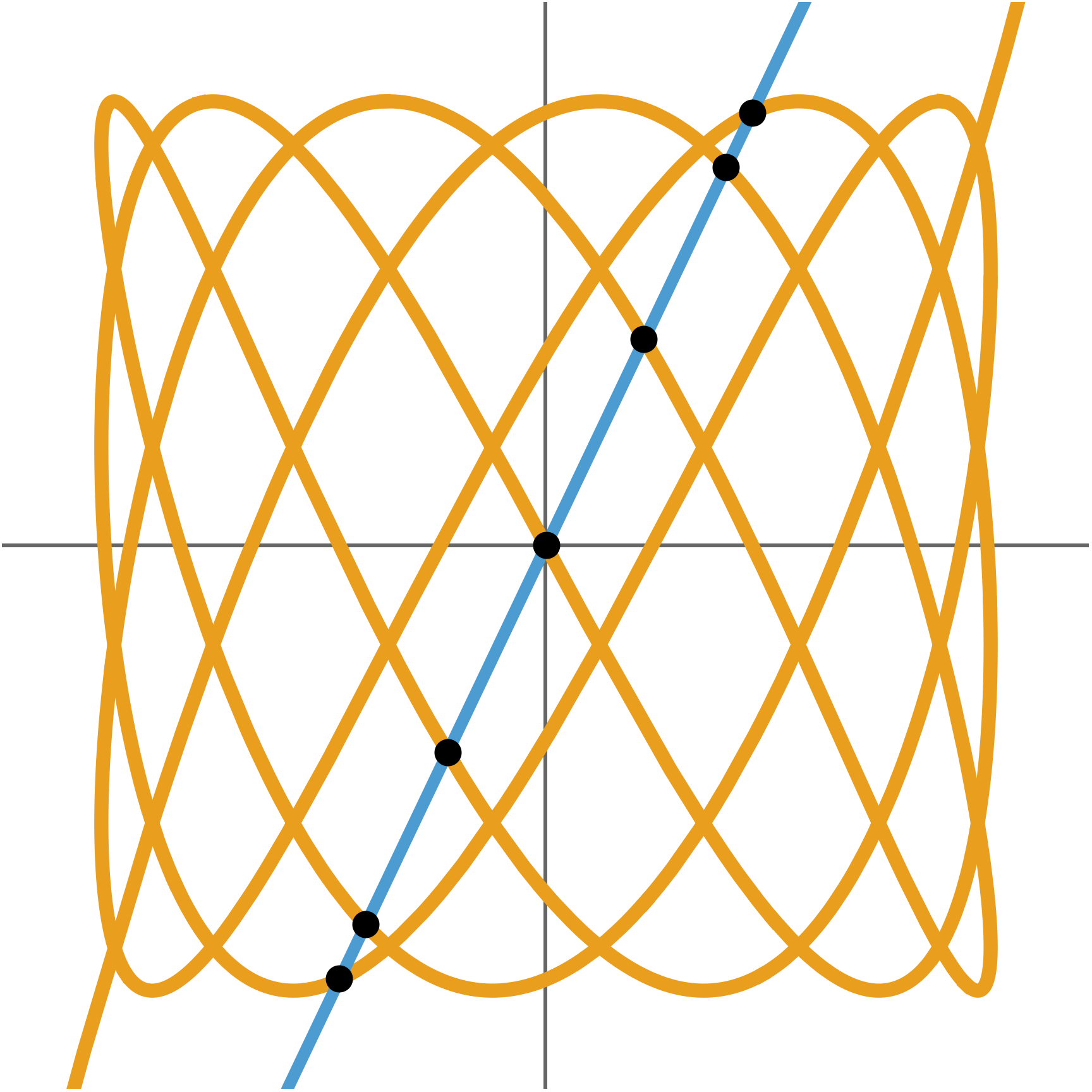}
    \caption*{Chebyshev curve $\mathcal{X}_{7,13,T}$}
    \end{subfigure}
    \caption{Chebyshev curves intersected by lines passing through the origin.}
\label{fig:chevhyper}    
\end{figure}
\end{example}

We close this subsection with a discussion on Chebyshev polynomials of the second kind:
\[ 
U_0(t) \, = \, 1, \quad U_1(t) \, = \, 2t, \quad \text{and} \quad U_{k+1}(t) \, = \, 2t \cdot U_{k}(t) - U_{k-1}(t). 
\]
The corresponding parametric plane curves are called \emph{Chebyshev $U$-curves}. We write $\mathcal{X}_{a, b, U}$ for the curve parametrized by $(U_a(t),U_b(t))$. Two examples are shown in Figure \ref{fig:chevu}.

In contrast with Chebyshev $T$-curves (Theorem \ref{thm:singTcurve}), Chebyshev $U$-curves may have singularities which are not simple nodes. For instance, the Chebyshev $U$-curve $\mathcal{X}_{5,7, U}$ has three identical tangent lines at the point $(-1,1)$ in black. The origin, also in black, has multiplicity 4 in the $U$-curve $\mathcal{X}_{9,14,U}$, see Figure \ref{fig:chevu}.

\begin{figure}[ht]
    \begin{subfigure}{.5\textwidth}
    \centering
    \includegraphics[height=4cm]{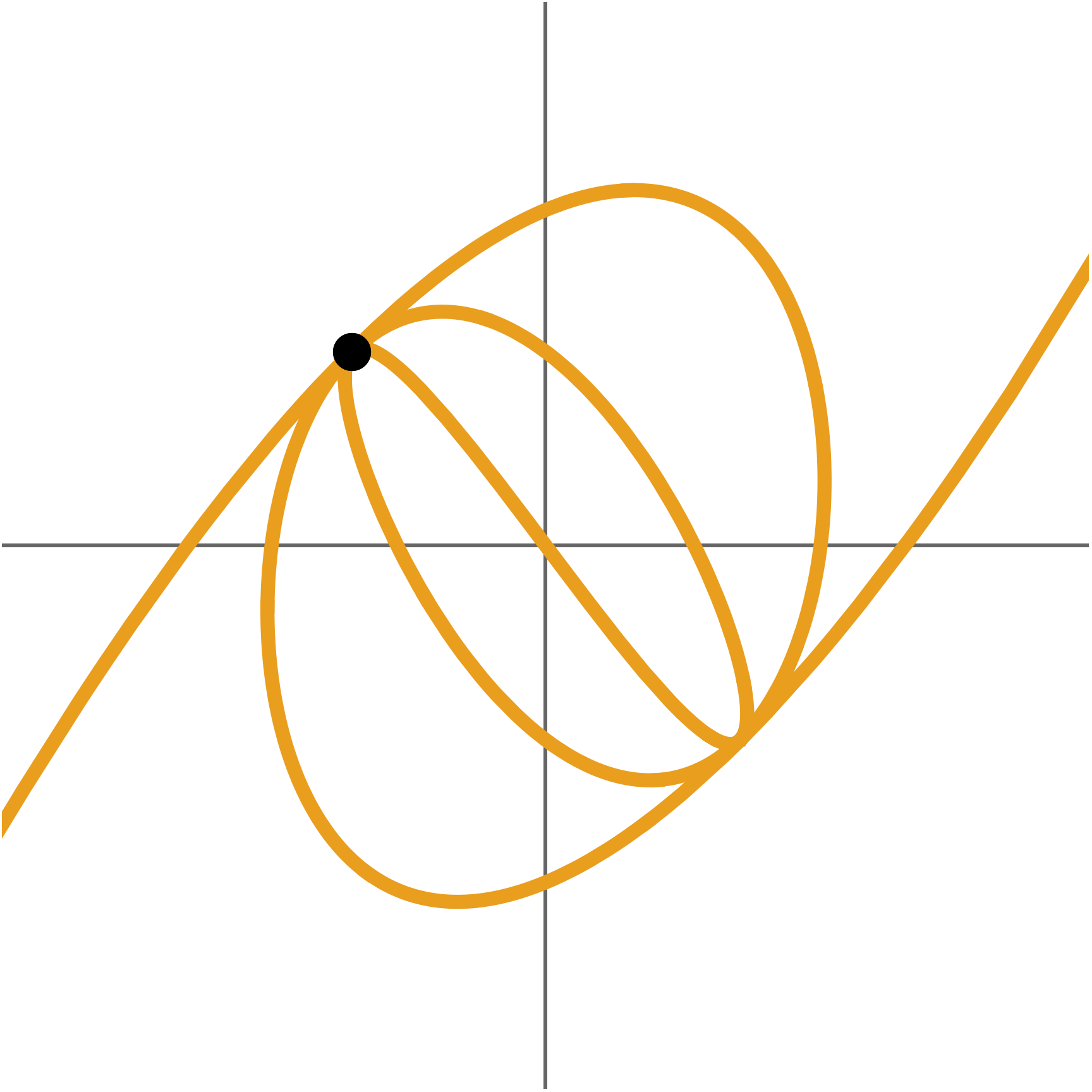}
    \caption*{Chebyshev $U$-curve $\mathcal{X}_{5,7,U}$}
    \end{subfigure}
    \begin{subfigure}{.5\textwidth}
    \centering
    \includegraphics[height=4cm]{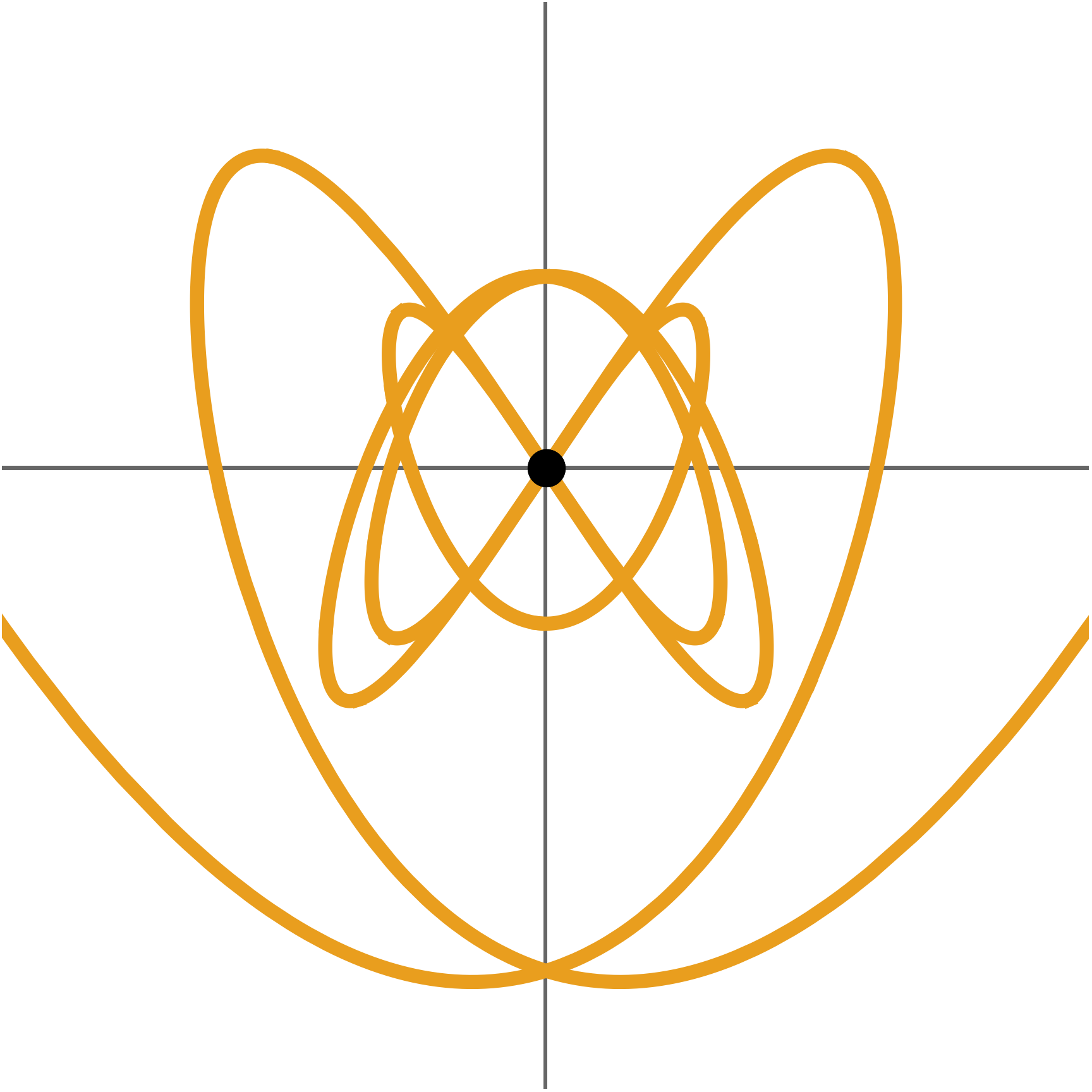}
    \caption*{Chebyshev $U$-curve $\mathcal{X}_{9,14,U}$}
    \end{subfigure}
    \caption{Chebyshev $U$-curves with different singularities.}
\label{fig:chevu}    
\end{figure}

We were not able to find the implicit equation of ${\cal X}_{a,b,U}$ in general. We illustrate by means of an example how such equations can be found using some algebraic properties of $U_k(t)$.

\begin{example} 
    We compute an implicit equation for ${\cal X}_{k,k+2,U}$. One checks that 
    \[ 
         2\, T_m(t) = U_m(t) - U_{m-2}(t)
, \quad \quad
    2\, T_m(t)U_n(t) = \left\{
    \begin{array}{ll}
        U_{m+n}(t)+ U_{n-m}(t) & \mbox{{\rm if}\, } n \geq m-1, \\
        U_{m+n}(t)- U_{n-m-2}(t) & \mbox{otherwise.}
    \end{array}
    \right.
    \]
    Combining these identities, and omitting $t$ to simplify the notation, we find that
$$
    (U_{k+2} - U_{k})U_k = U_{2k+2} -1, \quad (U_k - U_{k-2})U_{k+2} = U_{2k+2} + U_2.
$$
Using the fact that $U_{k+2} = U_k(U_2 - 1) - U_{k-2}$, we get
\[ (U_k - U_{k-2})U_{k+2} - (U_{k+2} - U_{k})U_k = \frac{U_{k+2} + U_{k-2}}{U_n} +2.\]
This gives an implicit equation for the parametric curve $(U_{k-2}(t), U_{k}(t), U_{k+2}(t))$ in $\mathbb{C}^3$:
\begin{equation} \label{eq:eqU}
    y^3  - 2y - z \, =\, x(1+yz).
\end{equation}
The composition property of Chebyshev $T$-polynomials gives a second relation:
\[ 
T_k\left (\frac{U_{k+2} - U_k}{2} \right ) = T_{k+2}\left (\frac{U_k - U_{k-2}}{2}\right ).
\]
We obtain an equation $f(y,z)=0$ for ${\cal X}_{k,k+2,U}$ as the numerator of $T_{k+2}(\frac{y-x}{2}) - T_k(\frac{z-y}{2}) = 0$ after substituting $x$ using \eqref{eq:eqU}. The resulting $f$ is reducible. For $k = 3$, we find 
\begin{align*}
    f(y,z)\, = \, &\left(y^8-3 y^7 z+3 y^6 z^2-7 y^6-y^5 z^3+12 y^5 z-5 y^4 z^2+15 y^4-10 y^3 z-10 y^2+1\right) \cdot\\
    &\left(y^5-4 y^3-3 y^2 z-2 y z^2+3 y-z^3+2 z\right) \cdot (y-z+2) \cdot  (y-z-2),
\end{align*}
where the degree 5 factor is the irreducible defining equation of ${\cal X}_{3,5,U}$.
\end{example}

Chebyshev $U$-polynomials are also orthogonal and $U_a,U_{a+1}$ have interlaced roots. This allows us to state the same hyperbolicity property as in Theorem \ref{thm:hyperT}. 

\begin{theorem}\label{thm:hyperU}
For $a \in \mathbb{N} \setminus \{0\}$, the Chebyshev plane curve $\mathcal{X}_{a,a+1,U}$ is hyperbolic with respect to the origin, in the sense that all intersection points of the curve with any line passing through the origin are real. 
\end{theorem}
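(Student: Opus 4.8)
The plan is to mirror the proof of Theorem \ref{thm:hyperT}, replacing the first-kind polynomials $T_k$ by the second-kind polynomials $U_k$. As in the $T$-curve case, the geometric claim reduces to a statement about real roots of a univariate polynomial. A line through the origin is $\{\alpha x - \beta y = 0\}$ for some $(\alpha,\beta) \in \mathbb{S}^1$, and its intersection with $\mathcal{X}_{a,a+1,U}$ is controlled by the roots $t$ of
\[ g(t) \, = \, \alpha\, U_a(t) - \beta\, U_{a+1}(t). \]
So I would prove that for every such $(\alpha,\beta)$ the polynomial $g$ has only real roots; the reality of the intersection points then follows from the correspondence between roots $t$ and points $(U_a(t), U_{a+1}(t))$, exactly as for the $T$-curve (here $\gcd(a,a+1)=1$, so $\mathcal{T}_{a,a+1}$ is generically one-to-one).

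First I would dispose of the degenerate directions. If $\beta = 0$ the line is $\{x=0\}$ and $g = \alpha U_a$, whose roots all lie in $(-1,1)$ and are real; symmetrically, if $\alpha = 0$ the roots of $g$ are those of $U_{a+1}$, again all real. So I may assume $\alpha\beta \neq 0$, in which case $\deg g = a+1$ since $U_{a+1}$ has strictly larger degree than $U_a$. The core step is then to exploit that consecutive Chebyshev $U$-polynomials, being orthogonal, have strictly interlacing roots, as remarked after Theorem \ref{thm:hyperchev}. Writing $r_1 > \cdots > r_{a+1}$ for the roots of $U_{a+1}$, each interval $(r_{i+1}, r_i)$ contains exactly one root of $U_a$, so $U_a$ changes sign across it. Evaluating $g$ at these roots gives $g(r_i) = \alpha\, U_a(r_i)$, and interlacing forces the values $U_a(r_i)$ to alternate in sign for $i = 1, \ldots, a+1$. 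The intermediate value theorem then yields a root of $g$ in each of the $a$ intervals $(r_{i+1}, r_i)$, hence at least $a$ distinct real roots.

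A parity argument upgrades this to all roots being real: since $g$ has degree $a+1$ and its non-real roots occur in conjugate pairs, at least $a$ real roots leaves a remaining budget of at most one, which cannot form a conjugate pair and is therefore real. Thus all $a+1$ roots of $g$ are real, for \emph{every} admissible $(\alpha,\beta)$ — not merely generic ones — and so every line through the origin meets $\mathcal{X}_{a,a+1,U}$ only in real points.

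The main obstacle is not the reality argument, which is clean, but the bookkeeping that turns real parameter values $t$ into genuine real intersection points of the variety: one must verify that the roots of $g$ account for all of $L \cap \mathcal{X}_{a,a+1,U}$, including any points that lie in the Zariski closure but not the image of the parametrization, and allowing for possible failure of global injectivity. This is handled exactly as in the $T$-curve case, the only new ingredient being interlacing of $U_a, U_{a+1}$ in place of $T_a, T_{a+1}$. Unlike Theorem \ref{thm:hyperT}, I would deliberately refrain from asserting that the number of intersection points equals $\deg \mathcal{X}_{a,a+1,U}$, since the degree and defining equation of the $U$-curve were not determined above; the weaker formulation ``all intersection points are real'' in the statement is precisely what sidesteps this gap.
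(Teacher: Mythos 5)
Your proof is correct and follows exactly the route the paper intends: the paper gives no explicit proof of Theorem \ref{thm:hyperU}, only the preceding remark that $U_a, U_{a+1}$ are orthogonal with interlaced roots so that the argument of Theorems \ref{thm:hyperchev} and \ref{thm:hyperT} carries over, and your write-up is a faithful and complete elaboration of that argument. The degenerate directions, the sign alternation of $U_a$ at the roots of $U_{a+1}$, the parity upgrade from $a$ to $a+1$ real roots, and the care about passing from real roots of $\alpha U_a - \beta U_{a+1}$ to real intersection points are all handled correctly.
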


\subsection{Space curves} \label{subsec:spacecurves}
We now consider Chebyshev space curves parametrized by  
\[ 
\mathcal{T}_{A,T}:\mathbb{C} \rightarrow \mathbb{C}^n, \quad \mathcal{T}_{A,T}(t)=(T_{a_1}(t),\ldots, T_{a_n}(t)),
\]
where $A=[a_1~\cdots~a_n] \in \mathbb{N}^{1 \times n}$. We denote the Zariski closure of the image by $\mathcal{X}_{A,T}$.
\begin{theorem} \label{thm:maincurves}
Let $A = [a_1~\cdots~a_n] \in \mathbb{N}^{1 \times n}$ be such that $0 < a_1 \leq \cdots \leq a_n$. If three entries $a_i, a_j, a_k \in A$ are pairwise coprime, then there exists a polynomial $P \in \mathbb{R}[x_i,x_j,x_k]$ such that $P(T_{a_i}(t),  T_{a_j}(t), T_{a_k}(t)) = t$. Hence, the Chebyshev curve $\mathcal{X}_{A,T}$ is smooth. Its prime ideal~is
\[ I(\mathcal{X}_{A,T}) \, = \, \langle \,  x_1 - T_{a_1}(P(x_i,x_j,x_k)), \, \ldots, \, x_n - T_{a_n}(P(x_i, x_j, x_k)) \, \rangle \, \subset \, \mathbb{C}[x_1, \ldots, x_n]. \]
\end{theorem}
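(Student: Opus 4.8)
The crux of the statement is the existence of the polynomial $P$; once it is in hand, smoothness and the ideal description follow formally. The plan for constructing $P$ is to produce $T_1(t)=t$ inside the $\mathbb{R}$-algebra $\mathcal{A}\subseteq\mathbb{R}[t]$ generated by $T_{a_i}(t),T_{a_j}(t),T_{a_k}(t)$, using only two structural facts: the composition law $T_\ell(T_d)=T_{\ell d}$ (so $T_{\ell d}\in\mathcal{A}$ whenever $T_d\in\mathcal{A}$), and the product-to-sum identity $2\,T_m(t)T_n(t)=T_{m+n}(t)+T_{|m-n|}(t)$. Combining them, if $m$, $n$ and $m+n$ are each a positive multiple of one of $a_i,a_j,a_k$, then $T_m,T_n,T_{m+n}\in\mathcal{A}$, hence $T_{|m-n|}=2\,T_mT_n-T_{m+n}\in\mathcal{A}$.

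The key step is therefore to arrange $|m-n|=1$ with $m,n,m+n$ hosted by the three generators, and this is exactly where pairwise coprimality enters, via the Chinese Remainder Theorem. Since at most one of $a_i,a_j,a_k$ is even, I may relabel so that $a_k$ is odd; then $2$ is invertible modulo $a_k$, and the system $n\equiv 0\ (\mathrm{mod}\ a_i)$, $n\equiv -1\ (\mathrm{mod}\ a_j)$, $2n\equiv -1\ (\mathrm{mod}\ a_k)$ has a positive solution $n$. Setting $m=n+1$ makes $n$ a multiple of $a_i$, $m$ a multiple of $a_j$, and $m+n=2n+1$ a multiple of $a_k$, with $|m-n|=1$. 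This yields the explicit
\[ t \,=\, T_1(t) \,=\, 2\,T_{n/a_i}\!\big(T_{a_i}(t)\big)\,T_{m/a_j}\!\big(T_{a_j}(t)\big) - T_{(m+n)/a_k}\!\big(T_{a_k}(t)\big), \]
so $P(x_i,x_j,x_k)=2\,T_{n/a_i}(x_i)\,T_{m/a_j}(x_j)-T_{(m+n)/a_k}(x_k)$ works (the degenerate case where one of $a_i,a_j,a_k$ equals $1$ is trivial, since then $t$ is already a coordinate). I expect this to be the main obstacle: one must recognize that realizing a degree-one element forces a configuration with three independent residue ``slots'', which explains both why three generators are needed and why two coprime ones never suffice, consistent with the fact that plane $T$-curves are singular.

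For the remaining claims I would pass to coordinate rings. Consider the surjection $\pi:\mathbb{C}[x_1,\ldots,x_n]\to\mathbb{C}[s]$, $x_\ell\mapsto T_{a_\ell}(s)$; its image contains $s=P(T_{a_i}(s),T_{a_j}(s),T_{a_k}(s))$, so $\pi$ is onto and $\ker\pi=I(\mathcal{X}_{A,T})$, giving $\mathbb{C}[x]/I(\mathcal{X}_{A,T})\cong\mathbb{C}[s]$. Hence $\mathcal{X}_{A,T}\cong\mathbb{A}^1$ is smooth (and rational), and the image of $\mathcal{T}_{A,T}$ is already closed. To identify the ideal, let $J=\langle\, x_\ell-T_{a_\ell}(P(x_i,x_j,x_k))\,\rangle$. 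Each generator lies in $\ker\pi$ because $P(T_{a_i}(s),T_{a_j}(s),T_{a_k}(s))=s$, so $J\subseteq I(\mathcal{X}_{A,T})$. For the reverse inclusion I would check that $\pi$ descends to an isomorphism $\mathbb{C}[x]/J\to\mathbb{C}[s]$ with inverse $s\mapsto P(x_i,x_j,x_k)\bmod J$: one composition sends $s\mapsto P(T_{a_i}(s),T_{a_j}(s),T_{a_k}(s))=s$, while the other sends $x_\ell\mapsto T_{a_\ell}(s)\mapsto T_{a_\ell}(P(x_i,x_j,x_k))\equiv x_\ell\pmod J$ by the defining relation of $J$. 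Since the classes of the $x_\ell$ generate $\mathbb{C}[x]/J$, the two maps are mutually inverse, so $J=\ker\pi=I(\mathcal{X}_{A,T})$ is prime. This last part is a routine verification once $P$ exists.
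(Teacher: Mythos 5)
Your proposal is correct and follows essentially the same route as the paper: the explicit $P = 2T_u(x_j)T_v(x_k) - T_w(x_i)$ built from the product-to-sum and composition identities (the paper organizes the arithmetic as a B\'ezout relation $ub-vc=1$ adjusted modulo the odd generator, which is equivalent to your direct CRT system), and then the identification of the ideal via the surjection $\mathbb{C}[x_1,\ldots,x_n]\to\mathbb{C}[t]$, which is the content of the paper's Lemma \ref{lem:primeideal}. Your two-sided-inverse verification of $J=\ker\pi$ is a minor repackaging of the paper's argument that $h-h(f_1(P),\ldots,f_n(P))\in J$ for every $h$.
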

Notice that, while Chebyshev plane curves are singular (Theorem \ref{thm:singTcurve}), the space curve ${\cal X}_{A,T}$ is smooth under mild conditions on $A$. The key lemma here is \cite[Proposition~4.1]{freudenburg2009curves}.
\begin{lemma}\label{lem:equaring}  Let $1 <a\leq b \leq c$. The inclusion $\mathbb{C}[T_a(t), T_b(t), T_c(t)] \subseteq \mathbb{C}[t]$ is an equality if and only if $a, b, c$ are pairwise coprime.
\end{lemma}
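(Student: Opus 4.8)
The plan is to reformulate the ring equality geometrically and then reduce it to two checkable conditions on the parametrization $\mathcal{T}_{A,T}\colon t \mapsto (T_a(t),T_b(t),T_c(t))$. Writing $R = \mathbb{C}[T_a(t),T_b(t),T_c(t)]$, the inclusion $R \subseteq \mathbb{C}[t]$ is an equality if and only if $t \in R$, i.e.\ if and only if $\mathcal{T}_{A,T}$ is an isomorphism onto $\mathcal{X}_{A,T} = \operatorname{Spec} R$. Since each $T_a(X)-T_a(t)\in R[X]$ is, up to its leading coefficient $2^{a-1}$, a monic polynomial in $X$ vanishing at $X=t$, the element $t$ is integral over $R$; hence $\mathbb{C}[t]$ is a finite $R$-module and $\mathcal{T}_{A,T}$ is a finite morphism. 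A finite birational morphism onto a curve is its normalization, and such a normalization is an isomorphism exactly when the curve is smooth. I would therefore show that $R = \mathbb{C}[t]$ holds iff $\mathcal{T}_{A,T}$ is both injective and unramified, and prove that these two properties hold simultaneously precisely when $a,b,c$ are pairwise coprime.

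For the analysis I pass to the Joukowski coordinate $t = \tfrac12(w+w^{-1})$, under which $T_n(t) = \tfrac12(w^n+w^{-n})$ and $\mathbb{C}[t]$ becomes the ring of $w \mapsto w^{-1}$ invariant Laurent polynomials. For $w_1,w_2 \in \mathbb{C}^\ast$ set $u = w_1/w_2$ and $v = w_1 w_2$; then $t_1 = t_2$ iff $u=1$ or $v=1$, while $T_n(t_1)=T_n(t_2)$ iff $w_1^n = w_2^{\pm n}$, i.e.\ iff $u^n = 1$ or $v^n = 1$. Thus $\mathcal{T}_{A,T}$ fails to be injective exactly when there exist $u\neq 1$ and $v\neq 1$ such that, for each $n \in \{a,b,c\}$, at least one of $u,v$ is an $n$-th root of unity. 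If $a,b,c$ are pairwise coprime, a pigeonhole argument assigns two of the three indices to the same variable, forcing that variable to be a $\gcd$-th, hence first, root of unity and equal to $1$ — a contradiction, so $\mathcal{T}_{A,T}$ is injective. Conversely, if some pair, say $\gcd(b,c)=d>1$, is not coprime, I exhibit a violation: taking $v$ a primitive $d$-th root of unity and $u$ a primitive $a$-th root of unity (or, when $\gcd(a,b,c)>1$, taking $u$ a primitive $\gcd(a,b,c)$-th root and $v$ arbitrary $\neq 1$) satisfies all three conditions with $u,v \neq 1$, so $\mathcal{T}_{A,T}$ is not injective and $R \neq \mathbb{C}[t]$.

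It remains to check the unramified condition for the pairwise coprime case. Since $\tfrac{d}{dt}T_n(t) = n\,U_{n-1}(t)$ vanishes exactly at the $w$ with $w^{2n}=1$ and $w^2\neq 1$, a common critical point of $T_a,T_b,T_c$ would satisfy $w^{2a}=w^{2b}=1$ with $w^2\neq1$; but pairwise coprimality gives $\gcd(2a,2b)=2$, whence $w^2=1$, a contradiction. Thus $\mathcal{T}_{A,T}$ is everywhere unramified, and together with injectivity this makes $\mathcal{X}_{A,T}$ smooth, giving $R=\mathbb{C}[t]$. I expect the main obstacle to be conceptual rather than computational: one must see that $\gcd(a,b,c)=1$ is \emph{not} sufficient (e.g.\ $a,b,c = 6,10,15$), and that the correct condition is the stronger pairwise coprimality. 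Geometrically this is the statement that each node of the plane curve $\mathcal{X}_{a,b,T}$ (Theorem \ref{thm:singTcurve}) is resolved by the third coordinate $T_c$ precisely when $c$ is coprime to both $a$ and $b$; the pigeonhole step is exactly what captures this. A secondary technical point is the reduction from the ring equality to the geometric injective-and-unramified criterion, which relies on the finiteness of $\mathcal{T}_{A,T}$ established at the outset.
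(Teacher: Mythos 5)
Your proof is correct, but it takes a genuinely different route from the paper. The paper only sketches this lemma: it cites \cite{freudenburg2009curves} for the full statement and then gives a \emph{constructive} proof of sufficiency, producing an explicit inverse $P = 2T_u(y)T_v(z) - T_w(x)$ from the product identity $2T_{ub}T_{vc} = T_{ub+vc}+T_{ub-vc}$ together with a B\'ezout/congruence argument arranging $ub - vc = 1$ and $ub+vc \equiv 0 \bmod a$. You instead argue geometrically: integrality of $t$ over $R$ makes $\mathcal{T}_{A,T}$ finite, and a finite morphism of complex varieties that is injective on closed points and unramified is a closed immersion, so $R = \mathbb{C}[t]$ reduces to checking injectivity (via the Joukowski substitution and the clean pigeonhole on which of $u = w_1/w_2$, $v = w_1w_2$ is an $n$-th root of unity) and the absence of common critical points (via $w^{2n}=1$, $w^2 \neq 1$). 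Your version has the advantage of being self-contained in \emph{both} directions --- the necessity argument via explicit roots of unity is elementary and is exactly the part the paper defers to the reference --- and it cleanly explains why $\gcd(a,b,c)=1$ alone does not suffice. What it does not deliver is the explicit formula for $P$, which the paper actually needs downstream: Theorem \ref{thm:maincurves} writes the generators of $I(\mathcal{X}_{A,T})$ in terms of a concrete $P$, and the subsequent example computes one. So your argument proves the lemma as stated but would have to be supplemented by the paper's construction (or an appeal to effectivity) wherever an explicit $P$ is required. One small presentational point: you do not need the normalization detour at all --- the ``only if'' half of your reduction is immediate (if $t = P(T_a,T_b,T_c)$ then applying $P$, resp.\ differentiating, gives injectivity and unramifiedness), and the ``if'' half is exactly the closed-immersion criterion you state, so the finite-birational-normalization framing can be dropped.
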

While a complete proof of the statement can be found in \cite{freudenburg2009curves}, it is instructive to explicitly construct a polynomial $P \in \mathbb{R}[x,y,z]$ such that $P(T_a(t), T_b(t),T_c(t))=t$ in the case where $a$, $b$ and $c$ are pairwise coprime. At most one of $a$, $b$, $c$ is even. Without loss of generality, we assume that $a$ is odd. Since $b$ and $c$ are coprime, one can find positive integers $u, v$ such that $ub - vc = 1$. Using standard properties of Chebyshev polynomials, we have
\[ 2T_{ub}T_{vc} = T_{ub + vc} + T_{ub - vc}, \quad \text{and thus} \quad 2T_u(T_{b})T_v(T_{c}) = T_{ub + vc} + T_1.\]
It suffices to prove that we can find $u$ and $v$ such that $ub-vc = 1$ and $ub + vc = wa$ for some $w \in \mathbb{Z}_{\geq 0}$. Indeed, then the equality above reads $2T_u(T_b)T_v(T_c) - T_w(T_a) = t$, and $P = 2T_u(y)T_v(z)-T_w(x)$. The equality $ub - vc = 1$ still holds after replacing $u$ by $u + xc$ and $v$ by $v + xb$, where $x$ is a positive integer. Therefore, one can take $x$ such that $2xbc + (ub + vc) \equiv 0 \mod{a}$. Such an $x$ exists, because $a$ is odd and thus coprime with $2bc$.

\begin{lemma} \label{lem:primeideal} 
Let $f = (f_1, \ldots, f_n) : \mathbb{C} \rightarrow \mathbb{C}^n$ be a polynomial map, i.e., $f(t) \in \mathbb{C}[t]$. Let $X \subset \mathbb{C}^n$ be the Zariski closure of ${\rm im} \, f$. If $\mathbb{C}[f_1, \ldots, f_n] = \mathbb{C}[t]$ and $P \in \mathbb{C}[x_1, \ldots, x_n]$ is such that $P(f_1, \ldots, f_n) = t$, then $X \subset \mathbb{C}^n$ is a smooth curve with prime ideal 
\begin{equation} \label{eq:J}
I(X) \, = \, \langle x_1 - f_1(P(x)), \ldots, x_n - f_n(P(x)) \rangle. 
\end{equation}
\end{lemma}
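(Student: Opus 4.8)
The plan is to realize $X$ as an isomorphic copy of the affine line $\mathbb{A}^1$, using $f$ and $P$ as mutually inverse morphisms, and to read off the vanishing ideal from the coordinate ring. Write $J = \langle x_1 - f_1(P(x)), \ldots, x_n - f_n(P(x)) \rangle$ for the candidate ideal on the right-hand side, and let $\phi \colon \mathbb{C}[x_1,\ldots,x_n] \to \mathbb{C}[t]$ be the $\mathbb{C}$-algebra homomorphism $x_i \mapsto f_i(t)$ obtained by pulling back along $f$. Since ${\rm im}\, f$ is Zariski dense in $X$ and $\mathbb{C}$ is infinite, a polynomial vanishes on $X$ iff $\phi$ kills it, so $I(X) = \ker \phi$; the whole problem is thus to prove $\ker \phi = J$ and that the quotient is $\mathbb{C}[t]$.

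First I would check the easy inclusion $J \subseteq \ker \phi$: applying $\phi$ to the $i$-th generator gives $f_i(t) - f_i\big(P(f_1(t),\ldots,f_n(t))\big) = f_i(t) - f_i(t) = 0$, using the hypothesis $P(f(t)) = t$. This already shows $J \subseteq I(X)$.

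The core of the argument is the reverse inclusion $\ker \phi \subseteq J$, which I would obtain by computing the quotient ring $R = \mathbb{C}[x_1,\ldots,x_n]/J$ outright. Modulo $J$ each coordinate satisfies $x_i \equiv f_i(P(x))$, so, writing $\tau$ for the image of $P(x)$ in $R$, every generator equals $f_i(\tau)$; hence the single element $\tau$ generates $R$, i.e. $R = \mathbb{C}[\tau]$. Consider the surjection $\psi \colon \mathbb{C}[t] \to R$, $t \mapsto \tau$, and the map $\overline{\phi} \colon R \to \mathbb{C}[t]$ induced by $\phi$ (well defined by the previous paragraph). Because $\overline{\phi}(\tau) = P(f_1(t),\ldots,f_n(t)) = t$, the composite $\overline{\phi}\circ\psi$ is the identity on $\mathbb{C}[t]$; therefore $\psi$ is injective as well as surjective, hence an isomorphism, and $\overline{\phi}$ is its inverse. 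Consequently $\phi$ factors as the quotient map $\mathbb{C}[x] \to R$ followed by an isomorphism, so $\ker \phi = J$ and $\mathbb{C}[x]/I(X) \cong \mathbb{C}[t]$.

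Finally, smoothness is immediate: the coordinate ring of $X$ is $\mathbb{C}[t]$, so $X \cong \mathbb{A}^1$ is a smooth affine curve; equivalently, $f \colon \mathbb{A}^1 \to X$ and $P|_X \colon X \to \mathbb{A}^1$ are mutually inverse morphisms. The main obstacle is the reverse inclusion $\ker \phi \subseteq J$: the containment $J \subseteq I(X)$ and smoothness come almost for free once the polynomial inverse $P$ is in hand, but showing that the naive generators already cut out the full prime ideal — equivalently that $R$ is reduced, irreducible, and isomorphic to $\mathbb{C}[t]$ — is where the hypothesis $P(f(t)) = t$ must really be used, through the identity $\overline{\phi}\circ\psi = {\rm id}$. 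I note in passing that the stated hypothesis $\mathbb{C}[f_1,\ldots,f_n] = \mathbb{C}[t]$ is in fact subsumed by $P(f) = t$, so it is not separately needed.
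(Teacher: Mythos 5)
Your proposal is correct and follows essentially the same route as the paper: both identify $I(X)$ with the kernel of the evaluation map $x_i \mapsto f_i(t)$ and show this kernel equals $J$ by exploiting the congruence $x_i \equiv f_i(P(x)) \bmod J$, so that the quotient ring is $\mathbb{C}[t]$. Your closing observation that the hypothesis $\mathbb{C}[f_1,\ldots,f_n]=\mathbb{C}[t]$ is implied by the existence of $P$ with $P(f)=t$ is also accurate.
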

\begin{proof}
Let $J$ be the right-hand side of \eqref{eq:J}. We claim that there is a short exact sequence
\[ 0 \longrightarrow J \longrightarrow \mathbb{C}[x_1, \ldots, x_n] \overset{\varphi}{\longrightarrow} \mathbb{C}[t] \longrightarrow 0 ,\]
where the $\mathbb{C}$-algebra homomorphism $\varphi: \mathbb{C}[x_1, \ldots, x_n] \longrightarrow \mathbb{C}[t]$ sends $x_i$ to $f_i(t)$. This is surjective because $P$ maps to $t$: $\varphi(P) = P(f_1(t), \ldots, f_n(t)) = t$. It remains to show that $\ker \varphi = J$. The inclusion $J \subset \ker \varphi$ is easily checked. For the other inclusion, note that for any polynomial $h \in \mathbb{C}[x_1, \ldots, x_n]$, we have $h - h(f_1(P), \ldots, f_n(P)) \in J$. This shows the desired implication $h \in \ker \varphi \Rightarrow h \in J$. We conclude that $\mathbb{C}[x_1, \ldots, x_n]/J \simeq \mathbb{C}[P]$ is an integral domain, so that $J$ is a one-dimensional prime ideal contained in $I(X)$. Since $I(X)$ is one-dimensional and prime as well, we have $J = I(X)$. 
\end{proof}

\begin{proof}[Proof of Theorem \ref{thm:maincurves}]
The existence of $P$ follows from Lemma \ref{lem:equaring}. The generators for $I(\mathcal{X}_{A,T})$ are found from Lemma \ref{lem:primeideal}. 
\end{proof}

\begin{corollary} 
If three entries of $A \in \mathbb{N}^{1 \times n}$ are pairwise coprime and $0 < a_1 \leq \cdots \leq a_n$, then the Chebyshev curve $\mathcal{X}_{A,T} \subset \mathbb{C}^n$ has degree $a_n$. 
\end{corollary}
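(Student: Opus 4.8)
The plan is to read off the degree directly from the definition recalled after Theorem~\ref{thm:maintoric}: $\deg \mathcal{X}_{A,T}$ is the number of points in which a generic affine hyperplane meets the curve. Write such a hyperplane as $L = \{x \in \mathbb{C}^n : c_0 + c_1 x_1 + \cdots + c_n x_n = 0\}$ with generic coefficients $c_0, \ldots, c_n$. Pulling $L$ back along the parametrization $\mathcal{T}_{A,T}(t) = (T_{a_1}(t), \ldots, T_{a_n}(t))$ turns the incidence condition into the univariate equation
\[ g(t) \, = \, c_0 + c_1 T_{a_1}(t) + \cdots + c_n T_{a_n}(t) \, = \, 0. \]
Since $0 < a_1 \leq \cdots \leq a_n$ and $T_k$ has degree $k$ with leading coefficient $2^{k-1}$, only the indices with $a_i = a_n$ contribute to the top-degree coefficient of $g$, which equals $2^{a_n - 1}\sum_{i : a_i = a_n} c_i$. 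For generic $c$ this is nonzero, so $\deg g = a_n$ and $g$ has exactly $a_n$ roots, counted with multiplicity.

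Next I would upgrade this to a count of \emph{distinct} points lying on the curve, using two facts from the setup. First, by Theorem~\ref{thm:maincurves} there is a polynomial $P$ with $P(T_{a_i}(t), T_{a_j}(t), T_{a_k}(t)) = t$; applying $P$ to an identity $\mathcal{T}_{A,T}(t_1) = \mathcal{T}_{A,T}(t_2)$ forces $t_1 = t_2$, so $\mathcal{T}_{A,T}$ is injective. Hence distinct roots of $g$ give distinct points of $\mathcal{X}_{A,T} \cap L$, and every intersection point lying in ${\rm im}\,\mathcal{T}_{A,T}$ comes from a unique root of $g$. Second, ${\rm im}\,\mathcal{T}_{A,T}$ is Zariski dense in the curve with finite complement $Z = \mathcal{X}_{A,T} \setminus {\rm im}\,\mathcal{T}_{A,T}$; a generic $L$ avoids the finite set $Z$, so all intersection points of $\mathcal{X}_{A,T}$ with $L$ lie in the image and are therefore accounted for by roots of $g$. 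Finally, the $a_n$ roots of $g$ are simple for generic $c$, since the discriminant of $g$ is a nonzero polynomial in the $c_i$ (already $c_0 + c_n T_{a_n}(t)$ has simple roots for generic $c_0, c_n$, so the discriminant does not vanish identically), and its nonvanishing locus is open and dense.

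Combining these, for generic $L$ the intersection $\mathcal{X}_{A,T} \cap L$ consists of exactly $a_n$ distinct points, one for each simple root of $g$, whence $\deg \mathcal{X}_{A,T} = a_n$.

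I expect the only genuinely delicate point to be the bookkeeping at the boundary: confirming that for generic $L$ every intersection point truly lies in ${\rm im}\,\mathcal{T}_{A,T}$ (so none are lost to $Z$) and that none are counted with multiplicity. Both reduce to standard genericity arguments — the finite set $Z$ and the discriminant locus are each proper closed subsets that a generic choice of $c$ avoids — together with the injectivity supplied by $P$. Everything else is the elementary degree computation for $g$.
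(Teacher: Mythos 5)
Your proposal is correct and follows essentially the same route as the paper: intersect with a generic hyperplane, pull back along $\mathcal{T}_{A,T}$ to the univariate equation $c_0 + c_1 T_{a_1}(t) + \cdots + c_n T_{a_n}(t) = 0$, and count its $a_n$ roots, using the polynomial $P$ from Theorem \ref{thm:maincurves} to get a bijection between roots and intersection points. The paper compresses your genericity bookkeeping (avoiding the boundary set $Z$ and ensuring simple roots) into the single observation that $P$ makes $\mathcal{T}_{A,T}$ a closed embedding, but the substance is the same.
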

\begin{proof}
The degree of $\mathcal{X}_{A,T}$ is the number of points in its intersection with a generic affine hyperplane $H = \{c_0 + c_1 x_1 + \cdots + c_n x_n = 0 \}$. Since $P(T_{a_i}(t),  T_{a_j}(t), T_{a_k}(t)) = t$, the map ${\cal T}_{A}: \mathbb{C} \rightarrow \mathbb{C}^n$ is a closed embedding. Hence, the intersection points $H \cap \mathcal{X}_{A,T}$ are in one-to-one correspondence with the complex roots of $c_0 + c_1 T_{a_1}(t) + \cdots + c_n T_{a_n}(t) = 0$. Since the coefficients $c_i$ are generic, there are $a_n$ such roots. 
\end{proof}

\begin{example}
    Let $A = [3~2~7]$ and consider the Chebyshev space curve ${\cal X}_{A,T}$ displayed in Figure \ref{fig:space_curve}, left.
    \begin{figure}[ht]
        \begin{subfigure}{.5\textwidth}
        \centering
        \includegraphics[height=4cm]{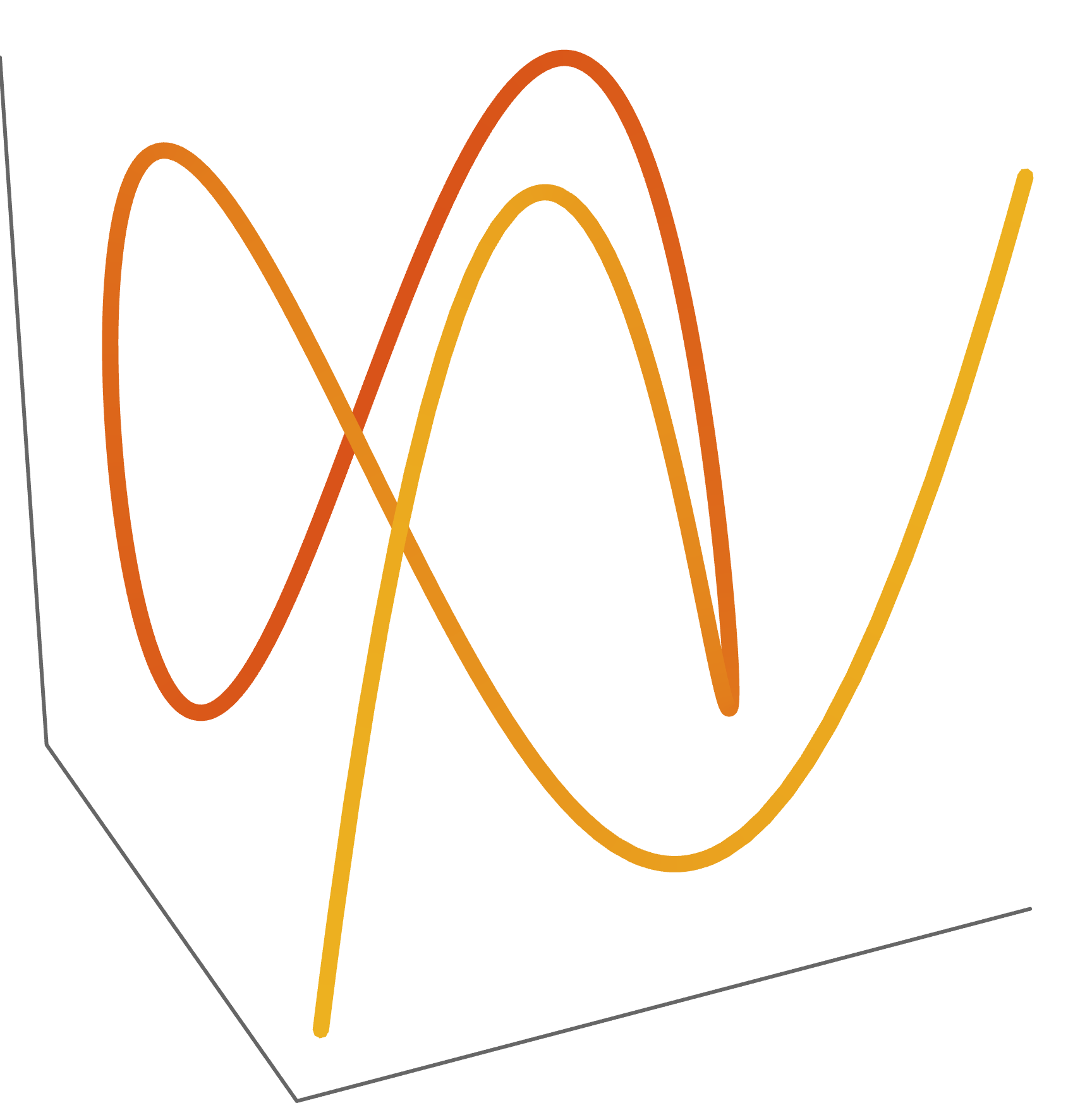}
        \caption*{Chebyshev $T$-curve $\mathcal{X}_{2,3,7,T}$}
        \end{subfigure}
        \begin{subfigure}{.5\textwidth}
        \centering
        \includegraphics[height=4cm]{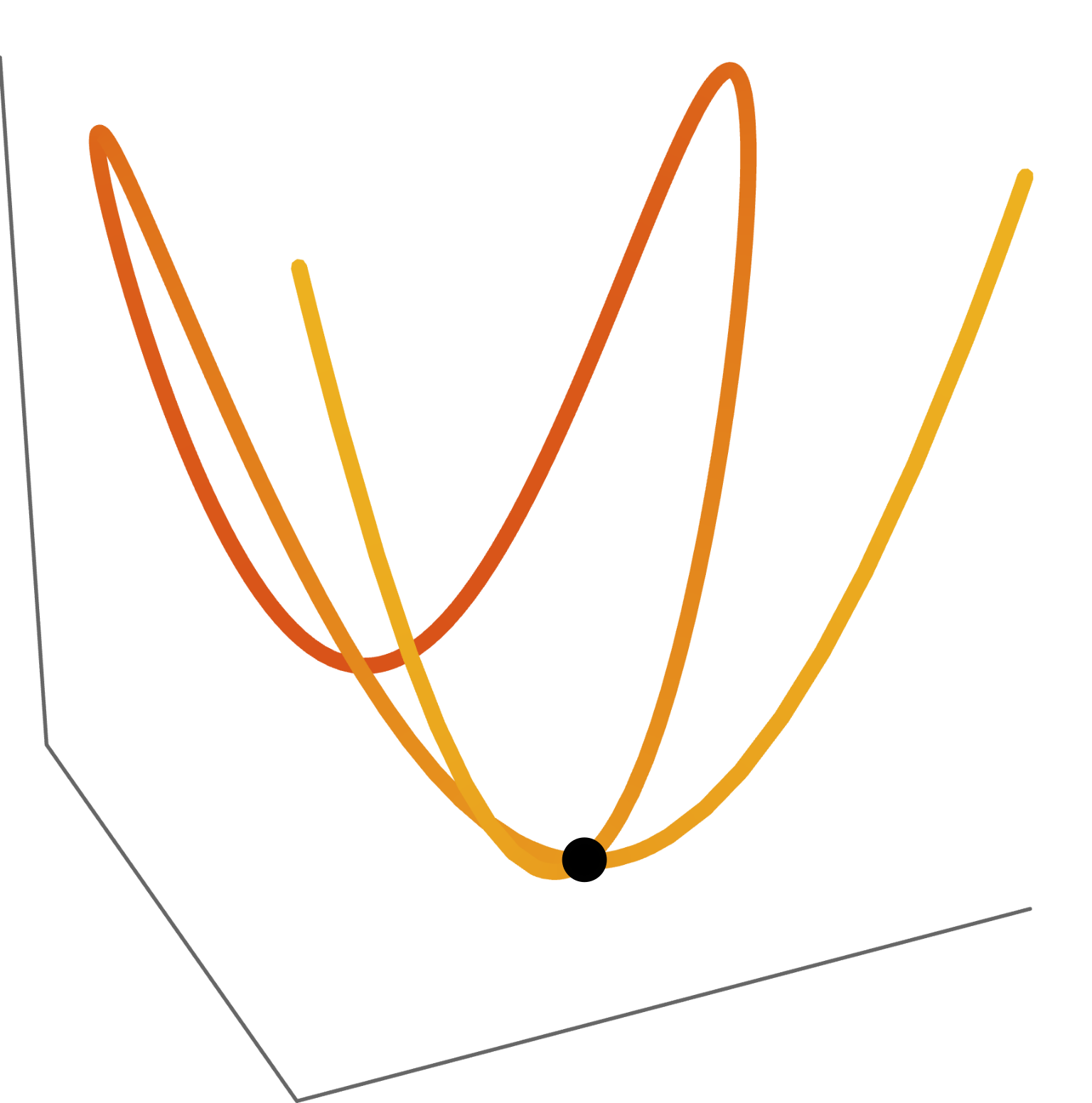}
        \caption*{Chebyshev $T$-curve $\mathcal{X}_{2,3,6,T}$}
        \end{subfigure}
        \caption{A smooth (left) and a singular (right) Chebyshev space curve.}
    \label{fig:space_curve}    
    \end{figure}
    For $P\in \mathbb{C}[x,y,z]$ from Lemma \ref{lem:primeideal} we can choose
    \[
    P(x,y,z) = 2 T_4(y) T_1(z) - T_5(x) = -16 x^5+16 y^4 z+20 x^3-16 y^2 z-5 x+2z.
    \]
    This polynomial satisfies $P(T_3(t),T_2(t),T_7(t)) = t$. Several such polynomials exist. E.g., 
    \[
    \widetilde{P}(x,y,z) = 2 T_{25}(y) T_7(z) - T_{33}(x) = -4294967296 x^{33} +2147483648 y^{25} z^7 + \ldots -33 x
    \]
    also satisfies $\tilde{P}(T_3(t),T_2(t),T_7(t)) = t$. This gives different generators for the same ideal.
\end{example}

\section{Tensor product Chebyshev polynomials} \label{sec:4}

We now switch to Chebyshev varieties of dimension $>1$. These correspond to polynomial equations $f_1 = \cdots = f_m = 0$, where each $f_i$ is a polynomial in $m > 1$ variables of the form \eqref{eq:fi}. For $a_j = (a_{1j}, \ldots, a_{mj}) \in \mathbb{N}^m$, we let ${\cal T}_{a_j,\otimes}$ be the \emph{tensor product Chebyshev polynomial}
\[ {\cal T}_{a_j, \otimes}(t) \, = \, T_{a_{1j}}(t_1) \cdot T_{a_{2j}}(t_2) \cdot \ldots \cdot T_{a_{mj}}(t_m). \]
This multivariate generalization of the Chebyshev polynomials $T_k$ has been used in numerical analysis, see for instance \cite{nakatsukasa2015computing,trefethen2017cubature}. This section studies the corresponding parametric varieties. 
The matrix $A = [a_1~\cdots~a_n]\in \mathbb{N}^{m\times n}$ defines the parametrization ${\cal T}_{A,\otimes}: \mathbb{C}^m \rightarrow \mathbb{C}^n$, with 
\begin{equation} \label{eq:tensormap} {\cal T}_{A,\otimes}(t) \, = \, ( \, {\cal T}_{a_1,\otimes}(t), {\cal T}_{a_2,\otimes}(t), \ldots, {\cal T}_{a_n,\otimes}(t) \, ).\end{equation}
The Zariski closure of its image is the \emph{(tensor product) Chebyshev variety} ${\cal X}_{A,\otimes}$. 
\begin{remark} \label{rem:densechoices}
    When $f_i$ is a Chebyshev approximation of some more complicated function $\phi_i$, the columns of the matrix $A$ often consist of all exponents of a fixed degree. Here ``degree'' can have different meanings. We present three examples following \cite[Section 6]{trefethen2017cubature}:
    \begin{itemize}
        \item \emph{Total} degree $k$: $A$ has columns $\{ a_j \in \mathbb{N}^m \, : \, \sum_{i=1}^m a_{ij} \leq k\}$. 
        \item \emph{Max} degree $k$: $A$ has columns $\{a_j \in \mathbb{N}^m \, : \, a_{ij} \leq k, \, i = 1, \ldots, m \}$. 
        \item \emph{Euclidean} degree $k$: $A$ has columns $\{a_j \in \mathbb{N}^m \, : \, \sum_{i=1}^m a_{ij}^2 \leq k^2 \}$.
    \end{itemize}
    This is illustrated in Figure \ref{fig:degrees}. 
    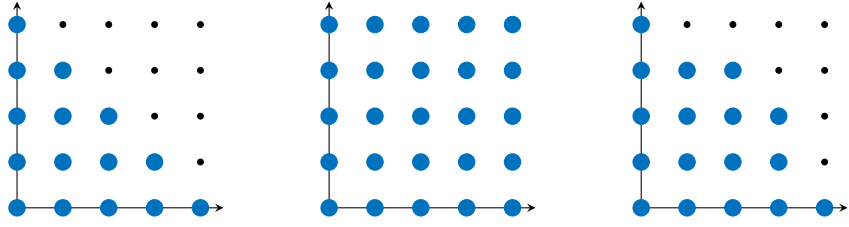
\begin{figure}
        \centering
        \begin{tikzpicture}[scale=1]
\begin{axis}[%
width=1.2in,
height=1.2in,
scale only axis,
xmin=-0.5,
xmax=4.5,
ymin=-0.5,
ymax=4.5,
ticks = none, 
ticks = none,
axis background/.style={fill=white},
axis line style={draw=none} 
]

\draw[->,>=stealth] (axis cs:0,0) -- (axis cs:4.5,0);
\draw[->,>=stealth] (axis cs:0,0) -- (axis cs:0,4.5);

\addplot[only marks,mark=*,mark size=1.1pt,black
        ]  coordinates {
   (0,0) (1,0) (2,0) (3,0) (4,0) (0,1) (1,1) (2,1) (3,1) (4,1) (0,2) (1,2) (2,2) (3,2) (4,2) (0,3) (1,3) (2,3) (3,3) (4,3) (0,4) (1,4) (2,4) (3,4) (4,4) 
};

\addplot[only marks,mark=*,mark size=3.1pt,myblue
        ]  coordinates {
  (0,0) (1,0) (2,0) (3,0) (4,0)
  (0,1) (1,1) (2,1) (3,1)
  (0,2) (1,2) (2,2)
  (0,3) (1,3)
  (0,4)
};

\end{axis}
\end{tikzpicture} 
        \quad \quad 
        \begin{tikzpicture}[scale=1]
\begin{axis}[%
width=1.2in,
height=1.2in,
scale only axis,
xmin=-0.5,
xmax=4.5,
ymin=-0.5,
ymax=4.5,
ticks = none, 
ticks = none,
axis background/.style={fill=white},
axis line style={draw=none} 
]

\draw[->,>=stealth] (axis cs:0,0) -- (axis cs:4.5,0);
\draw[->,>=stealth] (axis cs:0,0) -- (axis cs:0,4.5);

\addplot[only marks,mark=*,mark size=1.1pt,black
        ]  coordinates {
   (0,0) (1,0) (2,0) (3,0) (4,0) (0,1) (1,1) (2,1) (3,1) (4,1) (0,2) (1,2) (2,2) (3,2) (4,2) (0,3) (1,3) (2,3) (3,3) (4,3) (0,4) (1,4) (2,4) (3,4) (4,4) 
};

\addplot[only marks,mark=*,mark size=3.1pt,myblue
        ]  coordinates {
   (0,0) (1,0) (2,0) (3,0) (4,0) (0,1) (1,1) (2,1) (3,1) (4,1) (0,2) (1,2) (2,2) (3,2) (4,2) (0,3) (1,3) (2,3) (3,3) (4,3) (0,4) (1,4) (2,4) (3,4) (4,4) 
};

\end{axis}
\end{tikzpicture} 
        \quad \quad 
        \begin{tikzpicture}[scale=1]
\begin{axis}[%
width=1.2in,
height=1.2in,
scale only axis,
xmin=-0.5,
xmax=4.5,
ymin=-0.5,
ymax=4.5,
ticks = none, 
ticks = none,
axis background/.style={fill=white},
axis line style={draw=none} 
]

\draw[->,>=stealth] (axis cs:0,0) -- (axis cs:4.5,0);
\draw[->,>=stealth] (axis cs:0,0) -- (axis cs:0,4.5);

\addplot[only marks,mark=*,mark size=1.1pt,black
        ]  coordinates {
   (0,0) (1,0) (2,0) (3,0) (4,0) (0,1) (1,1) (2,1) (3,1) (4,1) (0,2) (1,2) (2,2) (3,2) (4,2) (0,3) (1,3) (2,3) (3,3) (4,3) (0,4) (1,4) (2,4) (3,4) (4,4) 
};

\addplot[only marks,mark=*,mark size=3.1pt,myblue
        ]  coordinates {
  (0,0) (1,0) (2,0) (3,0) (4,0)
  (0,1) (1,1) (2,1) (3,1)
  (0,2) (1,2) (2,2) (3,2)
  (0,3) (1,3) (2,3)
  (0,4)
};

\end{axis}
\end{tikzpicture} 
        \caption{All exponents of \emph{total} (left), \emph{max} (middle) and \emph{Euclidean} (right) degree 4.}
        \label{fig:degrees}
    \end{figure}
    Such matrices $A$ are perhaps the most relevant ones in approximation theory. However, keeping the analogy with toric geometry and applications like \cite{hubert2022sparse} in mind, we will state results for more general (and more \emph{sparse}) choices of $A$ too. 
\end{remark}
Our first result is a sufficient condition for ${\cal X}_{A,\otimes}$ to have the expected dimension.
\begin{theorem} \label{thm:dimtensor}
    If ${\rm rank}(A) = m$, then ${\cal X}_{A,\otimes}$ has dimension $m$. 
\end{theorem}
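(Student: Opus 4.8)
The plan is to prove the inequality $\dim \mathcal{X}_{A,\otimes} \geq m$; since $\mathcal{X}_{A,\otimes}$ is the closure of the image of a map out of $\mathbb{C}^m$, the reverse inequality is automatic and equality follows. Write $g_j(t) = \mathcal{T}_{a_j,\otimes}(t) = \prod_{i=1}^m T_{a_{ij}}(t_i)$ for the coordinate functions of the parametrization \eqref{eq:tensormap}. The dimension of $\mathcal{X}_{A,\otimes}$ equals the transcendence degree of $\mathbb{C}(g_1,\ldots,g_n)$ over $\mathbb{C}$, equivalently the generic rank of the Jacobian matrix $(\partial g_j / \partial t_i)$. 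So it suffices to produce $m$ of the functions $g_j$ that are algebraically independent, and by the Jacobian criterion (valid in characteristic $0$) this reduces to finding indices $j_1,\ldots,j_m$ for which the $m\times m$ determinant $\det(\partial g_{j_r}/\partial t_i)_{r,i}$ does not vanish identically.

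Since $\mathrm{rank}(A) = m$, I would first choose columns $a_{j_1},\ldots,a_{j_m}$ of $A$ that are linearly independent, so that the $m\times m$ matrix $B = (a_{i,j_r})_{i,r}$ satisfies $\det B \neq 0$. The tool for certifying nonvanishing of the Jacobian determinant is the initial (top-degree) form with respect to the total-degree grading. Recall that $T_k(t) = \gamma_k t^k + (\text{lower degree})$ with $\gamma_k = 2^{k-1}\neq 0$ for $k \geq 1$ (and $\gamma_0 = 1$); hence $g_j$ has a unique top-degree monomial, namely $\lambda_j\, t^{a_j}$ with $\lambda_j = \prod_i \gamma_{a_{ij}} \neq 0$, of total degree $|a_j| = \sum_i a_{ij}$. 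Consequently the top-degree part of $\partial g_{j_r}/\partial t_i$ is $\lambda_{j_r} a_{i,j_r}\, t^{a_{j_r} - e_i}$, of degree $|a_{j_r}| - 1$.

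I would then compute the initial form of the determinant. Expanding $\det(\partial g_{j_r}/\partial t_i)_{r,i}$ over permutations $\sigma\in S_m$, the term for $\sigma$ has top-degree part $\prod_r \lambda_{j_r} a_{\sigma(r),j_r}\, t^{a_{j_r}-e_{\sigma(r)}}$, and the monomial $t^{\sum_r a_{j_r} - \sum_r e_{\sigma(r)}} = t^{\sum_r a_{j_r} - \mathbf{1}}$ is the same for every $\sigma$, because $\sum_r e_{\sigma(r)} = \mathbf{1} = (1,\ldots,1)$ independently of $\sigma$. Moreover $\sum_r a_{j_r} \geq \mathbf{1}$ componentwise, since $\det B \neq 0$ forces every row of $B$ to be nonzero, so $t^{\sum_r a_{j_r}-\mathbf{1}}$ is a genuine monomial. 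Summing over $\sigma$, the top-degree part of the determinant equals $\big(\prod_r \lambda_{j_r}\big)\, t^{\sum_r a_{j_r} - \mathbf{1}}\cdot \det B$, which is nonzero. Hence $\det(\partial g_{j_r}/\partial t_i)\not\equiv 0$, the functions $g_{j_1},\ldots,g_{j_m}$ are algebraically independent, and $\dim\mathcal{X}_{A,\otimes} = m$.

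I expect the only delicate point to be the bookkeeping in this last step: a priori the leading monomials of distinct entries could combine so that the top-degree contributions cancel. The observation that resolves this is that, after differentiation, every permutation contributes the \emph{same} monomial $t^{\sum_r a_{j_r}-\mathbf{1}}$ with coefficient $\mathrm{sgn}(\sigma)\prod_r \lambda_{j_r} a_{\sigma(r),j_r}$, so the coefficients assemble exactly into $\big(\prod_r \lambda_{j_r}\big)\det B$ and no further cancellation occurs. One can make this even more transparent by using the Euler operators $t_i\,\partial_{t_i}$ in place of $\partial_{t_i}$, which supports every entry of a fixed row $r$ on the single top monomial $t^{a_{j_r}}$; degenerate entries with $a_{i,j_r}=0$ (where $\partial g_{j_r}/\partial t_i = 0$) carry coefficient $0$ and cause no trouble.
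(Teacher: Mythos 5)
Your proof is correct and follows essentially the same route as the paper's: both certify $\dim \mathcal{X}_{A,\otimes} = m$ via the Jacobian criterion, choosing $m$ linearly independent columns of $A$ and using the leading coefficient $2^{k-1}$ of $T_k$ to show that the top-degree part of the corresponding Jacobian minor is a nonzero monomial times $\det$ of the chosen submatrix. (The paper works with the Euler-scaled ``toric Jacobian'' $t_i\,\partial_{t_i}$ at points with nonzero coordinates, which is exactly the cleaner variant you mention at the end.)
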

\begin{proof}
    The (transpose of the) toric Jacobian matrix of ${\cal T}_{A,\otimes}$ is the $m \times n$ matrix 
    \[ J^\top \, = \, \left ( t_i \cdot \frac{\partial {\cal T}_{a_j,\otimes}}{\partial t_i} \right )_{i,j}.\]
    That is, $J$ is the usual Jacobian matrix up to scaling row $i$ by $t_i$. At a point $t$ with non-zero coordinates, the rank of $J$ equals that of the Jacobian. Hence, it suffices to show that for an open subset of such points, $J$ has rank $m$. Indeed, the generic rank of the Jacobian matrix equals the dimension of ${\cal X}_{A,\otimes}$. Let $I \subset \{1, \ldots, n \}$ be an $m$-element subset such that the submatrix $A_I$ consisting of the columns indexed by $I$ is invertible. We claim that 
    \begin{equation}
    \det(J_I^\top) \, = \, 2^{E} \cdot t^{\sum_{j \in I}a_j} \cdot \det(A_I) \, + \, \text{lower degree terms} \, \neq \, 0,
    \end{equation}
    with $E \geq 0$. Here $J_I^\top$ is the submatrix of $J^\top$ indexed by $I$. Hence, ${\rm rank}(J) = m$ for generic $t$. Our claim follows from the fact that $T_k(t) = 2^{\max\{0,k-1\}} \, t^k + \text{lower degree terms}$, so that 
    \[ (J^\top)_{ij} = a_{ij} \cdot 2^{\sum_{k} \max \{a_{kj} - 1, 0 \}} \cdot t^{a_j} + \text{lower degree terms}. \qedhere \]
\end{proof}
\begin{remark}
The converse statement of Theorem \ref{thm:dimtensor} does not hold. For instance, 
\[ A_1 \, = \, \begin{bmatrix}
    1 & 2 & 3 \\ 
    1 & 2 & 3
\end{bmatrix}, \quad \text{and} \quad A_2 \, = \, \begin{bmatrix}
    2 & 2 & 2 \\ 
    2 & 2 & 2
\end{bmatrix}\]
both have rank $1$. However, we have $\dim {\cal X}_{A_1,\otimes} = 2 \neq \dim {\cal X}_{A_2,\otimes} = 1$. 
\end{remark}

Next, we bound the degree of ${\cal X}_{A,\otimes}$. To this end, we consider the following points in $\mathbb{Z}^m$:
\[ B_j \, = \, \{-a_{1j}, a_{1j}\} \times \cdots \times \{-a_{mj}, a_{mj}\}, \quad j = 1, \ldots, n. \]
I.e., $B_j$ contains $\leq 2^m$ points which form the vertices of a box containing the origin in its relative interior. The convex hull of their union is the polytope $P_B = {\rm conv}(B_1 \cup \cdots \cup B_n)$. The intersection of $P_B$ with $\mathbb{R}^m_{\geq 0}$ contains the polytope $P_C$, defined as 
\[P_C = {\rm conv} \left ( \, {\rm Newt}({\cal T}_{a_1,\otimes}) \cup \cdots \cup {\rm Newt}({\cal T}_{a_n,\otimes}) \cup 0 \, \right ).\]
Here ${\rm Newt}(f)$ is the Newton polytope, i.e., the convex hull of the exponents appearing in $f$.
\begin{proposition} \label{prop:degtensor}
    If ${\rm dim} \, {\cal X}_{A,\otimes} = m$, then its degree is bounded by the following quantities: 
    \begin{equation} \label{eq:ineqtensor}
    \deg {\cal X}_{A,\otimes} \, \leq \, m! \cdot {\rm vol}(P_C) \, \leq \,   m! \cdot 2^{-m} \cdot {\rm vol}(P_B). \end{equation}
\end{proposition}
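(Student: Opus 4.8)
The plan is to bound $\deg {\cal X}_{A,\otimes}$ by the number of solutions of a generic linear section, to rewrite this as a count of solutions of the sparse system \eqref{eq:fi}, and to apply the Bernstein--Kushnirenko (BKK) bound; the inequality ${\rm vol}(P_C) \le 2^{-m}{\rm vol}(P_B)$ will then be a purely combinatorial consequence of the sign symmetry of $P_B$. First I would use that, since $\dim {\cal X}_{A,\otimes} = m$ and ${\cal T}_{A,\otimes}$ is generically finite (of degree $\geq 1$) onto its dense image, a generic affine-linear space $L = \{x : c_0 + Cx = 0\}$ of codimension $m$ meets ${\cal X}_{A,\otimes}$ in exactly $\deg {\cal X}_{A,\otimes}$ smooth points, all lying in ${\rm im}\,{\cal T}_{A,\otimes}$. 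Pulling these back, they correspond to the solutions $t \in \mathbb{C}^m$ of the system $f_i(t) = c_{i,0} + \sum_j c_{i,a_j}\,{\cal T}_{a_j,\otimes}(t) = 0$, $i = 1, \ldots, m$. As each intersection point has at least one preimage, $\deg {\cal X}_{A,\otimes}$ is at most the number of such $t$.

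Next I would show that for generic $L$ all these solutions lie in the torus $(\mathbb{C}^*)^m$. The image ${\cal T}_{A,\otimes}(\{t_i = 0\})$ sits inside a subvariety of ${\cal X}_{A,\otimes}$ of dimension at most $m-1$, which a generic codimension-$m$ space $L$ avoids; hence no intersection point, and thus no solution $t$, has a vanishing coordinate. Expanding each $f_i$ in monomials, its Newton polytope is contained in $P_C$, since ${\rm Newt}({\cal T}_{a_j,\otimes}) \subseteq P_C$ for every $j$ and the constant term contributes the origin. The BKK upper bound, which holds for \emph{arbitrary} (not necessarily generic) coefficients, then bounds the number of torus solutions by the mixed volume ${\rm MV}(P_C, \ldots, P_C) = m!\,{\rm vol}(P_C)$. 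This yields $\deg {\cal X}_{A,\otimes} \le m!\,{\rm vol}(P_C)$, the first inequality.

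For the second inequality I would argue geometrically. Because $T_k(t_i)$ involves only exponents in $[0,k]$, one has ${\rm Newt}({\cal T}_{a_j,\otimes}) \subseteq [0,a_{1j}] \times \cdots \times [0,a_{mj}] = {\rm conv}(B_j) \cap \mathbb{R}^m_{\geq 0}$, so that $P_C \subseteq P_B \cap \mathbb{R}^m_{\geq 0}$. The polytope $P_B$ is invariant under the group $\{\pm 1\}^m$ acting by coordinate sign flips, since each $B_j$ is; this group acts transitively on the $2^m$ orthants by volume-preserving reflections, so each orthant carries exactly $2^{-m}{\rm vol}(P_B)$ of the total volume. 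In particular ${\rm vol}(P_B \cap \mathbb{R}^m_{\geq 0}) = 2^{-m}{\rm vol}(P_B)$, and combining with the inclusion $P_C \subseteq P_B \cap \mathbb{R}^m_{\geq 0}$ gives ${\rm vol}(P_C) \le 2^{-m}{\rm vol}(P_B)$.

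I expect the main obstacle to be the first inequality, where two points need care: that the BKK bound still applies although the expanded $f_i$ have special, linearly dependent coefficients (resolved by invoking the upper-bound form of Bernstein's theorem, valid for all coefficients), and that the reduction to torus solutions is legitimate (resolved by the dimension count for the images of the coordinate hyperplanes). The volume comparison, by contrast, is a clean symmetry argument and should present no difficulty.
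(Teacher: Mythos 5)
Your proposal is correct and follows essentially the same route as the paper: bound the degree by the number of solutions of the pulled-back sparse system, apply the Kushnirenko/BKK upper bound with Newton polytopes contained in $P_C$, and deduce the second inequality from the sign symmetry of $P_B$ together with $P_C \subseteq P_B \cap \mathbb{R}^m_{\geq 0}$. You fill in two details the paper leaves implicit (the reduction to torus solutions and the validity of the bound for non-generic coefficients), while the paper additionally sketches a second, direct derivation of the weaker bound via the reparametrization $v \mapsto \frac{1}{2}(v^{a}+v^{-a})$, which it reuses later.
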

\begin{proof}
    The degree of ${\cal X}_{A,\otimes}$ is the number of solutions to $c_0 + C \cdot x = 0$ and $x \in {\cal X}_{A,\otimes}$, for generic $c_0, C$. This is at most the number of points $t \in \mathbb{C}^m$ such that $c_0 + C \cdot {\cal T}_{A,\otimes}(t) = 0$, with equality if $\deg {\cal T}_{A,\otimes} = 1$. Kushnirenko's theorem \cite[Theorem 3.16]{telen2022introduction} implies the inequality $\deg {\cal X}_{A,\otimes}  \leq  m! \cdot {\rm vol}(P_C)$. The second inequality, i.e., $m! \cdot {\rm vol}(P_C)  \leq  m! \cdot 2^{-m} \cdot {\rm vol}(P_B)$ follows from the symmetry of $P_B$, and the easy fact that $P_C \subset P_B \cap \mathbb{R}^m_{\geq 0}$. For what follows, it is instructive to also prove the inequality $\deg {\cal X}_{A,\otimes}  \leq m! \cdot 2^{-m} \cdot {\rm vol}(P_B)$ more directly. 
    
    Next to \eqref{eq:chebpol1}, the Chebyshev polynomials $T_k(t)$ can be characterized by the property
    \[ T_{k} \left ( \frac{v + v^{-1}}{2} \right ) \, = \, \frac{v^{k} +v^{-k}}{2}, \quad \text{for} \, \, \, v \in \mathbb{C} \setminus \{0\}. \]
    Therefore, ${\cal X}_{A,\otimes}$ is alternatively parametrized as follows:
    \[ \psi: (v_1, \ldots, v_m) \, \longmapsto \, \left( \, \prod_{i = 1}^m \left ( \frac{v_i^{a_{i1}} + v_i^{-a_{i1}}}{2} \right ), \, \ldots, \,  \prod_{i = 1}^m \left ( \frac{v_i^{a_{in}} + v_i^{-a_{in}}}{2} \right ) \, \right). \]
    The degree of ${\cal X}_{A,\otimes}$ is 
    the number of points $v \in (\mathbb{C} \setminus \{0\})^m$ such that $c_0 + C \cdot \psi(v) = 0$, divided by the degree of $\psi$. By Kushnirenko's theorem \cite[Theorem 3.16]{telen2022introduction}, the former is at most $m! \cdot {\rm vol}(P_B)$. Since $\psi(v_1, \ldots, v_m) = \psi(v_1^{\pm 1}, \ldots, v_m^{\pm 1})$, the degree of $\psi$ is at least $2^m$.
\end{proof}

\begin{example} \label{ex:tensor1}
Consider again the matrix $A$ from Example \ref{ex:pringles1}. The Chebyshev surface ${\cal X}_{A,\otimes}$ has degree 7. It is defined by the following degree 7 equation in 3 variables:
\[ -6 x^4 y+x^3 z-x^2 y \left(-48 y^4+22 y^2-3\right)-x y^2 \left(20 y^2-3\right) z+y^3
   \left(-16 y^4+8 y^2+2 z^2-1\right)\,=\,0\]
Its real picture is shown in Figure \ref{fig:sub-second}. The polytopes $P_B$ and $P_C$ are shown in Figure \ref{fig:tensor1}.
\begin{figure}
    \centering
     \begin{tikzpicture}[scale=0.8]
\begin{axis}[%
width=1.2in,
height=1.8in,
scale only axis,
xmin=-2.5,
xmax=2.5,
ymin=-3.5,
ymax=3.5,
ticks = none, 
ticks = none,
axis background/.style={fill=white},
axis line style={draw=none} 
]

\draw[->,>=stealth] (axis cs:0,0) -- (axis cs:2.5,0);
\draw[->,>=stealth] (axis cs:0,0) -- (axis cs:0,3.5);

\addplot[only marks,mark=*,mark size=1.1pt,black
        ]  coordinates {
   (0,0) (1,0) (2,0) (0,1) (1,1) (2,1) (0,2) (1,2) (2,2) (0,3) (1,3) (2,3)
   (0,0) (-1,0) (-2,0) (0,1) (-1,1) (-2,1) (0,2) (-1,2) (-2,2) (0,3) (-1,3) (-2,3)
   (0,0) (1,0) (2,0) (0,-1) (1,-1) (2,-1) (0,-2) (1,-2) (2,-2) (0,-3) (1,-3) (2,-3)
   (-1,-1) (-2,-1) (-1,-2) (-2,-2) (-1,-3) (-2,-3)
};

\addplot [color=myorange,solid,fill opacity=0.2,fill = myorange,forget plot]
  table[row sep=crcr]{%
 2 -3\\
2 3\\	
-2 3\\
-2 -3 \\
2 -3\\
};

\addplot [color=myblue,solid,fill opacity=0.4,fill = myblue,forget plot]
  table[row sep=crcr]{%
 0 0\\
0 3\\	
2 3\\
2 1 \\
1 0 \\
0 0\\
};

\addplot [very thick, color=myblue,solid,fill opacity=0.2,fill = myblue,forget plot]
  table[row sep=crcr]{%
 0 0 \\
 0 3\\
};

\addplot [very thick, color=myblue,solid,fill opacity=0.2,fill = myblue,forget plot]
  table[row sep=crcr]{%
 0 3 \\
 2 3 \\
};

\addplot [very thick, color=myblue,solid,fill opacity=0.2,fill = myblue,forget plot]
  table[row sep=crcr]{%
 2 3\\
 2 1 \\
};

\addplot [very thick, color=myblue,solid,fill opacity=0.2,fill = myblue,forget plot]
  table[row sep=crcr]{%
 2 1\\
 1 0 \\
};

\addplot [very thick, color=myblue,solid,fill opacity=0.2,fill = myblue,forget plot]
  table[row sep=crcr]{%
 1 0\\
 0 0 \\
};

\addplot [very thick, color=myorange,solid,fill opacity=0.2,fill = myorange,forget plot]
  table[row sep=crcr]{%
 2 -3 \\
 2 3 \\
};

\addplot [very thick, color=myorange,solid,fill opacity=0.2,fill = myorange,forget plot]
  table[row sep=crcr]{%
 2 3\\
 -2 3 \\
};

\addplot [very thick, color=myorange,solid,fill opacity=0.2,fill = myorange,forget plot]
  table[row sep=crcr]{%
 -2 3\\
 -2 -3 \\
};

\addplot [very thick, color=myorange,solid,fill opacity=0.2,fill = myorange,forget plot]
  table[row sep=crcr]{%
 -2 -3\\
 2 -3 \\
};

\end{axis}
\end{tikzpicture} 
     \quad \quad \quad
    \begin{tikzpicture}[scale=0.8]
\begin{axis}[%
width=1.2in,
height=1.8in,
scale only axis,
xmin=-2.5,
xmax=2.5,
ymin=-3.5,
ymax=3.5,
ticks = none, 
ticks = none,
axis background/.style={fill=white},
axis line style={draw=none} 
]

\draw[->,>=stealth] (axis cs:0,0) -- (axis cs:2.5,0);
\draw[->,>=stealth] (axis cs:0,0) -- (axis cs:0,3.5);

\addplot[only marks,mark=*,mark size=1.1pt,black
        ]  coordinates {
   (0,0) (1,0) (2,0) (0,1) (1,1) (2,1) (0,2) (1,2) (2,2) (0,3) (1,3) (2,3)
   (0,0) (-1,0) (-2,0) (0,1) (-1,1) (-2,1) (0,2) (-1,2) (-2,2) (0,3) (-1,3) (-2,3)
   (0,0) (1,0) (2,0) (0,-1) (1,-1) (2,-1) (0,-2) (1,-2) (2,-2) (0,-3) (1,-3) (2,-3)
   (-1,-1) (-2,-1) (-1,-2) (-2,-2) (-1,-3) (-2,-3)
};

\addplot[only marks,mark=*,mark size=3.1pt,mygreen
        ]  coordinates {
  (2,3) (-2,3) (2,-3) (-2,-3)
}; \label{greenpts}

\addplot[only marks,mark=*,mark size=3.1pt,mypurple
        ]  coordinates {
  (1,1) (-1,1) (-1,-1) (1,-1)
}; \label{purplepts}

\addplot[only marks,mark=*,mark size=3.1pt,myyellow
        ]  coordinates {
  (1,2) (-1,2) (-1,-2) (1,-2)
}; \label{yellowpts}

\end{axis}
\end{tikzpicture} 
    \quad \quad \quad
    \begin{tikzpicture}[scale=0.8]
\begin{axis}[%
width=1.2in,
height=1.8in,
scale only axis,
xmin=-2.5,
xmax=2.5,
ymin=-3.5,
ymax=3.5,
ticks = none, 
ticks = none,
axis background/.style={fill=white},
axis line style={draw=none} 
]

\draw[->,>=stealth] (axis cs:0,0) -- (axis cs:2.5,0);
\draw[->,>=stealth] (axis cs:0,0) -- (axis cs:0,3.5);

\addplot[only marks,mark=*,mark size=1.1pt,black
        ]  coordinates {
   (0,0) (1,0) (2,0) (0,1) (1,1) (2,1) (0,2) (1,2) (2,2) (0,3) (1,3) (2,3)
   (0,0) (-1,0) (-2,0) (0,1) (-1,1) (-2,1) (0,2) (-1,2) (-2,2) (0,3) (-1,3) (-2,3)
   (0,0) (1,0) (2,0) (0,-1) (1,-1) (2,-1) (0,-2) (1,-2) (2,-2) (0,-3) (1,-3) (2,-3)
   (-1,-1) (-2,-1) (-1,-2) (-2,-2) (-1,-3) (-2,-3)
};

\addplot[only marks,mark=*,mark size=3.1pt,myblue
        ]  coordinates {
  (0,0) (1,1) 
};

\addplot [very thick, color=myblue,solid,fill opacity=0.2,fill = myblue,forget plot]
  table[row sep=crcr]{%
 1 0\\
 1 2 \\
};

\addplot [color=myblue,solid,fill opacity=0.2,fill = myblue,forget plot]
  table[row sep=crcr]{%
 0 1\\
2 1\\	
2 3\\
0 3 \\
0 1\\
};

\end{axis}
\end{tikzpicture} 
    \caption{Left: the polygons $P_C \subset P_B$. Here $P_B$ is the convex hull of $B_1$ (\ref{yellowpts}), $B_2$ (\ref{purplepts}) and $B_3$ (\ref{greenpts}). Right: $P_C$ is the convex hull of a line segment, two points, and a square. }
    \label{fig:tensor1}
\end{figure}
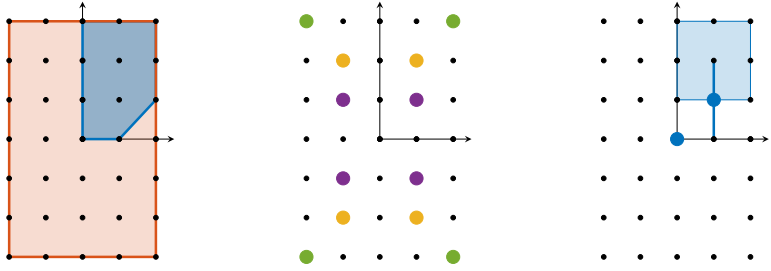
The chain of inequalities \eqref{eq:ineqtensor} for this example reads $7 \leq 11 \leq 12$.
\end{example}
While Example \ref{ex:tensor1} shows that the bounds of Proposition \ref{prop:degtensor} may be strict, the inequalities in the statement are in fact equalities if $A$ is \emph{sufficiently dense}, meaning that
\begin{equation} \label{eq:suffdense} 
\text{ for all } a_j \in A \text{ and all } i = 1, \ldots, m, \text{ we have } a_j - e_i \in A \text{ if } a_{ij} > 0. \end{equation}
Here $e_i$ is the $i^{th}$ standard basis vector of $\mathbb{R}^m$, and ``$a_j \in A$'' means ``$a_j$ is among the columns of $A$''. Notice that this property holds for any of the choices for $A$ in Remark \ref{rem:densechoices}. 
\begin{theorem} \label{thm:suffdense}
    If $A$ has rank $m$ and it satisfies \eqref{eq:suffdense}, then $P_A = P_C = P_B \cap \mathbb{R}^m_{\geq 0}$ and 
    \[ \deg {\cal X}_{A,\otimes} \, = \, m! \cdot {\rm vol}(P_A) \, = \,   m! \cdot 2^{-m} \cdot {\rm vol}(P_B). \]
\end{theorem}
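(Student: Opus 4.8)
The plan is to establish the chain of polytope identities $P_A = P_C = P_B \cap \mathbb{R}^m_{\geq 0}$ first, and then invoke Proposition~\ref{prop:degtensor} together with a matching lower bound on the degree to force all inequalities in \eqref{eq:ineqtensor} to be equalities. For the polytope identities, recall that $\mathrm{Newt}({\cal T}_{a_j,\otimes})$ is the box $[0,a_{1j}] \times \cdots \times [0,a_{mj}]$ together with all its sub-box lattice structure, so its vertices are the $2^m$ sign-free corners; in particular $a_j$ itself and $0$ are among its vertices. Hence $P_A \subseteq P_C$ trivially, since every column $a_j$ is a vertex of its own Newton box. The reverse inclusion $P_C \subseteq P_A$ is exactly where the density hypothesis \eqref{eq:suffdense} enters: I would show that every vertex of each box $\mathrm{Newt}({\cal T}_{a_j,\otimes})$ — i.e.\ every point obtained from $a_j$ by zeroing out some subset of coordinates — lies in $A$, hence in $P_A$. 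This follows by downward induction on $\sum_i a_{ij}$: condition \eqref{eq:suffdense} lets me decrement any positive coordinate of a column and stay inside $A$, so iterating produces all the sub-corners of the box, eventually reaching $0$.

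Granting $P_A = P_C$, the identity $P_C = P_B \cap \mathbb{R}^m_{\geq 0}$ then reduces to a symmetry argument. Since each $B_j$ is the full set of sign-corners $\{\pm a_{1j}\} \times \cdots \times \{\pm a_{mj}\}$, the polytope $P_B$ is invariant under the sign-reflection group $(\mathbb{Z}/2)^m$ acting by coordinate negations. I would argue that $P_B \cap \mathbb{R}^m_{\geq 0}$ is the convex hull of the nonnegative-orthant corners of all the $B_j$, which are precisely the vertices of the Newton boxes, so that $P_B \cap \mathbb{R}^m_{\geq 0} = P_C$; the density assumption guarantees the origin is included, so no facet of $P_B$ cuts across the orthant in a way that creates extra vertices. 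The second volume identity $m! \cdot \mathrm{vol}(P_A) = m! \cdot 2^{-m} \cdot \mathrm{vol}(P_B)$ is then immediate: the $2^m$ sign-reflections tile $P_B$ by $2^m$ congruent copies of $P_A = P_B \cap \mathbb{R}^m_{\geq 0}$, up to lower-dimensional overlaps on the coordinate hyperplanes, giving $\mathrm{vol}(P_B) = 2^m \cdot \mathrm{vol}(P_A)$.

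For the degree equality, Proposition~\ref{prop:degtensor} already gives $\deg {\cal X}_{A,\otimes} \leq m! \cdot \mathrm{vol}(P_A)$ once the polytope identities are in place. It remains to prove the reverse inequality. Here I would return to the alternative parametrization $\psi$ in terms of $v \in (\mathbb{C}^*)^m$ from the proof of Proposition~\ref{prop:degtensor}, whose Laurent-monomial supports have Newton polytope exactly $P_B$. The upper bound came from Kushnirenko combined with the fact that $\deg \psi \geq 2^m$. To get equality, I would show that for generic coefficients the Kushnirenko bound $m! \cdot \mathrm{vol}(P_B)$ is attained (which holds because the supports $B_j$ are generic enough — sufficient density ensures the relevant mixed/normalized volume is realized by nondegenerate solutions in the torus), and that $\deg \psi$ is \emph{exactly} $2^m$ rather than larger. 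The latter is the crux: I must verify that the only coordinate-wise relations identifying fibers of $\psi$ are the sign-flips $v_i \mapsto v_i^{-1}$, so that the fiber has size exactly $2^m$ generically. This reduces to checking that $\mathbb{Z}A = \mathbb{Z}^m$, equivalently that $A$ generates the full lattice, which the rank-$m$ plus density hypotheses should supply.

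The step I expect to be the main obstacle is pinning down $\deg \psi = 2^m$ exactly. Bounding it below by $2^m$ via the sign-flip symmetry is easy, but ruling out \emph{additional} identifications — which would inflate $\deg \psi$ and deflate the degree below $m! \cdot \mathrm{vol}(P_A)$ — requires controlling the lattice $\mathbb{Z}A$ and the generic fiber structure of the monomial map underlying $\psi$. I would handle this by factoring $\psi$ through the toric map $v \mapsto (v^{\pm a_1}, \ldots)$ and the quotient by $(\mathbb{Z}/2)^m$, then arguing that the density condition \eqref{eq:suffdense} forces $A$ to contain enough unit-step columns (each $e_i$ arises by repeatedly decrementing a column reaching a standard basis vector) to make $\mathbb{Z}A = \mathbb{Z}^m$, which collapses any further toric fiber and leaves only the sign group.
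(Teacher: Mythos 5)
Your treatment of the polytope identities is argued along reasonable lines (the paper itself leaves that part to the reader), but the degree part of your plan has a genuine gap --- in fact two. First, you claim that for generic $C, c_0$ the Kushnirenko bound $m!\,{\rm vol}(P_B)$ is attained by the system $c_0 + C\cdot \psi(v) = 0$ ``because the supports $B_j$ are generic enough.'' But this system does \emph{not} have generic coefficients for its support: expanding $\psi_j(v) = \prod_i \frac{v_i^{a_{ij}}+v_i^{-a_{ij}}}{2}$ gives $2^{-m}\sum_{b \in B_j} v^{b}$, so all $2^m$ monomials indexed by $B_j$ are forced to carry the \emph{same} coefficient $c_{ij}2^{-m}$. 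Kushnirenko equality for such tied coefficients is exactly what can fail, and it does fail once density is dropped: in Example \ref{ex:tensor1} the number of torus solutions is $\deg\psi \cdot \deg{\cal X}_{A,\otimes} = 4\cdot 7 = 28$, strictly below $m!\,{\rm vol}(P_B) = 48$. So ``attainment of Kushnirenko'' is not a genericity remark; it is the substance of the theorem and needs an argument that actually uses \eqref{eq:suffdense}. Second, your argument that $\deg\psi = 2^m$ is also incomplete: $\psi_j$ is \emph{not} a function of $v^{a_j}$ alone (compare $\frac{(v_1+v_1^{-1})(v_2+v_2^{-1})}{4}$ with $\frac{v_1v_2+v_1^{-1}v_2^{-1}}{2}$), so $\psi$ does not factor through the toric map $v \mapsto (v^{\pm a_j})_j$ as you describe. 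It factors through the monomial map $v \mapsto (v^b)_{b \in \bigcup_j B_j}$ followed by the \emph{linear projection} that sums each group $B_j$; the lattice condition $\mathbb{Z}A = \mathbb{Z}^m$ makes the monomial map injective but says nothing about whether that linear projection identifies further points. Equivalently, $\psi = {\cal T}_{A,\otimes}\circ\rho$ with $\rho(v) = \bigl(\tfrac{v_i+v_i^{-1}}{2}\bigr)_i$ exactly $2^m$-to-one, so $\deg\psi = 2^m$ if and only if ${\cal T}_{A,\otimes}$ is generically injective --- and since ${\cal T}_{A,\otimes}$ is not a monomial map, the lattice index alone does not settle this.

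Both gaps are closed by the single observation your plan is missing, which is essentially the paper's entire proof: condition \eqref{eq:suffdense} implies that the linear span of $\{{\cal T}_{a_j,\otimes}\}_{j}$ equals the linear span of the monomials $\{t^{a_j}\}_{j}$ (each Chebyshev product expands into monomials $t^b$ with $b$ obtained from $a_j$ by lowering coordinates, all of which lie in $A$ by density, and the change of basis is triangular with nonzero diagonal). Hence ${\cal X}_{A,\otimes}$ is the image of the toric variety ${\cal Y}_A$ under an invertible linear map of $\mathbb{C}^n$, so $\deg {\cal X}_{A,\otimes} = \deg {\cal Y}_A = m!\,{\rm vol}(P_A)/\deg\Phi_A$ by Theorem \ref{thm:maintoric}, and $\deg\Phi_A = 1$ because density plus ${\rm rank}(A)=m$ forces $e_1,\ldots,e_m \in A$. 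I recommend replacing your two unproven claims by this reduction; if you prefer to stay in the $v$-coordinates, you can derive both claims from the same span identity, but you cannot avoid it.
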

\begin{proof}
    The equality of polytopes is straightforward and left to the reader. Because of \eqref{eq:suffdense}, the vector space spanned by $\{ {\cal T}_{a_j, \otimes} \}_{j = 1, \ldots, n}$ is equal to the span of $\{ t^{a_j} \}_{j = 1, \ldots, n}$. That means ${\cal X}_{A,\otimes}$ is isomorphic to the toric variety ${\cal Y}_A$ via a linear change of coordinates. Therefore $\deg {\cal X}_{A,\otimes} = \deg {\cal Y}_A$, and \eqref{eq:degtoric} gives the desired degree formula (\eqref{eq:suffdense} implies $\deg \Phi_A = 1$). 
\end{proof}
The sufficient condition \eqref{eq:suffdense} is not necessary for the conclusion of Theorem \ref{thm:suffdense}.
\begin{example}\label{ex:m3n6}
    We consider the $3$-dimensional Chebyshev variety ${\cal X}_{A,\otimes} \subset \mathbb{C}^6$ obtained from 
    \begin{equation} \label{eq:Adim3}
    A \, = \, \begin{bmatrix}
        1 & 0 & 0 & 2 & 2 & 0\\
        0 & 1 & 0 & 2 & 0 & 2\\
        0 & 0 & 1 & 0 & 2 & 2
    \end{bmatrix}.
    \end{equation}
    Using any computer algebra system, like for instance \texttt{Macaulay2} \cite{M2} or \texttt{Oscar.jl} \cite{OSCAR}, one can verify that ${\cal X}_{A,\otimes}$ has degree 28 and its ideal  is generated by
    \[\begin{matrix}
        2\,x_{2}^{2}x_{5}-2\,x_{1}^{2}x_{6}-x_{5}+x_{6}, \quad \,2\,x_{3}^{2}x_{4}-2\,x_{1}^{2}x_{6}-x_{4}+x_{6}, \quad \,4\,x_{2}^{2}x_{3}^{2}-2\,x_{2}^{2}-2\,x_{3}^{2}-x_{6}+1,\\
        \,4\,x_{1}^{2}x_{3}^{2}-2\,x_{1}^{2}-2\, x_{3}^{2}-x_{5}+1, \quad \,4\,x_{1}^{2}x_{2}^{2}-2\,x_{1}^{2}-2\,x_{2}^{2}-x_{4}+1.
    \end{matrix}
    \]
    We have $P_A = P_C = P_B \cap \mathbb{R}^m_{\geq 0}$, and all inequalities in Proposition \ref{prop:degtensor} hold as equalities.
\end{example}
    We now improve the bounds from Proposition \ref{prop:degtensor} for the case $m=2$. We write
    \begin{equation} \label{eq:Asurface} A \, = \, \begin{bmatrix}
        a_1 & a_2 & a_3 & \cdots & a_n\\ b_1 & b_2 & b_3 & \cdots & b_n
    \end{bmatrix} \, \, \in \, \mathbb{N}^{2\times n}.\end{equation}
    Without loss of generality, we assume that $a_1 \leq a_2 \leq \cdots \leq a_n$. 
    Let $\sigma \in S_n$ be a permutation for which $b_{\sigma(1)} \leq b_{\sigma(2)} \leq \cdots \leq b_{\sigma(n)}$.
    We define $\alpha$ to be the largest $0 \leq j \leq n$ such that $b_j \neq b_n$, where $b_0$ is defined to be $0$. Similarly, $\beta$ is the largest $j$ such that $a_{\sigma(j)} \neq a_{\sigma(n)}$, and $a_{\sigma(0)} =0$. Our running example $A = \left [ \begin{smallmatrix}
        1 & 1 & 2 \\ 2 & 1 & 3
    \end{smallmatrix} \right ]$ has 
    $ \alpha = 2, \,\sigma = (2,1,3), \, \beta = 2$.
\begin{theorem} \label{thm:betterbound}
    If $A$ from \eqref{eq:Asurface} has rank $2$, then the degree of the tensor product Chebyshev surface ${\cal X}_{A,\otimes} \subset \mathbb{C}^n$ is at most $\frac{1}{4}(2 \, {\rm vol}(P_B) - 4 b_n (a_n-a_\alpha) - 4 a_{\sigma(n)}(b_{\sigma(n)}-b_{\sigma(\beta)}))$.
\end{theorem}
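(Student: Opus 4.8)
The plan is to refine the second, more hands-on argument in the proof of Proposition~\ref{prop:degtensor}. Recall from there the parametrization $\psi(v_1,v_2)$ whose $j$-th coordinate is $\tfrac14(v_1^{a_j}+v_1^{-a_j})(v_2^{b_j}+v_2^{-b_j})$, and the identity $\deg{\cal X}_{A,\otimes}=N/\deg\psi$ with $\deg\psi\ge 4$, where $N$ is the number of solutions in $(\mathbb{C}^*)^2$ of the generic system $c_0+C\cdot\psi(v)=0$. Kushnirenko's theorem bounds $N$ by $2\,{\rm vol}(P_B)$, but this estimate is never attained: each $\psi_j$ is supported only on the four corners of the box $B_j$, so on the edges of $P_B$ where a single column dominates, the facial subsystem is forced to be degenerate for \emph{every} choice of coefficients. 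The aim is to quantify this structural deficiency and subtract it.

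First I would compactify $(\mathbb{C}^*)^2$ to the projective toric surface $X_{P_B}$ of $P_B$. For generic $c_0,C$ the two equations define curves $C_1,C_2$ with no common component, both of class the polytope divisor $D_{P_B}$, so their total intersection number is $D_{P_B}^2=2\,{\rm vol}(P_B)$. Splitting this number into its torus part and its part carried by the toric boundary gives $N\le 2\,{\rm vol}(P_B)-D$, where $D$ is the total intersection multiplicity of $C_1,C_2$ along the boundary divisors. Since $\deg{\cal X}_{A,\otimes}\le N/4$, it suffices to prove $D\ge 4b_n(a_n-a_\alpha)+4a_{\sigma(n)}(b_{\sigma(n)}-b_{\sigma(\beta)})$.

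The heart of the argument is a local computation at the boundary divisors in the directions $(\pm1,0)$ and $(0,\pm1)$. Take the right edge, with normal $(1,0)$, and suppose $a_n$ is attained by the unique column $n$. Then the initial form of each $f_i$ is $\tfrac14C_{in}\,v_1^{a_n}(v_2^{b_n}+v_2^{-b_n})$, so both curves meet this divisor exactly at the $2b_n$ points $\zeta$ with $\zeta^{2b_n}=-1$, all interior to the divisor and away from the fixed points. To compute the local multiplicity at such a $\zeta$, set $\phi(v_2):=v_2^{b_n}+v_2^{-b_n}$, substitute $v_1=s^{-1}$, and expand: every column $j$ with $b_j=b_n$ contributes a multiple of $\phi$ at order $s^{a_n-a_j}$, whereas the first term \emph{not} proportional to $\phi$ comes from the last column in $a$-order with $b_j\ne b_n$, namely column $\alpha$, and appears only at order $s^{a_n-a_\alpha}$ (no earlier non-$\phi$ term can occur, since any column with $a_j>a_\alpha$ has index exceeding $\alpha$ and hence $b_j=b_n$). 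As $\phi$ has a simple zero at $\zeta$ and the prefactor multiplying $\phi$ is a series in $s$ with constant term $\tfrac14C_{in}\ne0$, solving $f_i=0$ for $v_2=g_i(s)$ forces $g_i(s)-\zeta=O(s^{a_n-a_\alpha})$ for both $i$; therefore $g_1-g_2=O(s^{a_n-a_\alpha})$ and the local multiplicity is at least $a_n-a_\alpha$. Summing over the $2b_n$ points on the right edge and the $2b_n$ on the left yields a contribution of at least $4b_n(a_n-a_\alpha)$ to $D$, and the symmetric analysis in the vertical direction, exchanging the roles of $a$ and $b$, of $\alpha$ and $\beta$, and of $n$ and $\sigma(n)$, contributes at least $4a_{\sigma(n)}(b_{\sigma(n)}-b_{\sigma(\beta)})$. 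The four divisors are distinct and the relevant points avoid their shared fixed points, so there is no double counting and the two bounds add. When $a_n$ (resp.\ $b_{\sigma(n)}$) is attained by more than one column the edge is no longer degenerate, but then two distinct columns share the maximal $a$ (resp.\ $b$), forcing $a_\alpha=a_n$ (resp.\ $b_{\sigma(\beta)}=b_{\sigma(n)}$), so the matching term vanishes and the estimate is unaffected.

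I expect the main obstacle to be the local multiplicity computation together with its genericity bookkeeping: one must ensure that for generic $C$ all $2b_n$ points are simple zeros of $\phi$ lying off the fixed points, that the prefactor is indeed a unit at each of them, and that the various columns cannot conspire to push the order of vanishing below $a_n-a_\alpha$. The second technical point is making $N\le 2\,{\rm vol}(P_B)-D$ precise—checking that $C_1,C_2$ share no component and that the boundary contributions are exactly the multiplicities $I_\zeta$ computed above, working on a resolution of $X_{P_B}$ if $P_B$ fails to be smooth. Both are routine given the explicit initial forms, but require care.
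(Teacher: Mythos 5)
Your argument is correct and is essentially the paper's own proof with the ``standard local analysis'' spelled out: the paper likewise compactifies to $X_{P_B}$, invokes Kushnirenko's bound $2\,{\rm vol}(P_B)$, and subtracts $2b_n$ boundary solutions of multiplicity at least $a_n-a_\alpha$ on each of the facets $p=\pm a_n$ (and symmetrically at least $a_{\sigma(n)}(b_{\sigma(n)}-b_{\sigma(\beta)})$ from each of $q=\pm b_{\sigma(n)}$). One small slip in your final remark: when the maximal $a$-value is attained by several columns that all share $b_j=b_n$ (e.g.\ $A=\left[\begin{smallmatrix}1&2&2\\1&3&3\end{smallmatrix}\right]$) one can still have $a_\alpha<a_n$, so the corresponding term does \emph{not} vanish as you claim --- but your main local computation applies verbatim in that case (the unit prefactor just becomes $\tfrac14\sum_{j:\,a_j=a_n}C_{ij}$), so the bound is unaffected.
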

\begin{proof}
    This proof requires some projective toric geometry. We have seen in the proof of Proposition \ref{prop:degtensor} that $\deg {\cal X}_{A,\otimes}$ is at most 1/4 times the number of points $v \in (\mathbb{C} \setminus \{0\})^2$ such that $c_0 + C \cdot \psi(v) = 0$. Kushnirenko's number $2 \, {\rm vol}(P_B)$ counts the number of points defined by the corresponding ``homogeneous'' equations on the toric compactification $X_{P_B}$ of $(\mathbb{C} \setminus \{0\})^2$ associated with $P_B$, see \cite[Section 3.8, Example 6.30]{telen2022introduction}. We may subtract from this quantity all solutions that lie in the boundary $X_{P_B} \setminus (\mathbb{C} \setminus \{0\})^2$. Let $p,q$ be coordinates on $\mathbb{R}^2 \supset P_B$. From a standard local analysis, one sees that the face $P_B\cap \{p=a_n\}$ corresponds to a torus orbit with $2b_n$ spurious solutions, each of them having multiplicity at least $(a_n-a_\alpha)$. The opposite face $P_B\cap \{p=-a_n\}$ contributes at least another $2b_n(a_n-a_\alpha)$ points, which gives the term $- 4 b_n (a_n-a_\alpha)$ in our bound. Similarly, the faces $P_B\cap \{q=\pm b_{\sigma(n)}\}$ lead to at least $4 a_{\sigma(n)}(b_{\sigma(n)}-b_{\sigma(\beta)})$ points in $X_{P_B} \setminus (\mathbb{C} \setminus \{0\})^2$, counting multiplicities.
\end{proof}

\begin{example}
    The bound in Theorem \ref{thm:betterbound} predicts the correct number 7 for the degree of ${\cal X}_{A,\otimes}$ from our running example, see Example \ref{ex:tensor1}. Such efficient bounds are useful for counting solutions to bivariate Chebyshev equations $f_1=f_2=0$ and for finding implicit equations like that in Example \ref{ex:tensor1} via interpolation. 
\end{example}

The bound from Theorem \ref{thm:betterbound} is significantly better than that of Proposition \ref{prop:degtensor} for many Chebyshev surfaces. However, it is still a strict upper bound in general. The reason can be that the degree of $\psi$ is greater than 4 (try $A = \left [ \begin{smallmatrix}
    2 & 0 & 2\\0&2&2
\end{smallmatrix}\right ]$) or that the multiplicities of the ``spurious solutions'' in our proof are underestimated (try $A = \left [ \begin{smallmatrix}
    1 & 2 & 4\\0&2&4
\end{smallmatrix}\right ]$). The combinatorial strategy used in our proof can be adapted to predict the correct degree on a case-by-case~basis.

\section{Cosines of linear forms} \label{sec:5}

The Chebyshev polynomials satisfy $T_k ( \cos(u)) = \cos(k u)$. A $T$-curve from Section \ref{sec:3} can therefore be interpreted as the image of the line through the origin parametrized by $(a_1\, u, a_2 \, u, \ldots, a_n \, u)$ under the coordinate-wise cosine map. This section generalizes this to higher $m$. For $m\geq 1$ and $a_j = (a_{1j}, \ldots,a_{mj}) \in \mathbb{N}^m$, we define the function ${\cal T}_{a_j,\cos{}}$ as follows:
\[
{\cal T}_{a_j,\cos{}}(u) \, = \, \cos(a_j \cdot u) \,  = \,  \cos(a_{1j}u_1 + \ldots + a_{mj}u_m).
\]
Using this definition, we construct and study a second type of parametric varieties. Given a matrix $A = [a_1~\cdots~a_n] \in \mathbb{N}^{m\times n}$, define the map ${\cal T}_{A,\cos{}} : \mathbb{C}^m \to \mathbb{C}^n$, with
\[
{\cal T}_{A,\cos{}}(u) \, = \, (\, \cos(a_1 \cdot u),\, \ldots,\, \cos(a_n \cdot u) \,).
\]
The Zariski closure of the image is the \emph{(cosine) Chebyshev variety} $\Xcos$.
Linear sections of these Chebyshev varieties correspond to equations $f_1 = \ldots = f_m = 0$, where the $f_i$ are of the form $c_0 + \sum_{j=1}^n c_{i,j} \cos(a_{j}\cdot u)$.
In the case $m=1$, we have ${\cal X}_{A,\cos}={\cal X}_{A,T}$ (see Section \ref{sec:3}).

\begin{remark}
    The map ${\cal T}_{A,\cos{}}$ is not polynomial. However, we will see below that $\Xcos$ is the image of a rational parametrization \eqref{eq:reparamCos}, up to Zariski closure. 
\end{remark}

Like the Chebyshev $T$-curve, the Chebyshev variety $\Xcos$ is the image of a linear subspace under the coordinate-wise cosine map.
Indeed, the row-span of the matrix $A$ consists of points $A^\top u = (a_1\cdot u, \ldots, a_n \cdot u)$ for $u\in \mathbb{C}^m$. The map ${\cal T}_{A,\cos{}}$ applies the cosine to each coordinate. Analogously, the toric variety ${\cal Y}_A$ can be viewed as the image of the same linear subspace under the coordinate-wise exponential map. From this perspective, the map $A$ need not be filled with nonnegative integer numbers. It is sufficient that $A$ has rational entries. Theorems \ref{thm:cosineChebDim} and \ref{thm:degCos} assume that $A \in \mathbb{N}^{m \times n}$ is an integer representative of its row-span.

\begin{theorem}\label{thm:cosineChebDim}
    The Chebyshev variety $\Xcos$ is irreducible of dimension equal to $\operatorname{rank}(A)$. In particular, it is the Zariski closure of the projection of the following variety to $\mathbb{C}^n$:
    \[
    {\cal V} \, = \, \{ (x,y) \in \mathbb{C}^n \times (\mathbb{C}^*)^n \,|\, y \in {\cal Y}_A, \, y_j^2-2 x_j y_j + 1 = 0 \text{ for } j = 1,\ldots,n \}.
    \]
\end{theorem}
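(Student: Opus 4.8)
The plan is to exploit the classical identity $T_k\!\left(\tfrac{v+v^{-1}}{2}\right) = \tfrac{v^k+v^{-k}}{2}$, which says that the cosine map and the monomial map are related by the substitution $\cos(u_i) = \tfrac{v_i + v_i^{-1}}{2}$, where $v_i = e^{\sqrt{-1}\,u_i}$. Under this change of variables, $\cos(a_j \cdot u)$ becomes $\tfrac{1}{2}\big(v^{a_j} + v^{-a_j}\big)$, where $v^{a_j} = \prod_i v_i^{a_{ij}}$ is exactly the monomial parametrizing the coordinate $y_j$ of the toric variety ${\cal Y}_A$. So I would introduce the rational map
\begin{equation} \label{eq:reparamCos}
\psi : (\mathbb{C}^*)^m \longrightarrow \mathbb{C}^n, \qquad \psi(v) \, = \, \left( \tfrac{1}{2}(v^{a_1} + v^{-a_1}), \, \ldots, \, \tfrac{1}{2}(v^{a_n} + v^{-a_n}) \right),
\end{equation}
and show that $\Xcos = \overline{{\rm im}\,\psi}$. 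The containment $\overline{{\rm im}\,\psi} \subseteq \Xcos$ (and conversely) follows because $\cos$ surjects onto $\mathbb{C}$ and the two parametrizations have the same image up to Zariski closure; this is the ``$\Xcos$ is the image of a rational parametrization'' claim already flagged in the Remark preceding the theorem.

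\textbf{Identifying $\Xcos$ with the projection of ${\cal V}$.}
Next I would relate $\psi$ to the variety ${\cal V}$. Writing $y_j = v^{a_j}$, the point $y = (y_1,\ldots,y_n)$ is precisely a point of the toric variety ${\cal Y}_A$ (on the open dense torus part), and then $x_j = \tfrac{1}{2}(y_j + y_j^{-1})$, which rearranges exactly to the quadratic relation $y_j^2 - 2 x_j y_j + 1 = 0$ appearing in the definition of ${\cal V}$. Thus every point $(x,\psi\text{-preimage})$ gives a point of ${\cal V}$, and projecting ${\cal V}$ to the $x$-coordinates recovers ${\rm im}\,\psi$ on the torus. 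Taking Zariski closures on both sides, and using that the torus is dense in ${\cal Y}_A$ together with the fact that the quadric $y_j^2 - 2x_j y_j + 1 = 0$ is finite (degree $2$) over each $y_j \neq 0$, yields $\Xcos = \overline{\pi({\cal V})}$, where $\pi$ is the projection to $\mathbb{C}^n$. The condition $y \in (\mathbb{C}^*)^n$ is what guarantees $y_j^{-1}$ is defined, matching the domain $(\mathbb{C}^*)^m$ of $\psi$.

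\textbf{Irreducibility and dimension.}
For irreducibility I would argue that $\Xcos$ is the closure of the image of the irreducible variety $(\mathbb{C}^*)^m$ under the morphism $\psi$, and the image of an irreducible variety under a morphism is irreducible, hence so is its closure. For the dimension I would compute $\dim \Xcos = \dim \overline{{\rm im}\,\psi}$ as the generic rank of the Jacobian of $\psi$. A clean way is to note that $\psi$ factors as $(\mathbb{C}^*)^m \xrightarrow{\,v\mapsto(v^{a_1},\ldots,v^{a_n})\,} {\cal Y}_A \xrightarrow{\,q\,} \Xcos$, where $q$ applies $y_j \mapsto \tfrac12(y_j+y_j^{-1})$ coordinatewise. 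The first map has image of dimension $\operatorname{rank}(A)$ (this is Theorem \ref{thm:maintoric} for ${\cal Y}_A$), and $q$ is generically finite (each $x_j$ has the two preimages $y_j, y_j^{-1}$), so it preserves dimension. Hence $\dim \Xcos = \operatorname{rank}(A)$.

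\textbf{Main obstacle.}
The step I expect to require the most care is verifying that $q$ is generically finite rather than dimension-dropping, i.e. that taking closures does not collapse dimension and that the quadric relations cut out exactly the projection of ${\cal V}$ and nothing spurious. Concretely, I must check that the fibers of $q$ over a generic point of $\Xcos$ are finite (governed by the $2^n$-fold sign ambiguity $y_j \leftrightarrow y_j^{-1}$, intersected with the torus ${\cal Y}_A$), so that no component of ${\cal V}$ projects to something of strictly larger dimension and no lower-dimensional component of $\Xcos$ is missed. Once generic finiteness of $q$ is established, the dimension count and the equality $\Xcos = \overline{\pi({\cal V})}$ both follow cleanly.
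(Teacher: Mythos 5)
Your proposal is correct and takes essentially the same route as the paper: the same reparametrization $\psi(v) = \left(\tfrac{1}{2}(v^{a_j}+v^{-a_j})\right)_j$ via $v = e^{\sqrt{-1}\,u}$, and the same substitution $y_j = v^{a_j}$, $x_j = \tfrac{1}{2}(y_j+y_j^{-1})$ linking $\Xcos$ to ${\cal V}$ and ${\cal Y}_A$. The only (harmless) divergence is in the bookkeeping: you get the dimension from generic finiteness of the coordinatewise map $y_j \mapsto \tfrac{1}{2}(y_j+y_j^{-1})$ on ${\cal Y}_A \cap (\mathbb{C}^*)^n$ (fibers of at most $2^n$ points) and irreducibility from the irreducibility of $(\mathbb{C}^*)^m$, whereas the paper computes the rank of the scaled Jacobian of $\psi$ and deduces irreducibility of $\Xcos$ from that of ${\cal V}$, using that the projection ${\cal V} \to {\cal Y}_A \cap (\mathbb{C}^*)^n$ has one-point fibers; both arguments are valid.
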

\begin{proof}
    Denote by $\pi_1: \mathbb{C}^n \times (\mathbb{C}^*)^n \to \mathbb{C}^n$ the projection onto the first $n$ coordinates. We begin by proving that $\Xcos \subset \overline{\pi_1({\cal V})}$. 
    We reparametrize $\Xcos$ as follows:
    \begin{equation}\label{eq:reparamCos}
    \psi \, : \, v \, \longmapsto \,  \left( \frac{v^{a_1}+ v^{-a_1}}{2}, \ldots , \frac{v^{a_n}+ v^{-a_n}}{2} \right), \quad \text{for }\, \, \, v \in (\mathbb{C}\setminus \{0\})^m.
    \end{equation}
    where $v^{a_j} = v_1^{a_{1,j}}\cdots v_m^{a_{m,j}}$, and $v_j = e^{\sqrt{-1} u_j}$. From the change of variables $y_j = v^{a_j}$, we see that a point $x \in \operatorname{im} \psi$ satisfies $x_j = \frac{1}{2}(v^{a_j}+ v^{-a_j}) = \frac{1}{2}(y_j+ y_j^{-1})$, hence it belongs to $\pi_1({\cal V})$. This gives the inclusion $\operatorname{im} \psi \subset \pi_1({\cal V})$ and taking the Zariski closure proves our first claim.
    
    Next, we show that $\dim \Xcos = \dim {\cal Y}_A$. In analogy with the tensor product case, let $J$ be the Jacobian of $\psi$ up to scaling row $i$ by $v_i$. We have
    \[
    J^\top \, = \, \left ( v_i \cdot \frac{\partial \psi_{j}}{\partial v_i} \right )_{i,j} \,=\, \left( a_{i,j}\, \frac{v^{a_j}- v^{-a_j}}{2} \right)_{i,j}.
    \]
    When $v^{2a_j}\neq 1$ for $j = 1,\ldots, n$,  this has the same rank as $A$. Hence $\dim \Xcos = \dim {\cal Y}_A$.

    Finally, the other projection $\pi_2:\mathbb{C}^n \times (\mathbb{C}^*)^n \to (\mathbb{C}^*)^n$ maps ${\cal V}$ onto ${\cal Y}_A\cap (\mathbb{C} \setminus \{0\})^n$, and its fibers consist of exactly one point. Since ${\cal Y}_A$ is irreducible of dimension $\operatorname{rank}(A)$, so is ${\cal V}$. As a consequence, $\pi_1({\cal V})$ is irreducible and $\dim \pi_1({\cal V}) \leq \dim {\cal Y}_A = \dim \Xcos$. By the inclusion $\operatorname{im} \psi \subset \pi_1({\cal V})$ proved above, $\Xcos$ coincides with the Zariski closure of $\pi_1({\cal V})$.
\end{proof}

A useful consequence of Theorem \ref{thm:cosineChebDim} is that we can obtain the equations of $\Xcos$ from the equations of ${\cal Y}_A$ just by substituting $y_j \mapsto x_j \pm \sqrt{x_j^2-1}$ in every coordinate, see Example \ref{ex:elliptope} below. We now deduce the degree of ${\cal X}_{A,\cos}$. Recall from Section \ref{sec:2} that $\deg \Phi_A = [\mathbb{Z}^m : \mathbb{Z}A]$ denotes the index of the sublattice of $\mathbb{Z}^m$ generated by the columns of the matrix $A$. We write $A \cup -A$ for the set of points $\{ a_j, - a_j \, : \, a_j \in A \} \subset \mathbb{Z}^n$, and $P_{A,\cos{}}$ is their convex hull.

\begin{theorem}\label{thm:degCos}
    Let $P_{A,\cos{}} = \operatorname{conv} (A\cup -A) \subset \mathbb{R}^m$ and let $\pi_1 : {\cal V} \to \Xcos$ be the projection from Theorem \ref{thm:cosineChebDim}. If $\dim \Xcos = m$, then the degree of $\Xcos$ is 
    \[
    \frac{m!\, \operatorname{vol} P_{A,\cos{}}}{\deg \pi_1 \cdot \deg \Phi_A}.
    \]
\end{theorem}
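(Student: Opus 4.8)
The plan is to compute $\deg \Xcos$ as a quotient: the number of solutions of a generic linear section, pulled back through the parametrization $\psi$ of \eqref{eq:reparamCos}, divided by the degree of $\psi$. By definition, $\deg \Xcos$ is the number of points in $\Xcos \cap \{c_0 + C\cdot x = 0\}$ for generic $c_0 \in \mathbb{C}^m$ and $C \in \mathbb{C}^{m\times n}$. Since $\Xcos = \overline{\operatorname{im}\psi}$ and, for generic data, all intersection points lie in the dense subset $\operatorname{im}\psi$, each such point has exactly $\deg \psi$ preimages in $(\mathbb{C}\setminus\{0\})^m$. Hence $\deg \Xcos = N / \deg \psi$, where $N$ is the number of $v \in (\mathbb{C}\setminus\{0\})^m$ solving $c_0 + C\cdot\psi(v) = 0$.

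First I would count $N$ using Kushnirenko's theorem. Each of the $m$ equations $c_{i,0} + \sum_j C_{ij}\,\tfrac{1}{2}(v^{a_j}+v^{-a_j}) = 0$ is a Laurent polynomial whose support is contained in $\{0\}\cup A \cup (-A)$, and for generic coefficients its Newton polytope is exactly $\operatorname{conv}(\{0\}\cup A\cup (-A))$. Because $A \cup (-A)$ is symmetric about the origin, $0$ already lies in $\operatorname{conv}(A\cup (-A)) = P_{A,\cos{}}$, so this Newton polytope equals $P_{A,\cos{}}$. All $m$ equations share this polytope, so Kushnirenko's theorem \cite[Theorem 3.16]{telen2022introduction} gives $N = m!\,\operatorname{vol}(P_{A,\cos{}})$ solutions in the torus for generic $c_0, C$.

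Next I would evaluate $\deg \psi$ by factoring $\psi$ through the toric parametrization. Writing $y_j = v^{a_j}$, the map $\psi$ is the composition of the monomial map $\Phi_A \colon (\mathbb{C}\setminus\{0\})^m \to {\cal Y}_A \cap (\mathbb{C}\setminus\{0\})^n$ with the map $g\colon y \mapsto (\tfrac{1}{2}(y_j+y_j^{-1}))_j$. By Section \ref{sec:2}, $\Phi_A$ is generically $\deg\Phi_A$-to-one, with $\deg\Phi_A = [\mathbb{Z}^m : \mathbb{Z}A]$. For $g$, I would use the variety ${\cal V}$ of Theorem \ref{thm:cosineChebDim}: the restriction of $\pi_2$ to ${\cal V}$ is a bijection onto ${\cal Y}_A\cap(\mathbb{C}\setminus\{0\})^n$, so $\pi_2^{-1}$ identifies $g$ with $\pi_1\colon {\cal V}\to\Xcos$, giving $\deg g = \deg\pi_1$. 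Since the degree of a composition of generically finite dominant maps multiplies (the function-field extension degrees multiply), $\deg\psi = \deg\Phi_A \cdot \deg\pi_1$. Combining with $N = m!\operatorname{vol}(P_{A,\cos{}})$ yields the claimed formula.

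The main obstacle is the bookkeeping around genericity. One must check that for generic $c_0, C$ no intersection point escapes to the boundary of the torus, so that Kushnirenko's count in $(\mathbb{C}\setminus\{0\})^m$ is exactly the relevant number and each point of the linear section is a regular value of $\psi$ carrying the full $\deg\psi$ fiber; one must also confirm that the factorization $\psi = g\circ\Phi_A$ is a genuine composition of generically finite dominant maps, so that degrees multiply. The identity $\deg g = \deg\pi_1$, obtained through the bijection $\pi_2$, is the one genuinely new ingredient; everything else is an application of Kushnirenko's theorem together with the toric dictionary from Section \ref{sec:2}.
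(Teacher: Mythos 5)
Your overall strategy is close in spirit to the paper's, but you organize the count differently, and the difference matters at exactly the step you flag as an ``obstacle.'' The paper does not apply Kushnirenko to the system $c_0 + C\cdot\psi(v)=0$ directly. It first writes $\deg\Xcos = \deg{\cal V}/\deg\pi_1$ and then computes $\deg{\cal V}$ by slicing ${\cal V}\subset\mathbb{C}^{2n}$ with a generic linear space; pulled back to the torus this produces the system \eqref{eq:sysDegCosProof}, in which $v^{a_j}$ and $v^{-a_j}$ carry \emph{independent} generic coefficients $c_{i,j}/2+d_{i,j}$ and $c_{i,j}/2$, so Kushnirenko applies verbatim, and the division by $\deg\Phi_A$ enters because the torus parametrizes ${\cal V}$ with degree $\deg\Phi_A$. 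Your system instead forces the coefficient of $v^{a_j}$ to equal that of $v^{-a_j}$; such systems are \emph{not} generic for the support $\{0\}\cup A\cup(-A)$, so the equality case of Kushnirenko's theorem is not automatic. This is a genuine gap as written, not mere bookkeeping. It can be closed: since ${\rm rank}(A)=m$, the polytope $P_{A,\cos{}}$ is full-dimensional and centrally symmetric, so $0$ lies in its interior and no proper face contains both $a_j$ and $-a_j$; hence every facial subsystem involves each coefficient $c_{i,j}$ at most once, its coefficients are genuinely generic, and the Bernstein--Kushnirenko nondegeneracy condition holds, giving $N=m!\,\operatorname{vol}(P_{A,\cos{}})$ exactly for generic $C,c_0$. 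Your remaining ingredients are correct and match the paper's denominator: $\pi_2|_{\cal V}$ is a bijection onto ${\cal Y}_A\cap(\mathbb{C}\setminus\{0\})^n$, so $\deg g=\deg\pi_1$, and multiplicativity of degrees for the dominant, generically finite composition $\psi=g\circ\Phi_A$ gives $\deg\psi=\deg\Phi_A\cdot\deg\pi_1$. In short: either supply the central-symmetry argument to justify your direct Kushnirenko count, or decouple the coefficients as the paper does by slicing ${\cal V}$ rather than $\Xcos$.
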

\begin{proof}
    Let us compute the degree of $\Xcos$ as the number of its intersection points with a generic affine subspace $L\subset \mathbb{C}^n$ of codimension $m$. Notice that the pullback of $L$ via $\pi_1$ intersects ${\cal V}$ transversally and the number of intersection points is the degree of ${\cal V}$. As a consequence, we obtain that  $\deg \Xcos = \frac{\deg {\cal V}}{\deg \pi_1}$. 

    It remains to compute $\deg {\cal V}$. Using \eqref{eq:reparamCos}, we can express ${\cal V}$ as the image of the map
    \[
    (\mathbb{C}^*)^m \ni v \mapsto \left( \frac{v^{a_1}+ v^{-a_1}}{2}, \ldots , \frac{v^{a_n}+ v^{-a_n}}{2}, v^{a_1}, \ldots , v^{a_n} \right) \in \mathbb{C}^{2n}.
    \]
    Its degree is the number of solutions to the following system of Laurent polynomial equations:
    \begin{equation}\label{eq:sysDegCosProof}
            \sum_{j=1}^n \left (  c_{i,j} \frac{v^{a_j}+ v^{-a_j}}{2} \, + \,  d_{i,j} v^{a_j} \right ) \, + \,  c_{i,0} =0 \qquad i = 1,\ldots,m,
    \end{equation}
    where $c_{i,0}, c_{i,j}, d_{i,j}$ are generic. Reorganizing the monomials, the number of solutions to \eqref{eq:sysDegCosProof} is the same as the number of solutions to the system $\sum_{j=1}^n c'_{i,j} v^{-a_j} + d'_{i,j} v^{a_j} + c_0=0$ for $ i = 1,\ldots,m$. By Kushnirenko's theorem \cite[Theorem 3.16]{telen2022introduction}, this number is the normalized volume of $P_{A,\cos{}} = \operatorname{conv} (A\cup -A)$, divided by the lattice index of $A$ (or, more precisely, of $A\cup-A$, but these coincide). The proposition follows from these observations. 
\end{proof}

We note that the degree $\deg \pi_1$ of the projection $\pi_1$ is at least $2$, since the preimage of a point $x = \pi_1(x,y) \in \Xcos$ contains $(x,y)$ and $(x,y^{-1})$. Hence, $\frac{{\rm vol}(P_{A,\cos})}{2 \deg \Phi_A}$ is an upper bound for $\deg {\cal X}_{A,\cos}$. However, we might have $\deg \pi_1 >2$.

\begin{example}
    Let $A = \left[\begin{smallmatrix}  1 & 0 & 0 & 2 \\ 0 & 1 & 0 & 3 \\ 0 & 0 & 1 & 0 \end{smallmatrix}\right]$. The hypersurface $\Xcos$ is the zero locus of 
    $$
    4x_1^4 - 16x_1^2 x_2^3x_4 + 12 x_1^2 x_2x_4 - 4 x_1^2 + 16x_2^6 - 24 x_2^4 + 8x_2^3x_4 + 9x_2^2 - 6x_2x_4 + x_4^2,
    $$
    so it has degree $6$. The volume of $P_{A,\cos}$ is $4$ and $\deg \Phi_A = 1$. The degree of the map $\pi_1$ is $4$, since $\pi_1^{-1}(\pi_1(x,y)) = \{(x,y),(x,y^{-1}),(x, y_1,y_2,y_3^{-1},y_4),(x, y_1^{-1},y_2^{-1},y_3,y_4^{-1})\}$ for a generic point $(x,y) \in {\cal V}$. Hence the formula from Theorem \ref{thm:degCos} reads $\frac{3! \cdot 4}{4 \cdot 1} = 6$.
\end{example}

\begin{example}\label{ex:elliptope}
    Consider the matrix $A$ from Example \ref{ex:pringles1}. The corresponding toric variety is the surface in Figure \ref{fig:degpringles}. The variety ${\cal V}$ from Theorem \ref{thm:cosineChebDim} is the zero locus of the polynomials
    \[
    y_1 y_2 - y_3,\; y_1^2-2 x_1 y_1 + 1,\; y_2^2-2 x_2 y_2 + 1,\; y_3^2-2 x_3 y_3 + 1.
    \]
    After elimination of the $y$ coordinates, we obtain the defining equation for $\Xcos:$
    \[
    \Xcos = \{ x \in \mathbb{C}^3 \,|\, x_1^2 + x_2^2 + x_3^2 - 2 x_1 x_2 x_3 - 1 = 0\}.
    \]
    This surface, displayed in Figure \ref{fig:sub-third}, describes the locus of $3\times 3$ correlation matrices with zero determinant. It bounds a spectrahedron known as the \emph{elliptope}, see \cite[Figure 1.1]{michalek2021invitation} and the related examples.
    As previously noticed, we can also obtain the equation for $\Xcos$ by substitution. There are $2^3$ possible substitutions, depending on the signs of the square roots. If we multiple them all, we obtain the desired equation, up to removing exponents:
    \begin{gather*}
    \prod_{\sigma_i \in \{\pm 1\}} \left(x_1 + \sigma_1\sqrt{x_1^2-1}\right)\left(x_2 + \sigma_2\sqrt{x_2^2-1}\right)-\left(x_3 + \sigma_3\sqrt{x_3^2-1}\right) \\
    = 16 \left(x_1^2+x_2^2+x_3^2-2 x_1 x_2 x_3 -1\right)^2.
    \end{gather*}
    In this case, the polytope $P_{A,\cos{}}$ is a hexagon. Its vertices are $\pm(1,2), \pm(1,1), \pm(2,3)$. The Euclidean volume is $3$. The lattice generated by the columns of $A$ has index $1$, and the degree of $\pi_1$ is $2$. Hence, the degree formula from Theorem \ref{thm:degCos} confirms that $\deg \Xcos = 3$.
    This is the number of intersection points of $\Xcos$ with a generic line $C \cdot x + c_0 = 0$. The three intersection points are in bijection with the three pairs of solutions $(v, v^{-1})$ to the equations
    \begin{align*}
        c_{i,0} \, + \, c_{i,1} \frac{v_1 v_2^2 + v_1^{-1} v_2^{-2}}{2} \, + \, c_{i,2} \frac{v_1 v_2 + v_1^{-1} v_2^{-1}}{2} \, + \, c_{i,3} \frac{v_1^2 v_2^3 + v_1^{-2} v_2^{-3}}{2} &\, = \,  0, \quad i = 1, 2.
    \end{align*}
    Solving such equations using homotopy continuation is the topic of Section \ref{sec:6-3}. Undoing the coordinate change $v_j = e^{\sqrt{-1}u_j}$ by setting $u_j = - \sqrt{-1} \log (v_j)$ gives all solutions to \[c_{i,0} + c_{i,1} \cos(u_1 + 2u_2) + c_{i,2} \cos(u_1 + u_2) + c_{i,3} \cos(2u_1 + 3u_2) \, = \,  0, \quad i = 1, 2. \qedhere \]
\end{example}

Next, we describe the singular locus of cosine Chebyshev varieties. We will do this via the parametrization ${\cal T}_{A,\cos}$. This has the disadvantage that we can only describe singularities inside the dense subset ${\rm im }\, {\cal T}_{A,\cos}\subset \Xcos$. For now, we do not know whether taking the closure gives rise to more singular points. In the examples we checked, this was not the case.

As noted above, we can work in a slightly more general setting where the matrix $A$ has rational entries. In this way, we can transform any rank $m$ matrix into its canonical form, where the leftmost $m\times m$ block is the identity matrix $\mathds{1}_m$, and the remaining columns have entries in $\mathbb{Q}$. Without loss of generality, our next result assumes that $A$ is in this form. This helps to parametrize the singular locus of $\operatorname{im} {\cal T}_{A,\cos}$. We use the notation $[n]=\{1, \ldots ,n \}$. For $J \subset [n]$, $A_J$ is the submatrix of $A$ consisting of the columns indexed by $J$. For $I\subset [m]$, $(a_j)_I$ denotes the vector of length $|I|$ containing the entries of $a_j$ indexed by $I$.

\begin{proposition}\label{prop:singCos}
    Let $A = \left[\mathds{1}_m~|~a_{m+1}~\cdots~a_{n} \right] \in \mathbb{Q}^{m\times n}$. We define two families of affine subspaces in $\mathbb{C}^m$. First, let ${\cal J} = \{J\subset [n]\,:\, {\rm rank}( A_J) < m \}$ and define
    \begin{equation}\label{eq:singCosPoints}
    L_{J,\kappa} \, =\, \{ u \in \mathbb{C}^m \,:\,  a_j \cdot u = \kappa_j \pi \, \,  \hbox{ for } \,  j \not\in J \}, \quad J \in {\cal J}, \, \kappa \in \mathbb{Z}^{n-|J|}.
    \end{equation}
    The second family of affine spaces is given by
    \begin{equation}\label{eq:singCosSelfInt}
    H_{I,\nu} \, = \, \left\lbrace u \in \mathbb{C}^m \,:\, (a_j)_I \cdot u_I = \nu_j\pi \hbox{ for } j = m+1,\ldots, n \right\rbrace, \quad I \subsetneq [m], \, \nu \in \mathbb{Z}^{n-m}. 
    \end{equation}
    The singular locus of $\operatorname{im} {\cal T}_{A,\cos}$ is contained in the union of all ${\cal T}_{A,\cos}(L_{J,\kappa})$ and ${\cal T}_{A,\cos}(H_{I,\nu})$.
\end{proposition}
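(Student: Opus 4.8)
The plan is to study smoothness directly on the parametrization, working through the reparametrization $\psi$ from \eqref{eq:reparamCos}. Since $v=e^{\sqrt{-1}u}$ defines a covering map $\mathbb{C}^m\to(\mathbb{C}^*)^m$ that is a local biholomorphism, a point $p\in\operatorname{im}{\cal T}_{A,\cos}$ is a smooth point of $\Xcos$ as soon as it has a preimage $v^*$ at which $\psi$ is an immersion and near which $\psi$ is injective modulo the involution $v\mapsto v^{-1}$, which by Theorem \ref{thm:cosineChebDim} accounts for the generic two-to-one behaviour. I would therefore prove the contrapositive: if $p$ is singular, then every preimage is either a critical point of $\psi$ or a point of extra coincidence, and I would match these two phenomena with the families $L_{J,\kappa}$ and $H_{I,\nu}$.

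First, the critical points. As in the proofs of Theorems \ref{thm:dimtensor} and \ref{thm:cosineChebDim}, I compute the torus-rescaled Jacobian
\[ J^\top \, = \, \left( v_i\,\frac{\partial \psi_j}{\partial v_i}\right)_{i,j} \, = \, \left( a_{i,j}\,\frac{v^{a_j}-v^{-a_j}}{2}\right)_{i,j}, \]
whose $j$-th column is a scalar multiple of $a_j$, the multiplier vanishing exactly when $v^{2a_j}=1$, i.e. when $a_j\cdot u\in\pi\mathbb{Z}$. Hence the rank of $J$ equals $\operatorname{rank}(A_N)$ for $N=\{j:a_j\cdot u\notin\pi\mathbb{Z}\}$, and $u$ is a critical point precisely when $\operatorname{rank}(A_N)<m$. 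Taking $J=N$ gives $J\in{\cal J}$, and the vanishing conditions $a_j\cdot u=\kappa_j\pi$ for $j\notin J$ say exactly that $u\in L_{J,\kappa}$. This places all critical values inside $\bigcup_{J,\kappa}{\cal T}_{A,\cos}(L_{J,\kappa})$.

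Next, the coincidences. Here the canonical form $A=[\mathds{1}_m\mid a_{m+1}~\cdots~a_n]$ is essential: the first $m$ coordinates of $\psi$ are $\tfrac12(v_i+v_i^{-1})$, so any two preimages $v,v'$ of the same point satisfy $v_i'\in\{v_i,v_i^{-1}\}$, i.e. $v'=\epsilon\cdot v$ for a sign vector $\epsilon\in\{\pm1\}^m$. The patterns $\epsilon=(1,\ldots,1)$ and $\epsilon=(-1,\ldots,-1)$ give $v'=v$ and $v'=v^{-1}$, the generic fibre. If $p$ is singular at a regular preimage there must be a further coincidence, governed by a mixed pattern with positive set $I:=\{i:\epsilon_i=1\}$, $\emptyset\neq I\subsetneq[m]$. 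Matching the remaining coordinates $j>m$, the equality $\psi(\epsilon\cdot v)=\psi(v)$ forces, for each such $j$, that $\prod_{i\in I}v_i^{a_{ij}}=\pm1$ or $\prod_{i\notin I}v_i^{a_{ij}}=\pm1$; in logarithmic form these read $(a_j)_I\cdot u_I\in\pi\mathbb{Z}$ or $(a_j)_{[m]\setminus I}\cdot u_{[m]\setminus I}\in\pi\mathbb{Z}$. When one of the two index sets works uniformly in $j$, the point lies in $H_{I,\nu}$ or in $H_{[m]\setminus I,\nu}$, as claimed.

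The main obstacle is precisely this last matching step. The delicate case is the mixed configuration, where distinct coordinates $j$ realize the coincidence through different index sets, so that $u$ need not lie in a single $H_{I,\nu}$; ruling this out, or absorbing it into the stated families (possibly after intersecting with the critical locus, since $\prod_{i\in I}v_i^{a_{ij}}=\prod_{i\notin I}v_i^{a_{ij}}=\pm1$ forces $a_j\cdot u\in\pi\mathbb{Z}$), is the crux of the argument, and one must also verify that whenever $\epsilon\cdot v\in\{v,v^{-1}\}$ the coincidence is the harmless generic one rather than a genuine self-intersection. A secondary, acknowledged limitation is that this parametric argument controls only singularities inside $\operatorname{im}{\cal T}_{A,\cos}$: points possibly added in the Zariski closure are invisible to $\psi$ and would require a separate analysis.
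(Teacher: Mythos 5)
Your proposal follows essentially the same route as the paper's proof: split the singular locus into critical values and self-intersections, note that the $j$-th column of the (rescaled) Jacobian is a multiple of $a_j$ vanishing exactly when $a_j\cdot u\in\pi\mathbb{Z}$ (giving the $L_{J,\kappa}$), and use the identity block to reduce coincidences to sign patterns $I\subsetneq[m]$, arriving at the same per-coordinate disjunction $(a_j)_I\cdot u_I\in\pi\mathbb{Z}$ or $(a_j)_{[m]\setminus I}\cdot u_{[m]\setminus I}\in\pi\mathbb{Z}$. The ``mixed'' configuration you flag as the crux is precisely the step the paper also passes over without comment --- its proof concludes directly that ``these are the spaces $H_{I,\nu}$ and $H_{[m]\setminus I,\nu}$'' without addressing whether the choice of disjunct can vary with $j$ --- so your argument is no less complete than the published one, and your remark that both disjuncts holding for the same $j$ forces $a_j\cdot u\in\pi\mathbb{Z}$ is a genuine observation the paper leaves implicit.
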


Although the list of affine spaces $L_{J,\kappa}, H_{I,\nu}$ in Proposition \ref{prop:singCos} is infinite, there are only finitely many distinct images under the map ${\cal T}_{A,\cos}$. That is, it is sufficient to consider finitely many values of $\kappa$ and $\nu$. Also, among the affine spaces $L_{J,\kappa}$, it suffices to use only the maximal elements of ${\cal J}$ (with respect to inclusion). In the language of matroids, this means we use only the maximal dependent sets $J$ of the matroid of $A$.

\begin{proof}[Proof of Proposition \ref{prop:singCos}]
    The singular locus of ${\cal T}_{A,\cos{}}(\mathbb{C}^m)$ is contained in the union of the image of points where the Jacobian of ${\cal T}_{A,\cos{}}$ is rank-deficient, and its self-intersections. 

    The Jacobian matrix of ${\cal T}_{A,\cos{}}$ is $J^\top = \left[ a_1 \sin{(a_1 \cdot u)} \cdots a_n \sin{(a_n \cdot u)}\right]$. It is rank deficient if and only if $\sin{(a_j \cdot u)}=0$ for all $j \not\in J$, where $J \in {\cal J}$. This recovers $L_{J,\kappa}$.

    Now, let $u, u' \in \mathbb{C}^m$ be such that  $u\neq \pm u' \mod 2\pi$, and ${\cal T}_{A,\cos{}}(u) = {\cal T}_{A,\cos{}}(u')$.
    Since the left-most $m\times m$ submatrix of $A$ is the identity, these points must satisfy $\cos(u_i) = \cos(u'_i)$ for all $i=1,\ldots,m$. Therefore, there exists $I\subset [m]$ such that 
    \[
    u_i = -u'_i \hbox{ for } i \not\in I, \qquad u_i = u'_i \hbox{ for } i \in I.
    \]
    modulo $2\pi$. Substituting this in the remaining entries of ${\cal T}_{A,\cos{}}(u')$ we obtain
    \[
    \cos{(a_j \cdot u')} = \cos{((a_j)_I \cdot u_I -(a_j)_{[m]\setminus I} \cdot u_{[m]\setminus I})}, \quad j = m+1,\ldots ,n.
    \]
    Using $\cos{(a_j \cdot u')} = \cos{(a_j \cdot u)}$, we find that $(a_j)_I \cdot u_I = \nu_j\pi$ or $(a_j)_{[m]\setminus I} \cdot u_{[m]\setminus I} = \nu_j\pi$ for some $\nu_j$, and for all $j = m+1,\ldots,n$. These are the spaces $H_{I,\nu}$ and $H_{[m]\setminus I,\nu}$.
\end{proof}

\begin{remark}
    It follows from Proposition \ref{prop:singCos} that if $A$ is generic, then
    the singular locus has codimension at least $\min \{n-m, m+1\}$ in $\operatorname{im} {\cal T}_{A,\cos}$.
\end{remark}

The description of the singular locus in Proposition \ref{prop:singCos}
sends \emph{affine-linear} spaces through the coordinatewise cosine map, rather than just linear spaces. This suggests an extension of the definition of cosine Chebyshev varieties, to be possibly investigated in future work.

In the case of surfaces in $3$-space $(m=2,n=3)$, we use the parametrization from Proposition \ref{prop:singCos} to find implicit equations for ${\rm Sing}(\Xcos)$. Given a matrix $A = \left[ \begin{smallmatrix}
    1 & 0 & a \\
    0 & 1 & b
\end{smallmatrix} \right]$, for $a,b>0$, write $a = \frac{a_1}{a_2}$ and $b = \frac{b_1}{b_2}$ with $a_1, a_2 \in \mathbb{Z}$, resp.~$b_1, b_2 \in \mathbb{Z}$, coprime. Then, \eqref{eq:singCosPoints} parametrizes a subset of the vertices of $[-1,1]^3$ described by $\big( (-1)^{k_1}, (-1)^{k_2}, (-1)^{ak_1+bk_2} \big)$ for some $k_i \in \mathbb{Z}$. Equation \eqref{eq:singCosSelfInt} instead defines the curves
\begin{gather*}
    {\cal T}_{A,\cos}(H_{\{1\},\nu}) \, = \, \{x \in \mathbb{C}^3 \, : \, T_{a_1}(x_1) = (-1)^{\nu a_2}, \,  T_{b_1}(x_2) = T_{b_2}((-1)^{\nu a_2} x_3)\}, \\
    {\cal T}_{A,\cos}(H_{\{2\},\mu}) \, =  \,   \{x \in \mathbb{C}^3 \, : \, T_{b_1}(x_2) = (-1)^{\mu b_2}, \, T_{a_1}(x_1) = T_{a_2}((-1)^{\mu b_2} x_3)\}.
\end{gather*}
Here $\nu, \mu$ run over $\mathbb{Z}$, but only finitely many curves are distinct: they are obtained from $0\leq \nu \leq a_1$ and $0\leq \mu \leq b_1$.
Among these curves, the ones contained in the singular locus are $0 < \nu < a_1$ and $0 < \mu < b_1$. The extremal cases $\nu = 0,a_1$ and $\mu = 0, b_1$ do not produce singularities. In the notation of the proof of Proposition \ref{prop:singCos}, they violate the condition $u \neq \pm u' \mod 2\pi$. We illustrate the $3$-dimensional case with two examples.

\begin{example}
    Consider the matrix $A$ from Example \ref{ex:elliptope}. Its normal form is $\left[ \begin{smallmatrix}
    1 & 0 & 1 \\
    0 & 1 & 1
\end{smallmatrix} \right]$. The first type of singularities of $\Xcos$ are the points $(1, 1, 1) , (1, -1, -1), (-1, 1, -1), (-1, -1, 1)$. These the are the images of $L_{J,\kappa}$. The second type of singularities does not exist, since there are no integers in $(0,1)$. The images of $H_{I, \nu}$ for $I = \{1\}$ or $I = \{2\}$ and $\nu \in \mathbb{Z}$ are the smooth lines $\{x_1 \pm 1 = 0, x_2 \pm x_3 = 0\}$ and $\{x_2 \pm 1 = 0, x_1 \pm x_3 = 0\}$ contained in the elliptope.
\end{example}

\begin{example}\label{ex:trigSing}
    \begin{figure}[ht]
        \centering
        \includegraphics[height=5cm]{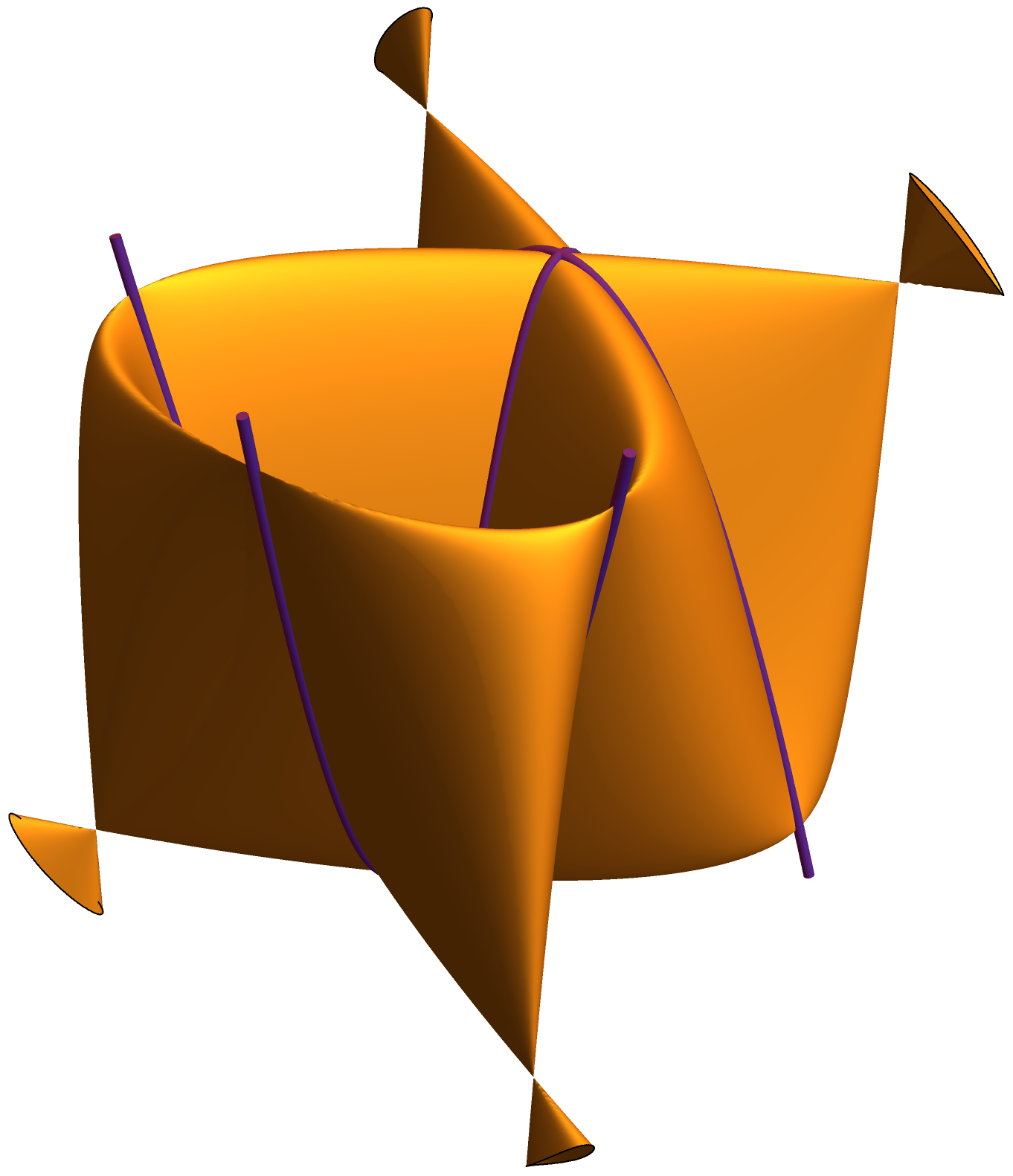}
        \quad \quad
        \includegraphics[height=5cm]{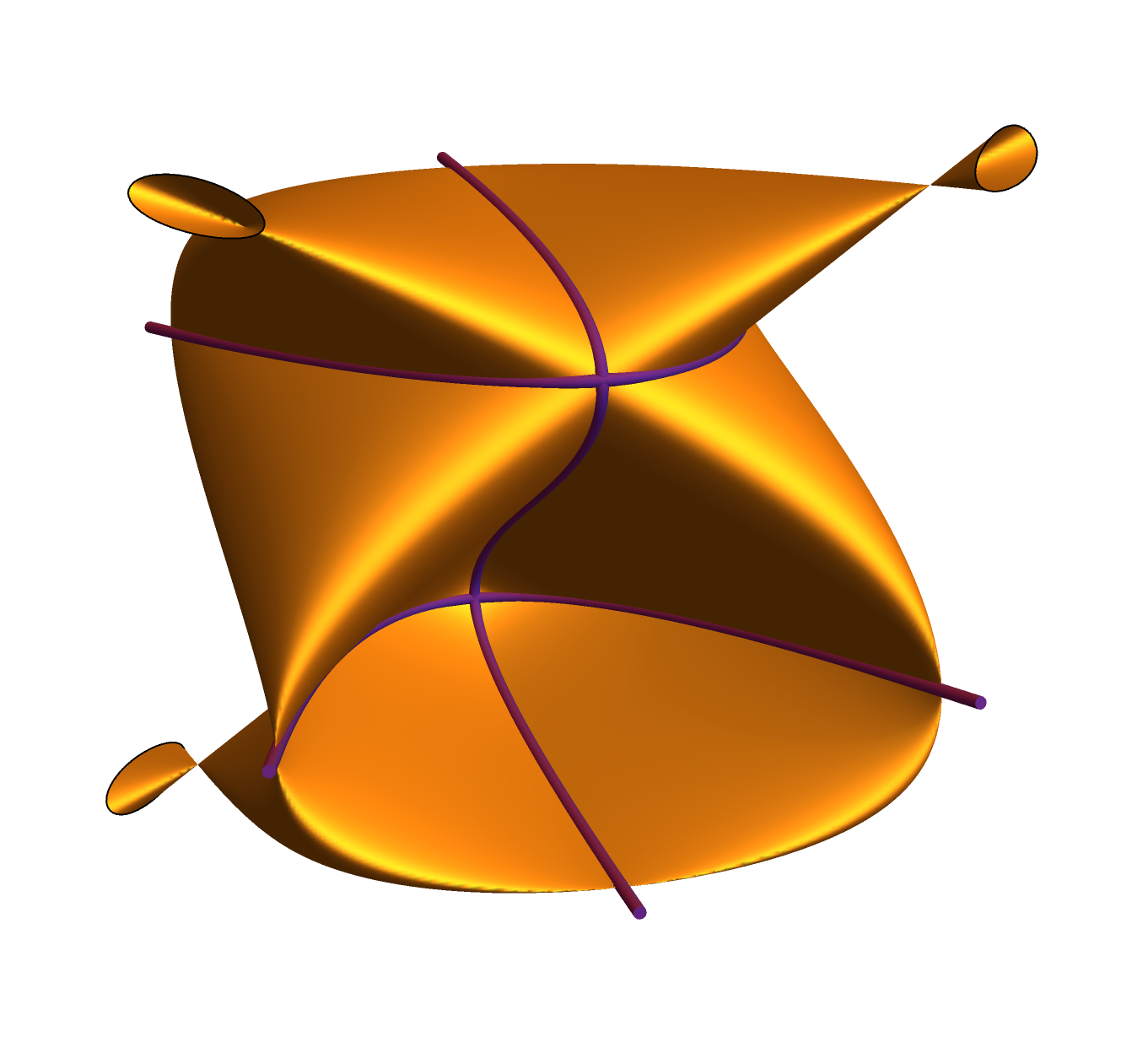}
        \caption{Two views on the Chebyshev surface from Example \ref{ex:trigSing} and its singular~curves.
        }
        \label{fig:trigSing}
    \end{figure}
    Consider the matrix $A = \left[ \begin{smallmatrix}
        1 & 0 & 2 \\
        0 & 1 & 3
    \end{smallmatrix} \right]$.
    The associated cosine Chebyshev variety $\Xcos$ is the sextic surface in Figure \ref{fig:trigSing}, defined by 
    \[
    4 x_1^4-16 x_1^2 x_2^3 x_3+12 x_1^2 x_2 x_3-4 x_1^2+16 x_2^6-24 x_2^4+8 x_2^3 x_3+9 x_2^2-6 x_2 x_3+x_3^2 = 0.
    \]
    Its singular locus consists of four points and three curves (purple):
    \begin{gather*}
        (1,1,1), \quad (-1,-1,-1), \quad (1,-1,1), \quad (-1,1,-1), \\
        \{x_1 = 0,\,  4 x_2^3 - 3 x_2  = -x_3 \}, \; \{ 2 x_2 = -1,\, 2 x_1^2 -1   = x_3 \}, \; \{ 2 x_2 = 1,\, 2 x_1^2 -1  = -x_3\}. \qedhere
    \end{gather*}
\end{example}

\section{Applications and future directions} \label{sec:6}

This section illustrates the role of Chebyshev varieties in some applications, and identifies some directions for future research. We start with the distribution of real roots of univariate polynomials which are sparse in the Chebyshev basis. Then, we illustrate an eigenvalue algorithm for solving Chebyshev equations in the case where $A$ satisfies the condition \eqref{eq:suffdense}. We solve linear equations on ${\cal X}_{A,\cos}$ using homotopy continuation. Finally, we discuss Chebyshev varieties parametrized by different multivariate Chebyshev polynomials from \cite{hubert2022sparse,ryland2010multivariate}. 

\subsection{Real roots}
    Theorems \ref{thm:hyperchev}, \ref{thm:hyperT} count the number of real roots of polynomials $\alpha T_a(t) - \beta T_b (t) = 0$. A natural question is what happens when we pass from Chebyshev plane curves to space curves. Consider $A = [2~3~7]$ and the Chebyshev $T$-curve ${\cal X}_{A,T}$ of degree $7$, displayed in Figure \ref{fig:space_curve}, left. 
    Given a plane $v^\perp\subset\mathbb{C}^3$, we are interested in the number of real points in ${\cal X}_{A,T}\cap v^\perp$. If the intersection is transversal, then locally the number is constant. It can change only when $v^\perp$ contains the tangent to ${\cal X}_{A,T}$ at a point in ${\cal X}_{A,T} \cap v^\perp$. This happens when $v$ satisfies 
    \small
    \begin{gather*}
        v_3 \cdot (2048 v_1^4 v_2^5+2304 v_1^2 v_2^7+27648 v_2^9+25000 v_1^8 v_3+28125 v_1^6 v_2^2 v_3+378460 v_1^4 v_2^4 v_3-26112 v_1^2 v_2^6 v_3\\
        -27648 v_2^8 v_3-481250 v_1^6 v_2 v_3^2-5797820 v_1^4 v_2^3 v_3^2+3930304 v_1^2 v_2^5 v_3^2+119808 v_2^7 v_3^2+153125 v_1^6 v_3^3\\
        +23852220 v_1^4 v_2^2 v_3^3-19302080 v_1^2 v_2^4 v_3^3+1480192 v_2^6 v_3^3-29985060 v_1^4 v_2 v_3^4+34354880 v_1^2 v_2^3 v_3^4\\
        -1229312 v_2^5 v_3^4+12850152 v_1^4 v_3^5-77561904 v_1^2 v_2^2 v_3^5+1229312 v_2^4 v_3^5+114556512 v_1^2 v_2 v_3^6\\
        +22588608 v_2^3 v_3^6-58353904 v_1^2 v_3^7-22588608 v_2^2 v_3^7-52706752 v_2 v_3^8+52706752 v_3^9)  \, = \, 0.
    \end{gather*}
    \normalsize
    This discriminant divides $\mathbb{R}^3$ up into chambers. For $v$ in the interior of such a chamber, there is a fixed number of real intersections.
    The chambers are invariant under scaling $v$. Hence, we may restrict to the upper hemisphere of $S^2$ without loss of generality. There are $5$ open chambers, see Figure \ref{fig:super}.
    \begin{figure}[ht]
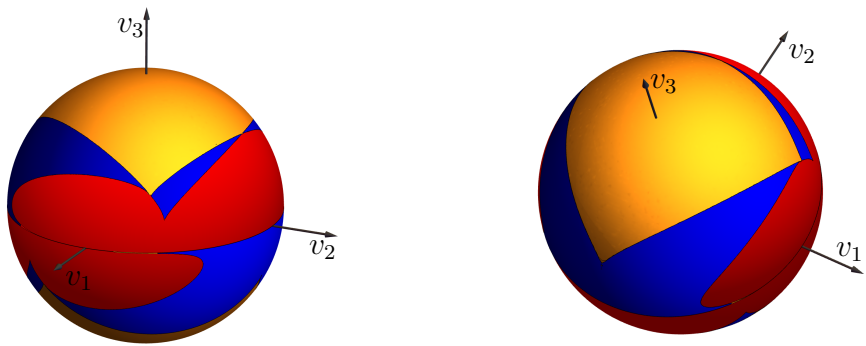

        \centering
        \input{Figures/super1}
        \qquad \qquad
        \input{Figures/super2}
        \caption{A discriminantal subdivision of the sphere: front and top view. }
        \label{fig:super}
    \end{figure}
    The yellow regions contain vectors $v$ such that ${\cal X}_{A,T} \cap v^\perp$ contains $7$ real points; for $v$ in the blue regions there are $5$ real and $2$ complex points; for $v$ in the red region there are $3$ real and $4$ complex points. On the coordinate plane $v_3=0$, there are $3$ real intersection points. At $v = (\pm 1,0,0)$, two out of $3$ real roots collide. There are always at least $2$ real solutions, and there are many planes that achieve the maximum number of $7$ real solutions.
This example and similar computations support the following conjecture.
\begin{conjecture}
    Let $A = [a_1~\cdots~a_n] \in \mathbb{N}^{1\times n}$ be such that 3 of its entries are pairwise coprime. Any hyperplane through the origin intersects ${\cal X}_{A,T}$ in at least $\min_{j \in [n]} a_j$ real points.
\end{conjecture}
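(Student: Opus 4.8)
The plan is to reduce the statement to a count of real roots of a single univariate function and then to bound that count by a winding-number (argument principle) argument. First I would invoke Theorem \ref{thm:maincurves}: since three entries of $A$ are pairwise coprime, the parametrization ${\cal T}_{A,T}$ is a closed embedding whose inverse is a \emph{real} polynomial $P$. Hence every real point of ${\cal X}_{A,T}$ has a real preimage $t$, and the real points of ${\cal X}_{A,T}$ lying on a hyperplane $\{\sum_j c_j x_j = 0\}$ with $c \in \mathbb{R}^n$ are in bijection with the real roots of $g(t) = \sum_{j=1}^n c_j\, T_{a_j}(t)$. It therefore suffices to show that $g$ has at least $a := \min_j a_j$ real roots; in fact I would only produce roots in $[-1,1]$, which already gives the lower bound.

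Next I substitute $t = \cos\theta$ with $\theta \in [0,\pi]$, so that $T_{a_j}(\cos\theta) = \cos(a_j\theta)$ and real roots of $g$ in $[-1,1]$ correspond exactly to roots in $[0,\pi]$ of
\[
h(\theta) = \sum_{j=1}^n c_j \cos(a_j\theta) = \mathrm{Re}\, H(\theta), \qquad H(\theta) = \sum_{j=1}^n c_j\, e^{\sqrt{-1}\,a_j\theta}.
\]
For generic $c$ the two real conditions $\mathrm{Re}\,H = \mathrm{Im}\,H = 0$ have no common solution, so $H(\theta) \neq 0$ for all real $\theta$ and the winding number $w$ of $\theta \mapsto H(\theta)$ around the origin over $[0,2\pi]$ is well defined. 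The key estimate is $w \geq a$. Factoring out the lowest frequency gives $H(\theta) = e^{\sqrt{-1}\,a\theta}\, Q(e^{\sqrt{-1}\,\theta})$, where $Q(z) = \sum_j c_j z^{a_j - a}$ has nonzero constant term $Q(0) = \sum_{j:\, a_j = a} c_j$ for generic $c$. By the argument principle, $w = a + \#\{\text{zeros of } Q \text{ in } |z|<1\} \geq a$.

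Then I would convert the winding number into a zero count. A loop avoiding the origin with winding number $w$ must cross the imaginary axis (the locus $\mathrm{Re}=0$) at least $2w$ times over $[0,2\pi)$: its continuous argument increases by $2\pi w$, and therefore attains values in $\tfrac{\pi}{2} + \pi\mathbb{Z}$ at least $2w$ times. Each such crossing is a zero of $h$, so $h$ has at least $2w$ zeros in $[0,2\pi)$. Since $h$ is even and $2\pi$-periodic, its zeros in $(0,\pi)$ are matched with those in $(\pi,2\pi)$ via $\theta \mapsto 2\pi - \theta$, leaving at least $w \geq a$ zeros in $[0,\pi]$, i.e. at least $a$ real roots of $g$ in $[-1,1]$. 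This settles the generic case and is consistent with the $n=2$ statement of Theorem \ref{thm:hyperchev}, where $Q(z) = c_1 + c_2 z^{b-a}$ forces $w \in \{a,b\}$.

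Finally I would remove genericity by a limiting argument: for an arbitrary coefficient vector $c^{*}$, approximate it by generic $c^{(k)} \to c^{*}$; the $a$ real roots produced above lie in the compact interval $[-1,1]$ and, after passing to a subsequence, converge to real roots of $g^{*} = \sum_j c_j^{*}\, T_{a_j}$, yielding at least $a$ real roots counted with multiplicity. The main obstacle is exactly this analytic endgame: one must make the crossing count rigorous at tangential (non-transversal) intersections with the imaginary axis, track multiplicities correctly, and verify that the bounded real roots neither collapse nor disappear in the limit, so that the bound holds for \emph{every} hyperplane rather than only for generic ones. Checking sharpness of the constant $\min_j a_j$ is comparatively easy, since it is attained at the coordinate hyperplanes where $g = c_{j_0} T_{a}$.
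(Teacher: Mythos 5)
The paper offers no proof of this statement: it appears as a conjecture, supported only by the discriminant computation for $A=[2~3~7]$ and by the $n=2$ case (Theorem \ref{thm:hyperchev}), whose proof via interlacing of the roots of $T_a$ and $T_b$ does not extend to $n\geq 3$. Your argument, by contrast, looks to me like a correct proof of the conjecture. The reduction is sound: by Theorem \ref{thm:maincurves} the map ${\cal T}_{A,T}$ is a closed embedding onto a smooth curve with a \emph{real} polynomial inverse $P$, so real points of ${\cal X}_{A,T}\cap\{c\cdot x=0\}$ correspond bijectively, with matching multiplicities, to real roots of $g(t)=\sum_j c_j T_{a_j}(t)$, and it suffices to produce roots in $[-1,1]$. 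The substitution $t=\cos\theta$ then turns the problem into a lower bound for the number of zeros of a real trigonometric polynomial with lowest frequency $a=\min_j a_j$, and your winding-number estimate $w=a+\#\{\text{zeros of }Q\text{ in }|z|<1\}\geq a$ is exactly the classical Sturm--Hurwitz theorem (whose standard complex-analytic proof is the one you give); citing that theorem would shorten the write-up.

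The obstacles you flag at the end are all routine, so you should not undersell the argument. Tangential crossings are a non-issue: you are counting distinct values of the continuous argument $\phi$ lying in $\tfrac{\pi}{2}+\pi\mathbb{Z}$, and distinct attained values occur at distinct $\theta$ whatever the local behaviour; for generic $c$ one has $h(0)\neq 0$, the half-open interval $[\phi(0),\phi(0)+2\pi w)$ contains exactly $2w$ such values, and each is attained in $(0,2\pi)$ by the intermediate value theorem. The multiplicity bookkeeping in the limit follows from Rolle: if $r$ of the distinct real roots of $g^{(k)}$ converge to the same $t^*$, then $(g^{(k)})^{(s)}$ has $r-s$ roots converging to $t^*$, whence $(g^*)^{(s)}(t^*)=0$ for $s=0,\dots,r-1$; the roots cannot escape to infinity because they lie in $[-1,1]$. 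For completeness you should dispose of the degenerate cases ($\min_j a_j=0$ is vacuous; $g^*\equiv 0$, possible only for repeated columns of $A$, means the hyperplane contains the curve) and justify density of the good parameters, e.g.\ any $c$ with a dominant coefficient $|c_{j_0}|>\sum_{j\neq j_0}|c_j|$ gives $H\neq 0$ on $\mathbb{R}$, and the bad locus is a proper closed semialgebraic subset of $\mathbb{R}^n$. Note that what you prove is the count with multiplicity (consistent with the phrasing of Theorems \ref{thm:hyperchev} and \ref{thm:hyperT}), with $a$ \emph{distinct} real points for generic hyperplanes; your generic bound $w=a+N$ also explains the chamber counts in Figure \ref{fig:super}. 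Written up with these details, this would settle the conjecture.
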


\subsection{Solving tensor product Chebyshev equations} \label{sec:6-2}
Section \ref{sec:4} investigates Chebyshev varieties ${\cal X}_{A,\otimes}$ parametrized by tensor product Chebyshev polynomials ${\cal T}_{a_j,\otimes}(t_1, \ldots, t_m)$. Polynomial systems of equations of the form 
\begin{equation} \label{eq:tensoreqs}
f_i (t)  \, = \, c_{i,0} +  \sum_{j=1}^n c_{i,j} \, {\cal T}_{a_j,\otimes}(t)  \, = \, 0, \quad i = 1, \ldots, m
\end{equation}
are interpreted as linear equations on ${\cal X}_{A,\otimes}$. This suggests a strategy for solving \eqref{eq:tensoreqs}: (i) solve the linear system $c_0+C \cdot x  = 0$, and (ii) intersect the solution space with ${\cal X}_{A,\otimes}$. Step (i) is essentially a nullspace computation, and can be performed using (numerical) linear algebra. Step (ii) is less straightforward, but can be turned into linear algebra too. 

Here we describe a linear algebra algorithm based on this idea. It adapts well known strategies for the monomial basis (see e.g.~\cite{dreesen2012back,telen2020solving}) to that of tensor product Chebyshev polynomials. Importantly, throughout the outlined algorithm, we never expand into monomials.

For simplicity, we assume that the matrix $A$ has rank $m$ and that it is sufficiently dense: it satisfies \eqref{eq:suffdense}. Similar to \eqref{eq:tensormap}, for any set of multi-indices $A' = [a_1' ~ \cdots ~ a_{n'}'] \in \mathbb{N}^{m \times n'}$ we write ${\cal T}_{A', \otimes}(t)$ for the column vector $( {\cal T}_{a_1',\otimes}(t), \ldots, {\cal T}_{a_{n'}', \otimes}(t))^\top$.
Let $P_A = {\rm conv}(0 \cup A) \subset \mathbb{R}^m$ and write $\Delta_m = {\rm conv}(0, e_1, \ldots, e_m)$ for the standard simplex in $\mathbb{R}^m$. For $k \in \mathbb{N}$, the \emph{$k$-dilation} of $P_A$ is $k \cdot P_A = \{ k \cdot p  \, : \, p \in P_A \}$ and the \emph{Minkowski sum} of two polytopes $P, Q \subset \mathbb{R}^m$ is $P+Q = \{ p + q \, : \, p \in P, q \in Q\}$. We define the polytope 
\[ P_{A_{\rm ext}} \, = \, m \cdot P_A + \Delta_m \, = \, \{ m \cdot p + q \in \mathbb{R}^m \, : \, p \in P_A, \, q \in \Delta_m \}. \]
The lattice points $P_{A_{\rm ext}} \cap \mathbb{N}^m$ contained in this polytope are the columns of a matrix $A_{\rm ext}$. Similarly, the columns of a new matrix $B$ are the lattice points of $(m-1) \cdot P_A + \Delta_m$. With these definitions, one easily verifies that there is a unique matrix $M$ satisfying 
\[ M \cdot {\cal T}_{A_{\rm ext},\otimes}(t) \, = \, \begin{bmatrix}
    {\cal T}_{B,\otimes}(t) \cdot f_1(t) \\ 
    {\cal T}_{B,\otimes}(t) \cdot f_2(t) \\
    \vdots \\
    {\cal T}_{B,\otimes}(t) \cdot f_m(t)
\end{bmatrix}.\]
That matrix has size $(m \cdot |B|) \times |A_{\rm ext}|$, where $|\cdot|$ denotes the number of columns. Its rows are obtained by expanding ${\cal T}_{b, \otimes} (t)\cdot f_i(t)$ in the entries of ${\cal T}_{A_{\rm ext},\otimes}(t)$, for all $b \in B$ and $i = 1, \ldots, m$. We write $V_{\mathbb{C}^m}(f_1, \ldots, f_m) = \{ t^{(1)}, \ldots, t^{(\delta)}\}$ for the set of complex solutions to our equations.

\begin{theorem} \label{thm:nullspacebasis}
    Let $A \in \mathbb{N}^{m \times n}$ be such that ${\rm rank}(A) = m<n$ and it satisfies \eqref{eq:suffdense}. For generic $C, c_0$, the vectors ${\cal T}_{A_{\rm ext},\otimes}(t^{(i)})$, $i = 1, \ldots, \delta$ form a basis for the kernel of the matrix $M$ defined above. In particular, $M$ has rank $|A_{\rm ext}| - \delta = |A_{\rm ext}| - m! \cdot {\rm vol}(P_A)$.
\end{theorem}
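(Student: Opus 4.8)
The plan is to reduce to the toric (monomial) setting and invoke the theory of sparse resultant/normal-form matrices. The inclusion of the proposed basis into $\ker M$ is immediate: evaluating the defining identity $M\cdot{\cal T}_{A_{\rm ext},\otimes}(t)=({\cal T}_{B,\otimes}(t)\,f_i(t))_{i}$ at a solution $t=t^{(j)}$ and using $f_i(t^{(j)})=0$ gives $M\cdot{\cal T}_{A_{\rm ext},\otimes}(t^{(j)})=0$. So every evaluation vector lies in $\ker M$, and it remains to prove there are $\delta$ of them, that they are independent, and that they span.

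Next I would pass to monomials. Condition \eqref{eq:suffdense} says precisely that $A$ is a lower set (closed under subtracting a standard basis vector while staying nonnegative); hence $P_A$ is a down-closed polytope, and one checks that the Minkowski sums $(m-1)P_A+\Delta_m$ and $mP_A+\Delta_m$ are down-closed as well, so their lattice-point sets $B$ and $A_{\rm ext}$ are again lower sets. As in the proof of Theorem \ref{thm:suffdense}, for a lower set $S$ the span of $\{{\cal T}_{a,\otimes}\}_{a\in S}$ equals the monomial span $R_S:=\langle t^a : a\in S\rangle$, via a triangular (hence invertible) change of basis; moreover each product ${\cal T}_{b,\otimes}{\cal T}_{a_j,\otimes}$ is supported on $A_{\rm ext}$, so $M$ is well defined and is carried by this change of basis to the analogous monomial matrix $M_{\rm mon}$ of the toric system $\hat f_i=c_{i,0}+\sum_j c_{i,j}t^{a_j}$. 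Solutions and evaluation vectors correspond, so it suffices to treat $M_{\rm mon}$.

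Now reformulate. The rows of $M_{\rm mon}$ are the coefficient vectors of the products $t^b\hat f_i$, so its row space is isomorphic to $W:=\langle t^b\hat f_i : b\in B,\ i\in[m]\rangle\subseteq R_{A_{\rm ext}}$ and ${\rm rank}(M_{\rm mon})=\dim W$, while $\ker M_{\rm mon}=W^\perp$ with the vector ${\cal T}_{A_{\rm ext},\otimes}(t^{(i)})$ corresponding to the point functional $\operatorname{ev}_{t^{(i)}}$. By Kushnirenko's theorem the generic system has exactly $\delta=m!\,{\rm vol}(P_A)$ simple solutions in $(\mathbb{C}^*)^m$. Thus the whole statement reduces to showing that evaluation at these solutions induces an isomorphism $R_{A_{\rm ext}}/W\xrightarrow{\ \sim\ }\mathbb{C}^\delta$: this gives $\dim W=|A_{\rm ext}|-\delta$, hence ${\rm rank}(M)=|A_{\rm ext}|-m!\,{\rm vol}(P_A)$ and $\dim\ker M=\delta$, and dually exhibits the $\delta$ functionals $\operatorname{ev}_{t^{(i)}}$ as a basis of $W^\perp=\ker M$.

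The \emph{main obstacle} is this evaluation isomorphism, and within it the inclusion $\ker(\operatorname{ev})\subseteq W$: every polynomial supported on $A_{\rm ext}$ that vanishes at all $\delta$ solutions must already lie in the truncated ideal $\langle t^b\hat f_i\rangle$. (The reverse inclusion $W\subseteq\ker(\operatorname{ev})$ is clear.) This is a toric Castelnuovo--Mumford/Macaulay regularity statement, and the degrees $A_{\rm ext}=mP_A+\Delta_m$ and $B=(m-1)P_A+\Delta_m$ are chosen exactly to meet the sparse regularity bound for the complete intersection $\hat f_1=\cdots=\hat f_m=0$. I would establish it from the exactness, for generic coefficients, of the toric Koszul/resultant complex of $P_A$ beyond this bound, equivalently from the normal-form/resultant constructions in \cite{telen2020solving,telen2022introduction}, which yield precisely the isomorphism $R_{A_{\rm ext}}/W\cong\mathbb{C}^\delta$.
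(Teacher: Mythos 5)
Your proposal is correct and follows essentially the same route as the paper: the inclusion of the evaluation vectors in $\ker M$ is immediate, the condition \eqref{eq:suffdense} reduces everything to the monomial/toric setting via a triangular change of basis, and the remaining content is exactly the toric Macaulay-regularity statement that $R_{A_{\rm ext}}/W\cong\mathbb{C}^\delta$, which the paper obtains by citing \cite[Theorem 4.4]{bender2022toric} and combining with Theorem \ref{thm:suffdense} for the count $\delta=m!\cdot{\rm vol}(P_A)$. You spell out the reduction and the dual reformulation in more detail than the paper does, but you defer the same key step to the same body of toric normal-form/resultant results.
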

\begin{proof}
    It is clear that $M \cdot {\cal T}_{A_{\rm ext},\otimes}(t^{(i)}) = 0$ for $i = 1, \ldots, \delta$. The fact that these vectors form a basis for $\ker M$ is a consequence of \cite[Theorem 4.4]{bender2022toric}. The statement about the rank follows from this fact and Theorem \ref{thm:suffdense}: by genericity of $C, c_0$, we have $\delta = \deg {\cal X}_{A,\otimes}$.
\end{proof}
Here is a reformulation of Theorem \ref{thm:nullspacebasis} in terms of Chebyshev varieties.
\begin{corollary}
   In the situation of Theorem \ref{thm:nullspacebasis}, the linear space $\ker M \subset \mathbb{C}^{|A_{\rm ext}|}$ intersects the Chebyshev variety ${\cal X}_{A_{\rm ext}, \otimes}$ in $\delta$ points, namely ${\cal T}_{A_{\rm ext},\otimes}(t^{(i)}) = 0$ for $i = 1, \ldots, \delta$. These points are independent vectors in $\mathbb{C}^{|A_{\rm ext}|}$.
\end{corollary}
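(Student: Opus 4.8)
The plan is to derive the corollary as a direct geometric reinterpretation of Theorem \ref{thm:nullspacebasis}, rather than as an independent result. The key observation is that the vectors ${\cal T}_{A_{\rm ext},\otimes}(t^{(i)})$, which Theorem \ref{thm:nullspacebasis} identifies as a basis for $\ker M$, are by definition the images of the solution points $t^{(i)}$ under the parametrization ${\cal T}_{A_{\rm ext},\otimes}$. Hence each such vector lies simultaneously in the Chebyshev variety ${\cal X}_{A_{\rm ext},\otimes} = \overline{{\rm im}\,{\cal T}_{A_{\rm ext},\otimes}}$ (being an actual point of the image) and in $\ker M$ (by the theorem). This immediately produces $\delta$ points in the intersection $\ker M \cap {\cal X}_{A_{\rm ext},\otimes}$.

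First I would make the identification explicit: the linear space $\ker M$ is exactly the solution space of the homogeneous linear system $M x = 0$ on $\mathbb{C}^{|A_{\rm ext}|}$, and Theorem \ref{thm:nullspacebasis} asserts that for generic $C, c_0$ the $\delta$ vectors ${\cal T}_{A_{\rm ext},\otimes}(t^{(i)})$ form a basis of this space. Being a basis, they are linearly independent vectors, which settles the final sentence of the corollary. Next I would note that each ${\cal T}_{A_{\rm ext},\otimes}(t^{(i)})$ is a point of ${\rm im}\,{\cal T}_{A_{\rm ext},\otimes}\subset {\cal X}_{A_{\rm ext},\otimes}$, so all $\delta$ points lie in the intersection $\ker M \cap {\cal X}_{A_{\rm ext},\otimes}$. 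Since these $\delta$ points are linearly independent, they are in particular distinct, so the intersection contains exactly these $\delta$ distinct points.

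The only subtle point—and the one I expect to require the most care—is verifying that the intersection contains \emph{no other} points beyond these $\delta$, i.e.\ that ``$\delta$ points'' is an exact count and not merely a lower bound. Here I would argue that any point $x \in \ker M \cap {\cal X}_{A_{\rm ext},\otimes}$ must arise as ${\cal T}_{A_{\rm ext},\otimes}(t)$ for some $t$ lying in the solution variety $V_{\mathbb{C}^m}(f_1,\ldots,f_m)$: the vanishing of the products ${\cal T}_{B,\otimes}(t)\cdot f_i(t)$ encoded by $M x = 0$, combined with the fact that the constant monomial $1$ lies among the entries of ${\cal T}_{B,\otimes}$ (since $0 \in B$), forces $f_i(t) = 0$ for all $i$. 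This reduces the count on ${\cal X}_{A_{\rm ext},\otimes}$ to the count $\delta = \deg {\cal X}_{A,\otimes}$ of solutions established in Theorem \ref{thm:nullspacebasis}, using genericity of $C, c_0$. I would keep this verification brief, since the corollary is stated as a reformulation and the substantive content is already contained in Theorem \ref{thm:nullspacebasis}.
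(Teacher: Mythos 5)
Your proposal is correct and matches the paper's treatment: the paper offers no separate proof, presenting the corollary as an immediate reformulation of Theorem \ref{thm:nullspacebasis}, which is exactly how you derive it (basis vectors of $\ker M$ are independent, and each lies in ${\rm im}\,{\cal T}_{A_{\rm ext},\otimes}\subset{\cal X}_{A_{\rm ext},\otimes}$). Your extra verification that no further intersection points occur --- using that $0$ is a column of $B$ so the constant function appears in ${\cal T}_{B,\otimes}$, forcing $f_i(t)=0$ for any parametrized point of $\ker M$ --- is a sound addition that goes slightly beyond what the paper writes down.
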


Let $N = [{\cal T}_{A_{\rm ext}, \otimes}(t^{(1)}) ~ \cdots ~ {\cal T}_{A_{\rm ext}, \otimes}(t^{(\delta)})] \in \mathbb{C}^{|A_{\rm ext}|\times \delta}$ be the nullspace matrix of $M$ corresponding to the basis from Theorem \ref{thm:nullspacebasis}. Its rows are indexed by the columns of $A_{\rm ext}$. We consider the submatrix $N_{A_0}$ consisting of the rows indexed by $A_0 = m \cdot P_A \cap \mathbb{N}^m \subset A_{\rm ext}$.

For $i = 1, \ldots, m$, there is a unique matrix $C_i \in \mathbb{R}^{|A_0| \times |A_{\rm ext}|}$ satisfying the relation 
\[ t_i \cdot {\cal T}_{A_0, \otimes}(t) \, = \, C_i \cdot {\cal T}_{A_{\rm ext}, \otimes}(t). \]
To construct $C_i$ we use the recurrence \eqref{eq:chebpol1}. This translates into the following matrix relation: 
\begin{equation} \label{eq:EVrel0}
 N_{A_0} \cdot {\rm diag}(t_i^{(1)}, t_i^{(2)}, \ldots, t_i^{(\delta)}) \, = \, C_i \cdot N, \quad i = 1, \ldots, m.\end{equation}
 Another consequence of \cite[Theorem 4.4]{bender2022toric} is that, in the situation of Theorem \ref{thm:nullspacebasis}, we have ${\rm rank}(N_{A_0}) = \delta = m! \cdot {\rm vol}(P_A)$. Therefore, $N_{A_0}$ has a left pseudo-inverse $N_{A_0}^\dag$ such that $N_{A_0}^\dag N_{A_0}$ is the identity matrix of size $\delta$. Applying this from the left to \eqref{eq:EVrel0} we obtain 
 \[  {\rm diag}(t_i^{(1)}, t_i^{(2)}, \ldots, t_i^{(\delta)}) \, = \, N_{A_0}^\dag \cdot C_i \cdot N. \]
 An arbitrary kernel matrix for $M$ can be written as $\tilde{N} = N \cdot V$, where $V \in \mathbb{C}^{\delta \times \delta}$ is a square invertible matrix. In practice, we compute such a matrix $\tilde{N} = N \cdot V$ and its submatrix $\tilde{N}_{A_0} = N_{A_0} \cdot V$ for some $V$, using for instance the SVD. Equation \eqref{eq:EVrel0} can be written as
 \[ (N_{A_0} \cdot V) \cdot V^{-1} \cdot {\rm diag}(t_i^{(1)}, t_i^{(2)}, \ldots, t_i^{(\delta)}) \, = \, C_i \cdot (N \cdot V) \cdot V^{-1}  \]
 and this implies $V^{-1} \cdot {\rm diag}(t_i^{(1)}, t_i^{(2)}, \ldots, t_i^{(\delta)}) \cdot V \, = \, (N_{A_0} \cdot V)^\dag \cdot C_i \cdot (N \cdot V)$. We have now shown that the $i^{th}$ coordinates of the roots of $f_1 = \cdots = f_m = 0$ are the eigenvalues of $\tilde{N}_{A_0}^\dag \cdot C_i \cdot \tilde{N}$. Notice that the eigenvector of $t_i^{(j)}$ does not depend on the variable index $i$. This helps to group the eigenvalues of the $m$ matrices $\tilde{N}_{A_0}^\dag \cdot C_i \cdot \tilde{N}$ into coordinates of the roots $t^{(j)}$. For more information, see for instance \cite[p.~111]{telen2020solving}. An efficient implementation and analysis of this algorithm is left for future research. It would be especially interesting to understand its stability for root finding in the box $\square = [-1,1]^m$, under some assumptions on $f_i$. Here, we limit ourselves to presenting some results of a proof-of-concept implementation. 
\begin{example}\label{ex:tensor_curves_ev}
    Let $A$ be the matrix of all tuples $a_j$ of Euclidean degree $k = 30$ (see Remark \ref{rem:densechoices}). Using the Julia package \texttt{Oscar.jl} \cite{OSCAR}, we compute that $\delta = 2! \cdot {\rm vol}(P_A) = 1396$. We set up the system \eqref{eq:tensoreqs} with real coefficients $c_{i,j}, c_{i,0}$ drawn from a standard normal distribution, and use the outlined eigenvalue algorithm to solve it. The matrix $M$ has size $1560 \times 2953$. Its (numerical) rank is 1557. Among the 1396 complex (approximate) solutions, 382 are (approximately) real, and 338 are contained in the square $[-1,1]^2$, see Figure \ref{fig:eucldeg30}.
    \begin{figure}
        \centering
        \includegraphics[height = 5.7cm]{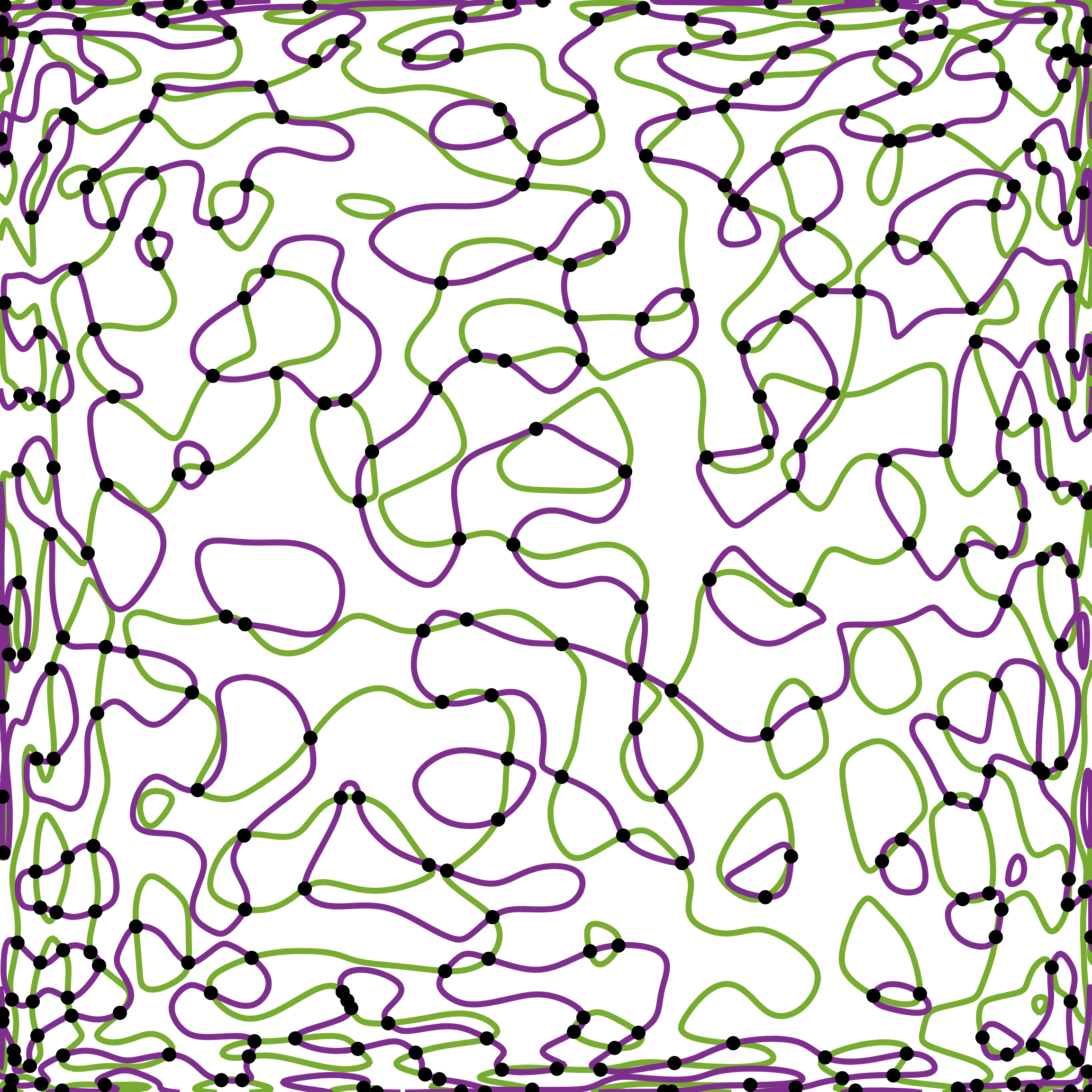}
        \caption{The curves from Example \ref{ex:tensor_curves_ev} in $[-1,1]^2$.}
        \label{fig:eucldeg30}
    \end{figure}
\end{example}

\subsection{Solving cosine equations} \label{sec:6-3}
We use the results from Section \ref{sec:5} to solve the system of equations
    \begin{equation}\label{eq:cos_system}
    c_{i,0} + \sum_{j=1}^n c_{i,j} \cos(a_j \cdot u)  \, = \, 0, \qquad i = 1, \ldots, m,
    \end{equation}
via homotopy continuation methods \cite{sommese2005numerical}. Changing coordinates $v = e^{\sqrt{-1}u}$, we obtain
    \begin{equation}\label{eq:cos_system_v}
    f_i(v; C, c_0) \, = \, c_{i,0} + \sum_{j=1}^n c_{i,j} \left ( \frac{v^{a_j} + v^{-a_j}}{2}\right ) \,= \,  0, \qquad i = 1, \ldots, m.
    \end{equation}
We interpret the coefficients $C, c_0$ as complex parameters, and apply the method of \emph{monodromy loops} to solve \eqref{eq:cos_system_v} for specific values $C_1 \in \mathbb{R}^{m \times n}, c_{01} \in \mathbb{R}^m$ of these parameters \cite{duff2018monodromy}. We explain how this works in a nutshell. Pick a random $v_0 \in (\mathbb{C} \setminus \{0\})^m$, and find parameters $C_0 \in \mathbb{C}^{m,n}, c_{00} \in \mathbb{C}^m$ for which $f_i(v_0; C_0, c_{00}) = 0, i = 1, \ldots, m$. This is done by solving a linear system of equations. Next, use monodromy loops in the space of parameters $C, c_0$ to compute all solutions of $f_i(v; C_0, c_{00}) = 0, i = 1, \ldots, m$. Since we are interested in the solutions for the parameter values $C_1, c_{01}$, we set up the homotopy
\begin{equation} \label{eq:homotopy} f_i \left ( \, v \, ; \, C_0\tau + C_1(1-\tau), \, c_{00}\tau  + c_{01}(1-\tau) \right ) = 0, \quad i = 1, \ldots, m, \quad  \tau \in [0,1]. \end{equation}
We track the solutions from $\tau = 0$ to $\tau = 1$. When implementing this algorithm, it is important to make use of the fact that solutions come in orbits of a group action. Namely, for each solution $v$, the coordinatewise inverse $v^{-1}$ is also a solution. Also, it is essential that we know when to stop the monodromy procedure: the number of solution orbits for generic parameters $C_0, c_{00}$ is predicted by Theorem \ref{thm:degCos}. 
We illustrate this with an example.

\begin{example}[$m = 2, n = 6$]\label{ex:cos_eqns}
    Let $A = \left[ \begin{smallmatrix}
     4 & 4 & 6 & 7 & 9 & 2 \\
     8 & 4 & 1 & 2 & 6 & 7
    \end{smallmatrix} \right]$, $C_1 = \left[ \begin{smallmatrix}
     1 & 2 & 3 & 5 & -1 & -7 \\
     -2 & -6 & 5 & -3 & 1 & 4
    \end{smallmatrix} \right]$ and $c_{01} = \left[ \begin{smallmatrix}
        4\\ -2
    \end{smallmatrix} \right ]$.
    From $A$, we construct the cosine Chebyshev surface $\Xcos\subset \mathbb{C}^6$. According to Theorem \ref{thm:degCos}, its degree is $129$. This is the area of $P_{A,\cos}$, which is the octagon shown in Figure \ref{fig:cos_system}, left. Hence, if $C_0 \in \mathbb{C}^{2\times 7}$ and $c_{00} \in \mathbb{C}^2$ are sufficiently generic, the number of solution pairs $(v,v^{-1})$ to the system \eqref{eq:cos_system} is $129$. We compute these with the following Julia code snippet:

    \footnotesize
\begin{lstlisting}[columns=fullflexible]
using HomotopyContinuation, Oscar   # load packages
A = [4 4 6 7 9 2; 8 4 1 2 6 7]; m,n = size(A)
C1 = [1 2 3 5 -1 -7; -2 -6 5 -3 1 4]; c01 = [4, -2];    # target parameters
P = convex_hull(transpose([A -A]))      # compute the polytope P_{A,cos}
deg = factorial(m)*volume(P)/2        # degree formula from Theorem 5.3
@var v[1:m] C[1:m,1:n] c0[1:m]  # declare variables and parameters
eqs = C*[1/2*(prod(v.^A[:,j])+prod(v.^(-A[:,j]))) for j = 1:n]+c0
F = System(eqs, variables = v, parameters = [C[:];c0])
act(p) = [pp^(-1) for pp in p] # coordinate-wise inverse acts on solutions
# The following command uses monodromy loops to solve for generic parameters C0, c00 
monres = monodromy_solve(F, group_action = act; target_solutions_count = Int(deg)) 
# The next line sets up the homotopy in Eq. (25) and tracks from tau = 0 to tau = 1.
R = HomotopyContinuation.solve(F,solutions(monres); 
     start_parameters = parameters(monres), target_parameters = [C1[:];c01])
u_solutions = [-im*log.(vv) for vv in solutions(R)] # from v to u coords
\end{lstlisting}
\normalsize
On a MacBook Pro machine with a Quad-Core Intel Core i7 processor running at 2.8 GHz, it takes about 0.2 seconds to run this code, after loading packages and after one compile run. We use Julia v1.9.1, \texttt{HomotopyContinuation.jl} v2.9.2 \cite{HomotopyContinuation.jl}, and \texttt{Oscar.jl} v0.13.0 \cite{OSCAR}.

Among the $129$ pairs of solutions $(v,v^{-1})$, $5$ are real. After changing coordinates, \texttt{u\_solutions} contains $64$ pairs $(u,-u)$ of real solutions (modulo $2\pi$) and $65$ pairs of complex solutions (modulo $2\pi$). The real solutions are the $128$ black dots displayed in Figure \ref{fig:cos_system}, right.
The green and purple curves in Figure \ref{fig:cos_system} are given by the equations in \eqref{eq:cos_system}.
\begin{figure}[ht]
    \centering
    \begin{tikzpicture}[scale=1.2]
\begin{axis}[%
width=1.9in,
height=1.9in,
scale only axis,
xmin=-10,
xmax=10,
ymin=-10,
ymax=10,
ticks = none, 
ticks = none,
axis background/.style={fill=white},
axis line style={draw=none} 
]

\draw[->,>=stealth] (axis cs:0,0) -- (axis cs:9.7,0);
\draw[->,>=stealth] (axis cs:0,0) -- (axis cs:0,8.7);

\addplot[only marks,mark=*,mark size=0.5pt,black
        ]  coordinates {
(-9,-8) (-9,-7) (-9,-6) (-9,-5) (-9,-4) (-9,-3) (-9,-2) (-9,-1) (-9,0)
(-9,8) (-9,7) (-9,6) (-9,5) (-9,4) (-9,3) (-9,2) (-9,1)
(-8,-8) (-8,-7) (-8,-6) (-8,-5) (-8,-4) (-8,-3) (-8,-2) (-8,-1) (-8,0) (-8,8) (-8,7) (-8,6) (-8,5) (-8,4) (-8,3) (-8,2) (-8,1)
(-7,-8) (-7,-7) (-7,-6) (-7,-5) (-7,-4) (-7,-3) (-7,-2) (-7,-1) (-7,0) (-7,8) (-7,7) (-7,6) (-7,5) (-7,4) (-7,3) (-7,2) (-7,1)
(-6,-8) (-6,-7) (-6,-6) (-6,-5) (-6,-4) (-6,-3) (-6,-2) (-6,-1) (-6,0) (-6,8) (-6,7) (-6,6) (-6,5) (-6,4) (-6,3) (-6,2) (-6,1)
(-5,-8) (-5,-7) (-5,-6) (-5,-5) (-5,-4) (-5,-3) (-5,-2) (-5,-1) (-5,0) (-5,8) (-5,7) (-5,6) (-5,5) (-5,4) (-5,3) (-5,2) (-5,1)
(-4,-8) (-4,-7) (-4,-6) (-4,-5) (-4,-4) (-4,-3) (-4,-2) (-4,-1) (-4,0) (-4,8) (-4,7) (-4,6) (-4,5) (-4,4) (-4,3) (-4,2) (-4,1)
(-3,-8) (-3,-7) (-3,-6) (-3,-5) (-3,-4) (-3,-3) (-3,-2) (-3,-1) (-3,0) (-3,8) (-3,7) (-3,6) (-3,5) (-3,4) (-3,3) (-3,2) (-3,1)
(-2,-8) (-2,-7) (-2,-6) (-2,-5) (-2,-4) (-2,-3) (-2,-2) (-2,-1) (-2,0) (-2,8) (-2,7) (-2,6) (-2,5) (-2,4) (-2,3) (-2,2) (-2,1)
(-1,-8) (-1,-7) (-1,-6) (-1,-5) (-1,-4) (-1,-3) (-1,-2) (-1,-1) (-1,0) (-1,8) (-1,7) (-1,6) (-1,5) (-1,4) (-1,3) (-1,2) (-1,1)
(0,-8) (0,-7) (0,-6) (0,-5) (0,-4) (0,-3) (0,-2) (0,-1) (0,0) (0,8) (0,7) (0,6) (0,5) (0,4) (0,3) (0,2) (0,1)
(1,-8) (1,-7) (1,-6) (1,-5) (1,-4) (1,-3) (1,-2) (1,-1) (1,0) (1,8) (1,7) (1,6) (1,5) (1,4) (1,3) (1,2) (1,1)
(2,-8) (2,-7) (2,-6) (2,-5) (2,-4) (2,-3) (2,-2) (2,-1) (2,0) (2,8) (2,7) (2,6) (2,5) (2,4) (2,3) (2,2) (2,1)
(3,-8) (3,-7) (3,-6) (3,-5) (3,-4) (3,-3) (3,-2) (3,-1) (3,0) (3,8) (3,7) (3,6) (3,5) (3,4) (3,3) (3,2) (3,1)
(4,-8) (4,-7) (4,-6) (4,-5) (4,-4) (4,-3) (4,-2) (4,-1) (4,0) (4,8) (4,7) (4,6) (4,5) (4,4) (4,3) (4,2) (4,1)
(5,-8) (5,-7) (5,-6) (5,-5) (5,-4) (5,-3) (5,-2) (5,-1) (5,0) (5,8) (5,7) (5,6) (5,5) (5,4) (5,3) (5,2) (5,1)
(6,-8) (6,-7) (6,-6) (6,-5) (6,-4) (6,-3) (6,-2) (6,-1) (6,0) (6,8) (6,7) (6,6) (6,5) (6,4) (6,3) (6,2) (6,1)
(7,-8) (7,-7) (7,-6) (7,-5) (7,-4) (7,-3) (7,-2) (7,-1) (7,0) (7,8) (7,7) (7,6) (7,5) (7,4) (7,3) (7,2) (7,1)
(8,-8) (8,-7) (8,-6) (8,-5) (8,-4) (8,-3) (8,-2) (8,-1) (8,0) (8,8) (8,7) (8,6) (8,5) (8,4) (8,3) (8,2) (8,1)
(9,-8) (9,-7) (9,-6) (9,-5) (9,-4) (9,-3) (9,-2) (9,-1) (9,0) (9,8) (9,7) (9,6) (9,5) (9,4) (9,3) (9,2) (9,1)
};

\addplot [color=myblue,solid,fill opacity=0.2,fill = myblue,forget plot]
  table[row sep=crcr]{%
7 2\\
9 6\\
4 8\\
2 7\\
-7 -2\\
-9 -6\\
-4 -8\\
-2 -7\\
7 2\\
};

\addplot[only marks,mark=*,mark size=2.1pt,myblue
        ]  coordinates {
  (7, 2) 
(9, 6)
(4, 8)
(2, 7)
(-7, -2)
(-9, -6)
(-4, -8)
(-2, -7)
};

\end{axis}
\end{tikzpicture} 
    \quad \quad \quad 
    \includegraphics[height = 5.7cm]{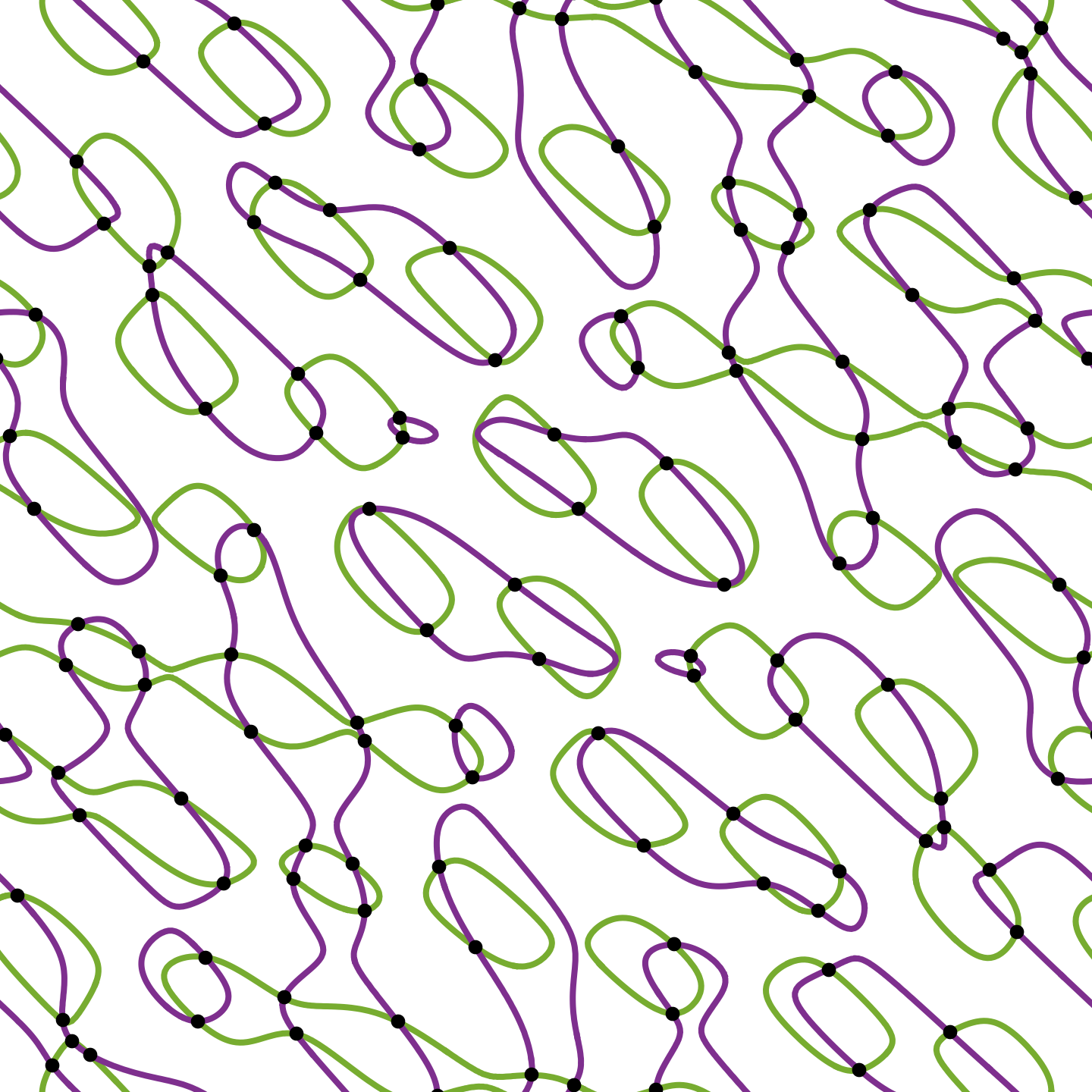}
    \caption{Left: the octagon $P_{A,\cos}$. Right: the curves from Example \ref{ex:cos_eqns} in $[0,2\pi]^2$.}
    \label{fig:cos_system}
\end{figure}
\end{example}

\subsection{Root systems} \label{sec:6-4}

Finally, we consider yet another notion of multivariate Chebyshev functions. We briefly present the construction, which uses root systems and Weyl groups. The details can be found in \cite{hubert2022sparse,ryland2010multivariate}. We have seen that univariate Chebyshev polynomials can be characterized~by 
\[ 
T_{k} \left (\frac{x + x^{-1}}{2} \right ) \,  = \,  \frac{x^k + x^{-k}}{2} \quad \text{for} \quad x \neq 0. 
\]
Consider the group of symmetries $\mathcal{W} = \{[-1], [1]\}$ on $\mathbb{R}$. We define an action on Laurent monomials by $[-1] \cdot x^k = x^{-k}$ and $[1] \cdot x^k = x^k$. By linearity, this defines an action on Laurent polynomials $\mathbb{Z}[x^{\pm 1}]$. 
One can prove that the ring $\mathbb{Z}[x^{\pm 1}]^{ \mathcal{W}}$ of $\mathcal{W}$-invariants is generated by the polynomial $\Theta_\omega = x + x^{-1}$, i.e., $\mathbb{Z}[x^{\pm 1}]^{ \mathcal{W}} = \mathbb{Z}[x + x^{-1}]$. The \textit{orbit polynomial} $\Theta_k(x) = 
x^k + x^{-k}$ is invariant under the action of $\mathcal{W}$ and can thus be written as $\Tilde{T}_k(x+x^{-1})$, where $\Tilde{T}_k \in \mathbb{Z}[t]$. 
The Chebyshev polynomial $T_k$ can be recovered as $T_k(t) = \frac{1}{2}  \Tilde{T}_k(2t)$. 

We now generalize this definition to multivariate polynomials following \cite{hubert2022sparse,ryland2010multivariate}. 
A root system $\Phi \subset \mathbb{R}^m$ defines a \emph{weight lattice} $\Gamma = \mathbb{Z}\omega_1 \oplus \ldots \oplus \mathbb{Z} \omega_m $, invariant under the action of its \emph{Weyl group} $\mathcal{W}$. We identify $\Gamma$ with $\mathbb{Z}^m$, and ${\cal W}$ can be seen as a subgroup of $GL_m(\mathbb{Z})$. The orbit polynomials $\Theta_{\omega_1}, \ldots, \Theta_{\omega_m}$, where $\Theta_{\omega_i} = \sum_{B \in \mathcal{W}} x^{B \omega_i}$, are algebraically independent and generate the ring of $\mathcal{W}$-invariant Laurent polynomials, i.e., $\mathbb{Z}[x^{\pm 1}]^{\mathcal{W}}=\mathbb{Z}[\Theta_{\omega_1},\ldots,\Theta_{\omega_m}]
$, see \cite[Chapitre VI, §3.3 Th\'eor\`eme 1]{bourbakiroots}. For $\alpha \in \mathbb{N}^m$, the orbit polynomial
\[ 
\Theta_{\alpha} \, = \, \sum_{B \in \mathcal{W}} x^{B \alpha}
\]
is $\mathcal{W}$-invariant. Therefore, there exists a unique polynomial ${\cal T}_{\alpha}~\in~\mathbb{Z}[t_1,\ldots,t_m]$ such that $\Theta_\alpha = {\cal T}_{\alpha}(\Theta_{
\omega_1}, \ldots, \Theta_{\omega_m})$, and it is defined to be the $\alpha^{th}$ generalized Chebyshev polynomial. Note, however, that the Weyl group $\mathcal{W} = \{[-1], [1]\}$ only leads to the classical Chebyshev polynomial up to normalization. 
To recover the standard definition in the univariate case, one can use normalized orbit polynomials instead. These are given by $\frac{1}{|{\cal W}|}\sum_{B \in {\cal W}} x^{B\alpha}$.
We choose not to include the normalizing factor in our definition to be coherent with \cite{hubert2022sparse}.

\begin{example}[$m = 2$]\label{ex:root_systems}
    Consider the root system ${\cal A}_2 = \{\pm [2, -1]^\top, \pm [-1, 2]^\top, \pm [1, 1]^\top  \}$. The basis of $\Gamma$ can be chosen to be $\omega_1 = [1, 0]^\top$ and $\omega_2 = [0, 1]^\top$. The Weyl group is a subgroup of $GL_2(\mathbb{Z})$ of order $6$. Its elements are displayed in \cite[Example 2.9]{hubert2022sparse}. The generalized Chebyshev polynomials satisfy the recurrence relations
    \begin{gather*}
        {\cal T}_{0,0} = 6, \quad {\cal T}_{1,0} = x, \quad {\cal T}_{0,1} = y, \quad {\cal T}_{1,1} = \frac{1}{4} xy - 3, \\
        {\cal T}_{a+2,0} = \frac{1}{2} \,x\, {\cal T}_{a+1,0} - 2 {\cal T}_{a,1}, \quad {\cal T}_{0,b+2} = \frac{1}{2} \,y\, {\cal T}_{0,b+1} - 2 {\cal T}_{1,b}, \\
        {\cal T}_{a+1,b} = \frac{1}{2} \,x\, {\cal T}_{a,b} - {\cal T}_{a-1,b+1} - {\cal T}_{a,b-1}, \quad {\cal T}_{a,b+1} = \frac{1}{2} \,y\, {\cal T}_{a,b} - {\cal T}_{a+1,b-1} - {\cal T}_{a-1,b}.
    \end{gather*}
    We compute the surface defined by the matrix $A = \left[ \begin{smallmatrix}
        1 & 1 & 2 \\
        2 & 1 & 3
    \end{smallmatrix}\right]$ from our running example with this choice of multivariate Chebyshev polynomials. The parametrization sends $(t_1, t_2)$ to 
    \[
    \left( \frac{1}{8}(  t_2^2 t_1 - 4 t_1^2-4 t_2), \frac{1}{4} (t_1 t_2 - 12), \frac{1}{128}(-9 t_1^3 t_2+4 t_1^2 t_2^3+96 t_1^2+36 t_1 t_2^2-t_2^4-268 t_2) \right).
    \]
    The Zariski closure of the image
    is an irreducible surface of degree $11$, defined by
    \begin{gather*}
    686 y^{11}+1296 x^3 y^7
     + \,  \ldots \, -\,65536 z^3+4622782052 y^2+2511433848 y+600952464 \, = \,  0.\qedhere
    \end{gather*}
\end{example}

\section*{Acknowledgements}
We would like to thank Evelyne Hubert, Yuji Nakatsukasa, Vanni Noferini, Frank Sottile, Alex Townsend and Nick Trefethen for useful discussions. 

\small
\bibliographystyle{abbrv}
\bibliography{references.bib}

\begin{thebibliography}{10}

\bibitem{bender2022toric}
M.~Bender and S.~Telen.
\newblock Toric eigenvalue methods for solving sparse polynomial systems.
\newblock {\em Mathematics of Computation}, 91(337):2397--2429, 2022.

\bibitem{bos2006bivariate}
L.~Bos, M.~Caliari, S.~De~Marchi, M.~Vianello, and Y.~Xu.
\newblock Bivariate {L}agrange interpolation at the {P}adua points: the
  generating curve approach.
\newblock {\em Journal of Approximation Theory}, 143(1):15--25, 2006.

\bibitem{bourbakiroots}
N.~Bourbaki.
\newblock {É}l{\'e}ments de math{\'e}matique. {F}asc. {XXXIV}. {C}hapitre
  {VI}: Syst{\`e}mes de racines.
\newblock {\em Actualit{\'e}s Scientifiques et Industrielles 1337}, 1975.

\bibitem{boyd2013finding}
J.~P. Boyd.
\newblock Finding the zeros of a univariate equation: proxy rootfinders,
  {C}hebyshev interpolation, and the companion matrix.
\newblock {\em SIAM review}, 55(2):375--396, 2013.

\bibitem{HomotopyContinuation.jl}
P.~Breiding and S.~Timme.
\newblock {H}omotopy{C}ontinuation.jl: {A} {P}ackage for {H}omotopy
  {C}ontinuation in {J}ulia.
\newblock In {\em International Congress on Mathematical Software}, pages
  458--465. Springer, 2018.

\bibitem{cox2013ideals}
D.~A. Cox, J.~B. Little, and D.~O'Shea.
\newblock {\em Ideals, varieties, and algorithms: an introduction to
  computational algebraic geometry and commutative algebra}.
\newblock Springer Science \& Business Media, 2018.
\newblock Corrected fourth edition.

\bibitem{cox2011toric}
D.~A. Cox, J.~B. Little, and H.~K. Schenck.
\newblock {\em Toric varieties}, volume 124.
\newblock American Mathematical Soc., 2011.

\bibitem{day2005roots}
D.~Day and L.~Romero.
\newblock Roots of polynomials expressed in terms of orthogonal polynomials.
\newblock {\em SIAM journal on numerical analysis}, 43(5):1969--1987, 2005.

\bibitem{dreesen2012back}
P.~Dreesen, K.~Batselier, and B.~De~Moor.
\newblock Back to the roots: Polynomial system solving, linear algebra, systems
  theory.
\newblock {\em IFAC Proceedings Volumes}, 45(16):1203--1208, 2012.

\bibitem{duff2018monodromy}
T.~Duff, C.~Hill, A.~Jensen, K.~Lee, A.~Leykin, and J.~Sommars.
\newblock {Solving polynomial systems via homotopy continuation and monodromy}.
\newblock {\em IMA Journal of Numerical Analysis}, 39(3):1421--1446, 2018.

\bibitem{erb2016bivariate}
W.~Erb, C.~Kaethner, M.~Ahlborg, and T.~M. Buzug.
\newblock Bivariate {L}agrange interpolation at the node points of
  non-degenerate {L}issajous curves.
\newblock {\em Numerische Mathematik}, 133:685--705, 2016.

\bibitem{freudenburg2009curves}
G.~Freudenburg and J.~Freudenburg.
\newblock Curves defined by {C}hebyshev polynomials.
\newblock {\em arXiv:0902.3440}, 2009.

\bibitem{M2}
D.~R. Grayson and M.~E. Stillman.
\newblock Macaulay2, a software system for research in algebraic geometry.
\newblock Available at \url{http://www2.macaulay2.com}.

\bibitem{greenslade1993all}
T.~B. Greenslade~Jr.
\newblock All about {L}issajous figures.
\newblock {\em The Physics Teacher}, 31(6):364--370, 1993.

\bibitem{hubert2022sparse}
E.~Hubert and M.~F. Singer.
\newblock Sparse interpolation in terms of multivariate {C}hebyshev
  polynomials.
\newblock {\em Foundations of Computational Mathematics}, 22(6):1801--1862,
  2022.

\bibitem{michalek2021invitation}
M.~Michalek and B.~Sturmfels.
\newblock {\em Invitation to nonlinear algebra}, volume 211.
\newblock American Mathematical Soc., 2021.

\bibitem{mourrain2009subdivision}
B.~Mourrain and J.~P. Pavone.
\newblock Subdivision methods for solving polynomial equations.
\newblock {\em Journal of Symbolic Computation}, 44(3):292--306, 2009.

\bibitem{mourrain2021truncated}
B.~Mourrain, S.~Telen, and M.~Van~Barel.
\newblock Truncated normal forms for solving polynomial systems: generalized
  and efficient algorithms.
\newblock {\em Journal of Symbolic Computation}, 102:63--85, 2021.

\bibitem{nakatsukasa2015computing}
Y.~Nakatsukasa, V.~Noferini, and A.~Townsend.
\newblock Computing the common zeros of two bivariate functions via
  {B}{\'e}zout resultants.
\newblock {\em Numerische Mathematik}, 129(1):181--209, 2015.

\bibitem{noferini2017chebyshev}
V.~Noferini and J.~P{\'e}rez.
\newblock Chebyshev rootfinding via computing eigenvalues of colleague
  matrices: when is it stable?
\newblock {\em Mathematics of Computation}, 86(306):1741--1767, 2017.

\bibitem{OSCAR}
Oscar -- open source computer algebra research system, version 0.13.0, 2023.

\bibitem{parkinson2024chebyshev}
E.~Parkinson, K.~Wall, J.~Slagle, D.~Treuhaft, X.~de~la Bruere, S.~Goldrup,
  T.~Keith, P.~Call, and T.~J. Jarvis.
\newblock Chebyshev subdivision and reduction methods for solving multivariable
  systems of equations.
\newblock {\em arXiv:2401.02114}, 2024.

\bibitem{parkinson2022analysis}
S.~Parkinson, H.~Ringer, K.~Wall, E.~Parkinson, L.~Erekson, D.~Christensen, and
  T.~J. Jarvis.
\newblock Analysis of normal-form algorithms for solving systems of polynomial
  equations.
\newblock {\em Journal of Computational and Applied Mathematics}, 411:114235,
  2022.

\bibitem{ryland2010multivariate}
B.~N. Ryland and H.~Z. Munthe-Kaas.
\newblock On multivariate {C}hebyshev polynomials and spectral approximations
  on triangles.
\newblock In {\em Spectral and High Order Methods for Partial Differential
  Equations: Selected papers from the ICOSAHOM'09 conference, June 22-26,
  Trondheim, Norway}, pages 19--41. Springer, 2010.

\bibitem{sommese2005numerical}
A.~J. Sommese and C.~W. Wampler.
\newblock {\em The Numerical Solution of Systems of Polynomials Arising in
  Engineering and Science}.
\newblock World Scientific, 2005.

\bibitem{sorber2014numerical}
L.~Sorber, M.~V. Barel, and L.~D. Lathauwer.
\newblock Numerical solution of bivariate and polyanalytic polynomial systems.
\newblock {\em SIAM Journal on Numerical Analysis}, 52(4):1551--1572, 2014.

\bibitem{telen2020solving}
S.~Telen.
\newblock {\em Solving systems of polynomial equations}.
\newblock PhD thesis, KU Leuven (Belgium), 2020.

\bibitem{telen2022introduction}
S.~Telen.
\newblock Introduction to toric geometry.
\newblock {\em arXiv:2203.01690}, 2022.

\bibitem{timme2021numerical}
S.~Timme.
\newblock {\em Numerical nonlinear algebra}.
\newblock PhD thesis, Technische Universit\"at Berlin (Germany), 2021.

\bibitem{trefethen2017cubature}
L.~N. Trefethen.
\newblock Cubature, approximation, and isotropy in the hypercube.
\newblock {\em SIAM Review}, 59(3):469--491, 2017.

\bibitem{trefethen2019approximation}
L.~N. Trefethen.
\newblock {\em Approximation Theory and Approximation Practice, Extended
  Edition}.
\newblock SIAM, 2019.

\end{thebibliography}

\noindent{\bf Authors' addresses:}
\medskip

\noindent Za\"ineb Bel-Afia, Oxford University \hfill{\tt belafia@maths.ox.ac.uk}

\noindent Chiara Meroni, Harvard University \hfill{\tt cmeroni@seas.harvard.edu}

\noindent Simon Telen, MPI-MiS Leipzig
\hfill {\tt simon.telen@mis.mpg.de}
\end{document}